\theoremstyle{plain}
\newtheorem{Theorem}{Theorem}[section]
\newtheorem{Lemma}[Theorem]{Lemma}
\newtheorem{Corollary}[Theorem]{Corollary}
\theoremstyle{definition}
\newtheorem{Definition}[Theorem]{Definition}
\newtheorem{Example}[Theorem]{Example}
\newtheorem{Remark}[Theorem]{Remark}
\newcommand{\bbC}{\mathbb{C}}
\newcommand{\bbZ}{\mathbb{Z}}
\newcommand{\bbN}{\mathbb{N}}
\newcommand{\bbQ}{\mathbb{Q}}
\newcommand{\bbG}{\mathbb{G}}
\newcommand{\bQ}{\mathbf{Q}}
\newcommand{\bR}{\mathbf{R}}
\newcommand{\bS}{\mathbf{S}}
\newcommand{\bT}{\mathbf{T}}
\newcommand{\scS}{\mathscr{S}}
\newcommand{\cA}{\mathcal{A}}
\newcommand{\cB}{\mathcal{B}}
\newcommand{\cO}{\mathcal{O}}
\newcommand{\cW}{\mathcal{W}}
\newcommand{\cV}{\mathcal{V}}
\newcommand{\cM}{\mathcal{M}}
\newcommand{\fg}{\mathfrak{g}}
\newcommand{\fD}{\mathfrak{D}}
\newcommand{\fS}{\mathfrak{S}}
\newcommand{\xto}[1]{\xrightarrow{#1}}
\newcommand{\injto}{\hookrightarrow}
\newcommand{\Hom}{\operatorname{Hom}}
\newcommand{\Span}{\operatorname{Span}}
\renewcommand{\Im}{\operatorname{Im}}
\newcommand{\GL}{\operatorname{GL}}
\newcommand{\SL}{\operatorname{SL}}
\newcommand{\SO}{\operatorname{SO}}
\newcommand{\fgl}{\mathfrak{gl}}
\newcommand{\fsl}{\mathfrak{sl}}
\newcommand{\wt}{\operatorname{wt}}
\newcommand{\ch}{\operatorname{ch}}
\newcommand{\Supp}{\operatorname{Supp}}
\newcommand{\flag}{\text{flag}}
\newcommand{\parity}{\operatorname{parity}}
\newcommand{\hw}{\mathrm{h.w.}}
\newcommand{\Gr}{\operatorname{\mathsf{Gr}}}
\newcommand{\innprod}[1]{\langle #1 \rangle}
\newcommand{\abs}[1]{\left\lvert #1 \right\rvert}
\newcommand{\integrable}{\mathrm{int}}
\newcommand{\ptimes}{\dot{\times}}
\newcommand{\up}{\operatorname{\mathsf{up}}}
\newcommand{\down}{\operatorname{\mathsf{down}}}
\newcommand{\chardet}{\operatorname{\det}}
\newcommand{\Part}{\operatorname{Part}}
\definecolor{darkred}{rgb}{0.7,0,0} 
\newcommand{\defn}[1]{{\color{darkred}\emph{#1}}} 
\begin{document}

\title{A Demazure Character Formula for the Product Monomial Crystal}
\author{Joel Gibson}
\email{joel.gibson@sydney.edu.au}
\date{\today}

\begin{abstract}
	The product monomial crystal was defined by Kamnitzer, Tingley, Webster, Weekes, and Yacobi for any semisimple simply-laced Lie algebra $\fg$, and depends on a collection of parameters $\bR$. We show that a family of truncations of this crystal are Demazure crystals, and give a Demazure-type formula for the character of each truncation, and the crystal itself. This character formula shows that the product monomial crystal is the crystal of a generalised Demazure module, as defined by Lakshmibai, Littelmann and Magyar. In type $A$, we show the product monomial crystal is the crystal of a generalised Schur module associated to a column-convex diagram depending on $\bR$.
\end{abstract}

\maketitle

\tableofcontents

\section{Introduction}

Let $G$ be a semisimple simply-laced algebraic group over $\bbC$, for example $\SL_n$ (type $A_{n-1}$), $\SO(2n)$ (type $D_n$), or one of the exceptional types $E_6, E_7, E_8$.
The aim of this paper is to study a family of finite dimensional representations of $G$ which appear in the study of slices to Schubert varieties in the affine Grassmannian.
Our main theorem provides a Demazure-type character formula for these representations.
In type $A$ we show that these representations are related to generalised Schur modules, and give an explicit realisation for the crystal of a generalised Schur module.
We now discuss our motivations and results in more detail.

Our first motivation comes from the representation theory of algebras which quantise slices to Schubert varieties in the affine Grassmannian.
Let $G$ be a reductive algebraic group with Langlands dual $G^\vee$.
The affine Grassmannian $\Gr$ of $G^\vee$ is a Poisson ind-variety which plays an important role in geometric representation theory (e.g. \cite{mirkovicGeometricLanglandsDuality2007}).
$\Gr$ is stratified by spherical orbits $\Gr=\bigsqcup_{\lambda\in P^+} \Gr^\lambda$, where $\lambda$ ranges over the dominant weights $P^+$ of $G$. The closure relation on these strata is given by the positive root ordering on $P^+$: $\overline{\Gr^\lambda}=\bigcup_{\mu\leq\lambda}\Gr^\mu$. Fixing a pair $\mu \leq  \lambda$ in $X^+$, consider the \emph{transversal slice} $\overline{\cW}_\mu^{\lambda}$ to $\Gr^\mu$ in $\overline{\Gr^\lambda}$. These slices inherit a Poisson structure, and under the geometric Satake correspondence they (non-canonically) geometrise weight spaces of the irreducible representation of $\fg$ \cite{bravermanPursuingDoubleAffine2010}.  

In \cite{kamnitzerYangiansQuantizationsSlices2014} the authors initiate a program to construct quantisations of $\overline{\cW}_\mu^{\lambda}$, and study the representation theory of the resulting algebras. These algebras are called \emph{truncated shifted Yangians} and denoted $Y_\mu^\lambda(\bR)$.  Here 
$\bR \in \Pi_{i \in I}$
$\mathbb{C}^{\lambda_i}/S_{\lambda_i}$
is a deformation parameter,
where $\lambda_i=\langle \lambda, \alpha_i^\vee \rangle$.  

Modules over truncated shifted Yangians naturally afford a highest weight theory, leading to a category $\cO(Y_\mu^\lambda(\bR))$, which is the ``algebraic category $\cO$'' in the sense of \cite{bradenQuantizationsConicalSymplectic2014}. This category plays an important role in the ``symplectic duality'' program of Braden-Licata-Proudfoot-Webster.  In \cite{kamnitzerCategoryAffineGrassmannian2019} $\cO(Y_\mu^\lambda(\bR))$ was recently used to prove the categorical symplectic duality between Nakajima quiver varieties and the slices in the affine Grassmannian.

The representation theory of    $Y_\mu^\lambda(\bR)$ easily reduces to the case where $\bR$ consists of integers satisfying certain parity conditions (see \cref{section:definition-of-both-crystals} below). It is conjectured in \cite{kamnitzerHighestWeightsTruncated2019}, and proven in \cite{kamnitzerCategoryAffineGrassmannian2019}, that the sum $\cV(\bR)=\bigoplus_\mu  \cO(Y_\mu^\lambda(\bR))$ carries a categorical $\fg$-action in the sense of Khovanov-Lauda and Rouquier \cite{khovanovDiagrammaticApproachCategorification2009,rouquier2KacMoodyAlgebras2008,khovanovDiagrammaticApproachCategorification2011}.  Therefore the (complexified) Grothendieck group of this category is a representation of $\fg$.  This representation is our main object of study: $$V(\bR)=K^\mathbb{C}(\cV(\bR)).$$  

While it is known that for generic (respectively singular) parameters $V(\bR)$ is isomorphic to a tensor product of fundamental representations (respectively a single irreducible representation), the representation $V(\bR)$ in general is quite mysterious. Let $B(\bR)$ be the crystal of $V(\bR)$, which is called the \emph{product monomial crystal} due to its realisation as a subcrystal of Nakajimas's monomial crytal \cite{kamnitzerHighestWeightsTruncated2019}. Our first main result provides 
an explicit character formula for the crystal $B(\bR)$, in terms of multiplications by dominant weights and application of isobaric Demazure operators (\cref{theorem:inductive-character-formula}).

Our proof relies on defining \textit{truncations} of the crystal $B(\bR)$, which are certain subsets of $B(\bR)$ (\cref{section:definition-truncation}). Each of these truncations is described globally, however we show that ``nearby'' truncations are related via a crystal-instrisic operation, the \textit{extension of strings}. Determining a path of nearby truncations from the smallest truncation to the largest gives a Demazure character formula (\cref{sec:character-formula}). We also show that each truncation is a disjoint sum of Demazure crystals (\cref{section:truncations-are-demazure}), which is not obvious from the global description of the truncation.

As a consequence of the categorification described above, the elements of weight $\mu$ in $B(\bR)$ are in bijection with the simple highest weight modules of $Y_\mu^\lambda(\bR)$.  In fact the crystal structure provides even more refined data, for example the highest-weight elements of weight $\mu$ in the crystal correspond precisely to the \emph{finite dimensional} simple $Y_\mu^\lambda(\bR)$-modules \cite[Proposition 3.17]{kamnitzerHighestWeightsTruncated2019}.
Thus, our character formula (\cref{theorem:inductive-character-formula}) provides combinatorial information about the representation theory of $Y_\mu^\lambda(\bR)$.

Our second motivation comes from the study of generalised Schur modules. Recall that for each partition $\lambda$, there is an endofunctor on vector spaces called the \textit{Schur functor} $\scS_\lambda$, and when $V$ is the basic representation of $\GL(V)$ over $
\mathbb{C}$, the representation $\scS_\lambda(V)$ is irreducible, of highest weight $\lambda$. The modules $\scS_\lambda(V)$ are well-studied and are called \textit{Schur modules}.

The partition $\lambda$ can be thought of as a Young diagram, a configuration of boxes in the plane, or a finite subset of $\bbN \times \bbN$ of a special form. In fact, to \textit{any} finite subset $D \subseteq \bbN \times \bbN$, called a \textit{diagram}, we can associate a generalised Schur functor $\scS_D$, and hence a generalised Schur module $\scS_D(V)$. Considerably less is known about these functors in complete generality, though there have been some combinatorics developed for the class of ``percentage-avoiding'' diagrams \cite{magyarBorelWeilTheorem1998,reinerPercentageAvoidingNorthwestShapes1998}.

Our second main result describes the crystal of $\scS_D$ in the case where $D$ is \textit{column-convex} (see \cref{remark:column-convex}). More precisely, we associate a parameter $\bR$ to $D$ and prove that $B(\bR)$ is the crystal of $\scS_D$ (\cref{theorem:crystal-of-schur-module}). This provides a model for the crystal of $\scS_D$ in terms of Nakajima monomials (the other known model for this crystal is due to Laksmibai, Littelmann, and Magyar \cite{lakshmibaiStandardMonomialTheory2002} using Demazure operators).

To prove \cref{theorem:crystal-of-schur-module}, we compare our character formula (\cref{theorem:inductive-character-formula}) for $B(\bR)$ in type $A_n$ to a character formula for $\scS_D(\bbC^{n + 1})$ given by Reiner and Shimozono \cite[Theorem 23]{reinerPercentageAvoidingNorthwestShapes1998}, which shows that they have the same character when $n$ is taken large enough compared to $D$. We then derive some stability results for $B(\bR)$ (\cref{section:stability}), which imply that $B(\bR)$ is the character of $\scS_D(\bbC^{n+1})$ for any $n$.

\cref{theorem:crystal-of-schur-module} implies that in type $A$, the category $\cV(\bR)$ defined by the truncated shifted Yangians are categorifications of generalised Schur modules associated to column-convex diagrams. We note that our result shows that any skew Schur module is categorified by $\cV(\bR)$, for some $\bR$.

\subsection{Acknowledgements} I would like to thank my advisor Oded Yacobi for suggesting this project, and I am grateful for his support, guidance and patience throughout. I'd like to thank Peter Tingley and Alex Weekes for many helpful conversations while they were visiting Sydney. I'd also like to thank Nicolle Gon\'{z}alez and Travis Scrimshaw for many illuminating conversations concerning Demazure crystals and Victor Reiner for answering all of my questions about generalised Schur modules. This research was supported by an Australian Postgraduate Award; the results of this paper will appear in my PhD thesis.
\section{Background} \label{section:background}

\subsection{Notation} \label{section:notation}

Let $G$ be a reductive algebraic group over $\bbC$, equipped with a pinning $T \subseteq B \subseteq G$ where $B$ is a Borel subgroup and $T$ is a maximal split torus. Let $I$ be a set such that $|I|$ is the semisimple rank of $G$. This determines the following combinatorial data:
\begin{enumerate}
	\item The \defn{weight lattice} $P = \Hom(T, \bbG_m)$.
	\item The \defn{simple roots} $\alpha_i \in P$ for all $i \in I$.
	\item The \defn{coweight lattice} $P^\vee = \Hom(\bbG_m, T)$.
	\item The \defn{simple coroots} $\alpha_i^\vee \in P^\vee$ for all $i \in I$.
	\item The \defn{perfect pairing} $\innprod{-, -}: P^\vee \times P \to \bbZ$.
	\item The \defn{dominant weights} $P_+ = \{\lambda \in P \mid \innprod{\alpha_i^\vee, \lambda} \geq 0 \text{ for all } i \in I\}$.
	\item The \defn{simple reflection} $s_i \in \GL(P)$ defined by $s_i(\lambda) = \lambda - \innprod{\alpha_i^\vee, \lambda} \alpha_i$.
	\item The \defn{Weyl group} $W = \innprod{s_i \mid i \in I} \subseteq \GL(P)$, and its \defn{longest element} $w_\circ$.
\end{enumerate}
Because of the proof of \cref{theorem:monomial-crystal-is-crystal} relies on the theory of Nakajima quiver varieties, we assume throughout that $G$ is \defn{simply-laced}, meaning that the Cartan matrix $\innprod{\alpha_i^\vee, \alpha_j}_{i, j}$ is symmetric with all off-diagonal entries either 0 or $-1$. Define the \defn{Dynkin diagram} of $G$ as the simple graph with vertex set $I$, where $i \sim j$ if and only if $\innprod{\alpha_i^\vee, \alpha_j} = -1$. We may then fix a two-colouring $I = I_0 \sqcup I_1$ of the Dynkin diagram, and we say that the \defn{parity} of a vertex $i \in I$ is \defn{even} if $i \in I_0$ and \defn{odd} if $i \in I_1$.

A finite-dimensional module $V$ over $G$ decomposes into \defn{weight spaces} $V = \bigoplus_{\lambda \in P} V_\lambda$, where $V_\lambda = \{ v \in V \mid t \cdot v = \lambda(t) v \text{ for all } t \in T\}$. Let $\bbZ[P]$ denote the group algebra of $P$ written multiplicatively, so that $e^\lambda e^\mu = e^{\lambda + \mu}$. Then the \defn{formal character} of the module $V$ is the sum $\ch V = \sum_{\lambda \in P} (\dim V_\lambda) e^\lambda \in \bbZ[P]^W$, where $\bbZ[P]^W$ denotes the subalgebra invariant under the action $s_i \cdot e^\lambda = e^{s_i \lambda}$ of $W$.

For each dominant weight $\lambda \in P^+$, let \defn{$V(\lambda)$} denote the irreducible module of highest-weight $\lambda$. The category of finite-dimensional $G$-modules is semisimple, with the modules $\{V(\lambda) \mid \lambda \in P\}$ forming a complete irredudant list of simple objects. The characters of the $V(\lambda)$ for $\lambda \in P^+$ form a basis for $\bbZ[P]^W$, and hence the isomorphism class of any finite-dimensional $G$-module is determined entirely by its character.

\begin{Remark}
	We have restricted to the case of reductive $G$ in order to simplify the exposition, however our character formula \cref{theorem:inductive-character-formula} holds in the more general Kac-Moody setting provided that the Dynkin diagram is still simply-laced and bipartite. In order to state the analagous results in this more general setting, one would replace the category of finite-dimensional $G$-modules by the category $\cO^\integrable$ for the corresponding quantum group. In particular, our results apply to finite types $A, D, E$ and their untwisted affinisations, excluding $A_1^{(1)}$ which is not simply-laced, and $A_{n}^{(1)}$ for $n$ odd, which is not bipartite.
\end{Remark}

\subsection{Crystals} \label{section:crystal-preliminaries}

Crystals were unearthed by Kashiwara \cite{kashiwaraCrystalizingTheqanalogueUniversal1990, kashiwaraCrystalBasesAnalogue1991a, kashiwaraCrystalBasesModified1994}. There is a rather general notion of a $G$-crystal, however we will only require the notion of an \textit{upper-seminormal} crystal, for which we can give some simplified axioms and definitions. We follow the exposition of \cite[Section~2]{josephDecompositionTheoremDemazure2003} for this section.

An upper-seminormal \defn{abstract $G$-crystal} is a set $B$, together with a \defn{weight function} $\wt \colon B \to P$, and for each $i \in I$, \defn{crystal operators} $e_i, f_i \colon B \to B \sqcup \{0\}$ and maps $\varepsilon_i, \varphi_i \colon B \to \mathbb{Z}$ satisfying the following axioms:
\begin{enumerate}
	\item $\varphi_i(b) = \varepsilon_i(b) + \innprod{\alpha_i^\vee, \wt b}$ for all $i \in I, b \in B$.
	\item $e_i b = b'$ if and only if $b = f_i b'$, for all $b, b' \in B$.
	\item For all $b \in B, i \in I$ such that $e_i(b) \in B$, we have $\wt(e_i(b)) = \wt b + \alpha_i$.
	\item $\varepsilon_i(b) = \max\{k \geq 0 \mid e_i^k b \in B\}$.
\end{enumerate}
By the above axioms, the data of an upper-seminormal abstract crystal is entirely determined by $(B, \wt, (e_i)_{i \in I})$. The upper-seminormal abstract crystal $B$ is called a \defn{seminormal abstract crystal} if it additionally satisfies $\varphi_i(b) = \max\{k \geq 0 \mid f_i^k b \in B\}$ for all $i \in I, b \in B$.

Each crystal defines a \defn{crystal graph}, a directed graph on the vertex set $B$, with an $i$-labelled edge from $b$ to $b'$ whenever $f_i(b) = b'$. The edge-labelled graph is equivalent to the data of the $e_i$ or $f_i$, and hence an upper-seminormal abstract crystal is determined entirely by its weight function and graph. We say that $\cB$ is \defn{connected} if the underlying undirected graph of its crystal graph is connected. An element $b \in B$ is called \defn{primitive} if $e_i(b) = 0$ for all $i \in I$, or equivalently if it has no incoming edges in the crystal graph. An element $b \in B$ is called \defn{highest-weight} if it is both primitive, and there is a directed path in the crystal graph from $b$ to every element of $B$. 

There are two different rules for forming the tensor product of two abstract crystals, we use the convention from \cite{kashiwaraCrystalBasesModified1994}. If $B_1, B_2$ are abstract $G$-crystals, then their tensor product $B_1 \otimes B_2$ has underlying set the Cartesian product $B_1 \times B_2$, with pairs of elements written $b_1 \otimes b_2$, the convention that $0 \otimes b_2 = b_1 \otimes 0 = 0$, and maps given by
\begin{align}
	\wt(b_1 \otimes b_2) &= \wt(b_1) + \wt(b_2) \\
	\varepsilon_i(b_1 \otimes b_2) &= \max\{\varepsilon_i(b_1), \varepsilon_i(b_2) - \innprod{\alpha_i^\vee, \wt b_1}\} \\
	\varphi_i(b_1 \otimes b_2) &= \max\{\varphi_i(b_2), \varphi_i(b_1) + \innprod{\alpha_i^\vee, \wt b_2}\} \\
	f_i(b_1 \otimes b_2) &= \begin{cases}
		f_i b_1 \otimes b_2 & \text{if } \varphi_i(b_1) > \varepsilon_i(b_2) \\
		b_1 \otimes f_i b_2 & \text{if } \varphi_i(b_1) \leq \varepsilon_i(b_2)
	\end{cases} \\
	e_i(b_1 \otimes b_2) &= \begin{cases}
		e_i b_1 \otimes b_2 & \text{if } \varphi_i(b_1) \geq \varepsilon_i(b_2) \\
		b_1 \otimes e_i b_2 & \text{if } \varphi_i(b_1) < \varepsilon_i(b_2)
	\end{cases} \label{equation:ei-tensor-first}
\end{align}
This gives the category of abstract crystals the structure of a monoidal category (the tensor product of two upper-seminormal crystals is again upper-seminormal).

It has been shown \cite{kashiwaraCrystalBasesAnalogue1991a} that for each $\lambda \in P^+$, the highest-weight module $V(\lambda)$ admits a seminormal crystal base $\cB(\lambda)$. We say that an abstract $G$-crystal is simply a \defn{$G$-crystal} if it is a disjoint union of various $\cB(\lambda)$. Each $\cB(\lambda)$ is connected as a graph, with a unique vertex $b_\lambda \in \cB(\lambda)$ satisfying $e_i(b_\lambda) = 0$ for all $i \in I$. Such a vertex $b_\lambda$ is called \defn{highest weight}, since it is both \defn{primitive} (meaning it is killed by all the $e_i$) and also generates the whole of $\cB(\lambda)$ under the $f_i$ operators.

The subcategory of $\fg$-crystals is closed under tensor product, and the decomposition multiplicities of the $\cB(\lambda)$ agree with those of the $V(\lambda)$:
\[
	[\cB(\lambda) : \cB(\mu) \otimes \cB(\nu)] = [V(\lambda) : V(\mu) \otimes V(\nu)]
	\quad
	\text{ for all } \lambda, \mu, \nu \in P^+.
\]

For each $\lambda \in P$ we denote by $B_\lambda = \{b \in B \mid \wt b = \lambda \}$ the $\lambda$-weight elements of $B$.  The \defn{formal character} of an abstract $G$-crystal is the sum $\ch B = \sum_{\lambda \in P} \abs{B_\lambda} e^\lambda \in \bbZ[P]$. If the abstract $G$-crystal is seminormal, then $\ch B \in \bbZ[P]^W$. For each $\lambda \in P^+$, we have $\ch V(\lambda) = \ch \cB(\lambda)$.

\subsection{Demazure modules and Demazure crystals}
\label{section:demazure-crystals}

Fix a $\lambda \in P^+$. The elements of the Weyl group orbit $W \cdot \lambda$ are called the \defn{extremal weights} of $V(\lambda)$, and their corresponding weight spaces $V(\lambda)_{w \lambda}$ are all one-dimensional. The $B$-submodule generated by $V(\lambda)_{w \lambda}$ is called the \defn{Demazure module} $V_w(\lambda)$. As vector spaces we have $V_e(\lambda) = V(\lambda)_\lambda$, the one-dimensional highest-weight space, and $V_{w_\circ}(\lambda) = V(\lambda)$ the whole module. We say that $V_w(\lambda)$ is the Demazure module of \defn{Demazure lowest weight} $w \lambda$.

For each $\mu \in P$, let $D(\mu)$ denote the Demazure module of Demazure lowest weight $\mu$. The collection $\{D(\mu) \mid \mu \in P\}$ is a complete irredundant list of Demazure modules, and furthermore the \defn{Demazure characters} $\{\ch D(\mu) \mid \mu \in P\}$ form a basis for $\bbZ[P]$. (The triangularity property $\ch D(\mu) = e^\mu + \sum_{\nu > \mu} c_\nu e^\nu$ shows that the Demazure characters are linearly independent, while the fact that they span follows from the fact that $\bbZ[P]$ is a limit of finite-dimensional subspaces of the form $\Span_\bbZ{X}$ for $X \subseteq P$ Weyl-invariant).

A character formula for the Demazure modules was first given in \cite{demazureDesingularisationVarietesSchubert1974}, with a more recent proof in the arbitrary symmetrisable Kac-Moody setting appearing in \cite{kashiwaraCrystalBaseLittelmann1993}. For each $i \in I$, define the $\bbZ$-linear \defn{Demazure operator} $\pi_i: \bbZ[P] \to \bbZ[P]$ by
\begin{align}
	\label{equation:demazure-operator}
	\pi_i(e^\lambda)
	&= \frac{e^{\lambda} - e^{s_i \lambda - \alpha_i}}{1 - e^{- \alpha_i}} \\
	&= \begin{cases}
		e^\lambda + e^{\lambda - \alpha_i} + e^{\lambda - 2 \alpha_i} + \cdots + e^{s_i \lambda}
			& \text{if } \innprod{\alpha_i^\vee, \lambda} \geq 0, \\
		0
			& \text{if } \innprod{\alpha_i^\vee, \lambda} = -1, \\
		- e^{\lambda + \alpha_i} - e^{\lambda + 2\alpha_i} - \cdots - e^{s_i \lambda - \alpha_i}
			& \text{if } \innprod{\alpha_i^\vee, \lambda} \leq -2.
	\end{cases}
\end{align}
The Demazure operators define a 0-Hecke action on $\bbZ[P]$, that is to say each is idempotent ($\pi_i^2 = \pi_i$ for all $i \in I$) and they satisfy the braid relations (if $i \sim j$ then $\pi_i \pi_j \pi_i = \pi_j \pi_i \pi_j$, and if $i \not \sim j$ then $\pi_i$ and $\pi_j$ commute). The \textit{Demazure character formula} states that when $(i_1, \ldots, i_r)$ is a reduced decomposition for $w$, then
\begin{equation}
	\ch V_w(\lambda) = \pi_{i_1} \cdots \pi_{i_r} e^\lambda.
\end{equation}

It was shown \cite{kashiwaraCrystalBaseLittelmann1993} that each Demazure module $V_w(\lambda)$ for $\lambda \in P^+$ admits a crystal base $\cB_w(\lambda)$, called a \defn{Demazure crystal} (although $\cB_w(\lambda)$ is in general an \textit{abstract} $G$-crystal rather than a crystal). Moreover, the crystal $\cB_w(\lambda)$ can be obtained from $\cB(\lambda)$ in the following way. Define the \defn{extension of $i$-strings} operator $\fD_i$, which acts on a subset $X \subseteq B$ of an abstract $G$-crystal:
\begin{equation}
	\fD_i(X) = \bigcup_{n \geq 0} \{f_i^n(x) \mid x \in X\} = \{b \in B \mid e_i^n(b) \in X \text{ for some } n \in \bbN\}.
\end{equation}
The operators $\fD_i$ satisfy $X \subseteq \fD_i X \subseteq B$, and $\fD_i( \fD_i X ) = \fD_i X$. Then let $(i_1, \ldots, i_r)$ be a reduced decomposition for $w \in W$, and we have
\begin{equation} \label{equation:demazure-crystal-definition}
	\cB_w(\lambda) = \fD_{i_1} \cdots \fD_{i_r} \{ b_\lambda \},
\end{equation}
where $b_\lambda \in \cB(\lambda)$ is the highest-weight element. Note that \cref{equation:demazure-crystal-definition} only defines a subset of $\cB(\lambda)$: we equip this subset with its canonical abstract upper-seminormal crystal structure coming from the restrictions of $\wt$ and $e_i$ for $i \in I$.

\subsection{Multisets}
\label{section:multisets}

We will use multisets throughout the paper. Multisets will always be denoted using boldface type, such as $\bR, \bS, \bT$, or $\bQ$. Given a set $X$, a \defn{multiset based in $X$} is a function $\bR \colon X \to \bbN$, where we write $\bR[x]$ for the value of $\bR$ at $x \in X$, henceforth called the \defn{multiplicity} of $x$ in $\bR$. The \defn{support} of $\bR$ is the subset $\Supp(\bR) = \{x \in X \mid \bR[x] > 0\} \subseteq X$, and a multiset is \defn{finite} if its support is finite. Any summations or products over $\bR$ are taken with multiplicity, so for example if $f \colon X \to G$ is a function into an abelian group (written multiplicatively) and $\bR$ is a finite multiset based in $X$, then $\prod_{x \in \bR} f(x) := \prod_{x \in X} f(x)^{\bR[x]}$. If $\bR$ and $\bQ$ are multisets based in $X$, their \defn{multiset union} is the function $\bR + \bQ$. We say $\bQ$ is a \defn{sub-multiset} of $\bR$ if $\bQ[x] \leq \bR[x]$ for all $x \in X$, and in this case, their \defn{multiset difference} is the function $\bR - \bQ$.

The notation we use for multisets is similar to set notation, with exponents denoting multiplicity. For example, if $X = \{x, y, z\}$ is a base set, then $\bR = \{x^2, y\}$ denotes a multiset $\bR$ based in $X$ where $x$ appears with multiplicity $2$, and $y$ appears with multiplicity $1$ (or treating $\bR$ as a function $\bR \colon X \to \mathbb{N}$, we have $\bR[x] = 2, \bR[y] = 1, \bR[z] = 0$).
\section{Definition of the product monomial crystal} \label{section:definition-of-both-crystals}

The product monomial crystal is defined as a certain subcrystal of the \textit{Nakajima monomial crystal}. The Nakajima monomial crystal is not only a crystal, but also has an abelian group operation given by multiplication of monomials. The product monomial crystal will be the monomial-wise product of certain subcrystals of the Nakajima monomial crystal.

\subsection{The Nakajima monomial crystal} \label{section:monomial-crystal}

Let $G$ be a pinned reductive group as in \cref{section:notation}, we now define the Nakajima monomial crystal $\cM(G)$ as in \cite[Section~2]{hernandezLevelMonomialCrystals2006}. Let $\bbZ\{I \times \bbZ\}$ denote the free abelian group of \textit{monomials} in the variables $\{y_{i, c} \mid i \in I, c \in \bbZ\}$. Let $\cA(G) = P \times \bbZ\{I \times \bbZ\}$ be the product of abelian groups, written such that a typical element $p \in \cA$ is of the form
\begin{equation}
	p = e^{\wt(p)} \prod_{(i, c) \in I \times \bbZ} y_{i, c}^{p[i, c]},
\end{equation}
for some element $\wt(p) \in P$ and coefficients $p[i, c] \in \bbZ$, finitely many of which are nonzero. For each $i \in I$ and $c \in \bbZ$, define the auxiliary monomial
\begin{equation}
	z_{i, k} = e^{\alpha_i} y_{i, k + 1} y_{i, k - 1} \prod_{j \sim i} y_{j, k}^{-1}.
\end{equation}

\begin{Definition}
	\label{definition:monomial-crystal}
	The \defn{Nakajima monomial crystal} $\cM(G)$ is defined to be the submodule of $\cA(G)$ satisfying the two conditions
	\begin{enumerate}
		\item $\innprod{\alpha_i^\vee, \wt(p)} = \sum_{c \in \bbZ} p[i, c]$ for all $i \in I$, and
		\item $p[i, c] = 0$ if $i$ and $c$ have opposite parities.
	\end{enumerate}
	For each monomial $p \in \cM(\fg)$, define
	\begin{enumerate}
		\item $\varphi_i^k = \sum_{l \geq k} p[i, l]$, the \defn{upper column sum}.
		\item $\varphi_i(p) = \max_k \varphi_i^k(p)$, the \defn{largest upper column sum}.
		\item $\varepsilon_i^k(p) = -\sum_{l \leq k} p[i, l]$, the \defn{negated lower column sum}.
		\item $\varepsilon_i(p) = \max_k \varepsilon_i^k(p)$, the \defn{largest negated column sum}.
		\item If $\varphi_i(p) \neq 0$, let $F_i(p) = \max\{k \in \bbZ \mid \varphi_i^k(p) = \varphi_i(p)\}$, the largest $k$ maximising $\varphi_i^k(p)$.
		\item If $\varepsilon_i(p) \neq 0$, let $E_i(p) = \min\{k \in \bbZ \mid \varepsilon_i^k(p) = \varepsilon_i(p)\}$, the smallest $k$ maximising $\varepsilon_i^k(p)$.
	\end{enumerate}
	The crystal structure on $\cM(G)$ is then defined by $\wt$, $\varepsilon_i$, and $\varphi_i$ as above, and
	\begin{equation}
		\begin{aligned}
			e_i(p) = \begin{cases}
				0 &\text{if } \varepsilon_i(p) = 0 \\
				p z_{i, E_i(p)} & \text{otherwise}
			\end{cases}
			&&&&&
			f_i(p) = \begin{cases}
				0 &\text{if } \varphi_i(p) = 0 \\
				p z_{i, F_i(p) - 2}^{-1} & \text{otherwise}.
			\end{cases}
		\end{aligned}
	\end{equation}
\end{Definition}
It is routine to verify that $(\cM(G), \wt, \varepsilon_i, \varphi_i, e_i, f_i)$ is an abstract seminormal $G$-crystal.

\begin{Theorem}[{\cite{kashiwaraRealizationsCrystals2002}}] \label{theorem:monomial-crystal-is-crystal}
	The monomial crystal $\cM(G)$ is a $G$-crystal.
\end{Theorem}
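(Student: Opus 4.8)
The assertion is that the abstract seminormal $G$-crystal $\cM(G)$ is in fact a disjoint union of the crystals $\cB(\lambda)$; this is the theorem of Kashiwara, building on Nakajima. Since the property of being a $G$-crystal is tested on connected components, and $\cM(G)$ breaks into connected components in the obvious way, it suffices to show that each connected component is isomorphic to $\cB(\lambda)$ for a suitable $\lambda \in P^+$. The first step is to locate a highest-weight element in each component. By the formula for $e_i$ and the definition of $\varepsilon_i$, a monomial $p$ is primitive precisely when every lower column sum $\sum_{l \le k} p[i,l]$ is $\ge 0$; call such a $p$ \emph{dominant}, and note $\wt(p) \in P^+$ with $\innprod{\alpha_i^\vee,\wt(p)} = \sum_c p[i,c]$. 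I would then argue that each component is finite — applying $e_i$ or $f_i$ multiplies $p$ by $z_{i,k}^{\pm1}$, which perturbs the column sums in a bounded way, so that the total degree and the spread of the support of the monomials in a component stay bounded — and a finite seminormal crystal necessarily has a unique highest-weight element, namely a dominant monomial from which the component is generated by the $f_i$.

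Next I would identify the component $\cM(p)$ of a dominant monomial $p$ of weight $\lambda$. Factor $p = y_{i_1,c_1}\cdots y_{i_n,c_n}$ into the ``fundamental'' monomials $y_{i,c}$, so that $\lambda = \varpi_{i_1} + \cdots + \varpi_{i_n}$. The two ingredients are: (a) the component $\cM(y_{i,c})$ of a single fundamental monomial is isomorphic to $\cB(\varpi_i)$ — this I would establish by giving an explicit description of $\cM(y_{i,c})$, e.g.\ by induction on the rank of $G$ or by matching it against the Littelmann path model of $\cB(\varpi_i)$; and (b) the monoid multiplication map $m\colon \cA(G)^{\otimes n}\to\cA(G)$, restricted to the connected component $C$ of the top element $y_{i_1,c_1}\otimes\cdots\otimes y_{i_n,c_n}$ inside $\cM(y_{i_1,c_1})\otimes\cdots\otimes\cM(y_{i_n,c_n})$, is a crystal isomorphism onto $\cM(p)$. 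Granting (a) and (b), $C$ is the connected component of the top vector in $\cB(\varpi_{i_1})\otimes\cdots\otimes\cB(\varpi_{i_n})$, which by the decomposition multiplicities recorded in \cref{section:crystal-preliminaries} is a copy of $\cB(\lambda)$; hence $\cM(p)\cong\cB(\lambda)$, which finishes the proof once we know every component contains such a $p$ (the previous paragraph).

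The heart of the matter, and the main obstacle, is ingredient (b): the multiplication map $\cM(G)\otimes\cM(G)\to\cA(G)$ is \emph{not} a morphism of crystals in general, because a product of monomials can exhibit cancellation, and the maxima appearing in the tensor-product rules for $\varepsilon_i$ and $\varphi_i$ need not be attained ``column by column.'' What rescues the situation on the relevant subsets is a non-interference phenomenon: for $p$ dominant, every monomial in $\cM(p)$ has controlled, essentially nonnegative exponents in the relevant range of columns, so that the column sums of a product really do decompose as predicted by the tensor rule; formalising this, together with the rank-induction for the base case (a), is where the work lies. Because $G$ is simply-laced there is an alternative that sidesteps (b): once components are known to be finite and to possess highest-weight elements, one verifies Stembridge's local axioms directly from \cref{definition:monomial-crystal} — a finite, purely local computation with $e_i$ and $e_j$ — and concludes that every component is some $\cB(\lambda)$.
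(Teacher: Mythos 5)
The paper itself does not prove this theorem; the bracketed attribution and the paragraph immediately following the statement make clear that the result is quoted from Kashiwara's paper \cite{kashiwaraRealizationsCrystals2002}. The only reasoning supplied is the observation that the parity-respecting index set $I \ptimes \bbZ$ is a ``good'' subset in the sense of Kashiwara's Proposition~3.1, so that the restriction of Kashiwara's monomial crystal to this index set inherits the property of being a $G$-crystal. Your proposal, by contrast, is a sketch of a first-principles proof --- a genuinely different undertaking, and one the paper does not attempt.

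As a sketch it has two unjustified steps. First, you argue that each connected component of $\cM(G)$ is finite, and then conclude that ``a finite seminormal crystal necessarily has a unique highest-weight element.'' This jump is not warranted: being a highest-weight element in the sense of \cref{section:crystal-preliminaries} means being primitive \emph{and} dominating the component, and finiteness plus seminormality alone do not guarantee that any primitive element dominates. Proving that every primitive monomial in $\cM(G)$ is dominant and generates its component is essentially the content of the theorem, so the argument as written is circular. Second --- and you flag this yourself --- step (b), that monomial multiplication restricted to the connected component of the top tensor is a crystal isomorphism onto $\cM(p)$, is left entirely unproved, and it is precisely what the factorisation strategy hangs on. The paper's own remark in \cref{section:definition-of-product-monomial-crystal} that it is ``perhaps surprising'' the monomial-wise product respects the crystal structure should warn against treating this as routine. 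The Stembridge-axiom alternative you mention at the end is, in the simply-laced setting of the paper, the cleanest self-contained route, and it has the virtue of not presupposing any highest-weight element: one verifies the local axioms directly from \cref{definition:monomial-crystal} and Stembridge's theorem then forces each component to be some $\cB(\lambda)$. But you gesture at this verification rather than performing it, so the proposal remains a sketch with gaps where the paper relies on the citation.
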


Because of the parity condition (2) appearing in \cref{definition:monomial-crystal}, it is convenient to introduce the following notation. Let $I \ptimes \bbZ \subseteq I \times \bbZ$ denote the subset of parity-respecting pairs
\begin{equation}
	I \ptimes \bbZ = \{(i, c) \in I \times \bbZ \mid i \text{ and } c \text{ have the same parity}\}.
\end{equation}
The monomial crystal without condition (2) coincides with the crystal defined in \cite[Section~3]{kashiwaraRealizationsCrystals2002}. Restricting to monomials $y_{i, c}$ for $(i, c) \in I \ptimes \bbZ$ forms a ``good'' subset in the sense of \cite[Proposition~3.1]{kashiwaraRealizationsCrystals2002}, hence \cref{theorem:monomial-crystal-is-crystal}.

We often picture elements of $\cM(G)$ in the following way. Place the Dynkin diagram $I$ in the plane, then place the grid $I \ptimes \bbZ$ above the Dynkin diagram as an infinite strip of points. A monomial $p \in \cM(G)$ is a finitely supported assignment $(i, c) \mapsto p[i, c]$ of points to integers, together with a weight $\wt p \in P$. The statistics $\varphi_i^k(p)$ and $\varepsilon_i^k(p)$ can then be pictured as a sum over points in a half-infinite column $i$. An example for $G = \SL_6$ is shown in \cref{figure:phi-epsilon-computation}.

\begin{figure}[ht]
	\centering
	\includegraphics[width=0.4\linewidth]{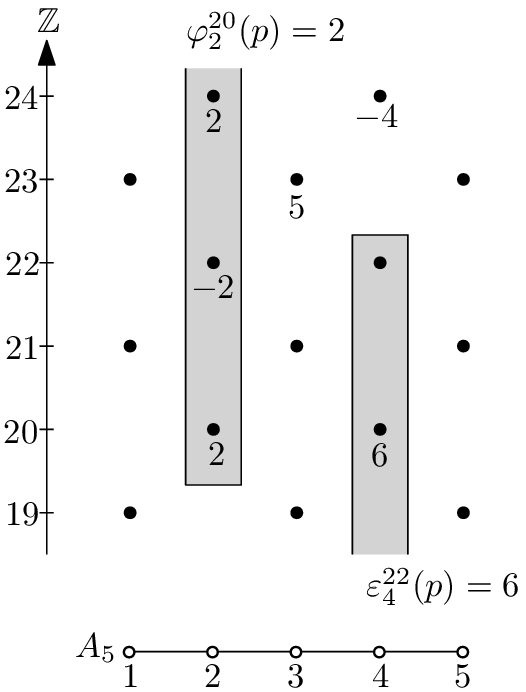}
	\caption{The group $G = \SL_6$ has Dynkin diagram $A_5$, a path on 5 vertices. Pictured above is the monomial
	\[ p = e^{2 \varpi_2 + 5 \varpi_3 + 2 \varpi_4} \cdot y_{2, 22}^{-2} \cdot y_{2, 24}^2 \cdot y_{3,23}^5 \cdot y_{4, 20}^6 \cdot y_{4, 24}^{-4}. \]
	The two shaded regions are showing computations of $\varphi_2^{20}(p)$ and $\varphi_4^{22}(p)$ respectively.}
	\label{figure:phi-epsilon-computation}
\end{figure}

\subsection{The product monomial crystal} \label{section:definition-of-product-monomial-crystal}

From this point onwards, we fix a system $\{\varpi_i\}_{i \in I}$ of \defn{fundamental weights}, meaning any collection of elements satisfying $\innprod{\alpha_i, \varpi_j} = \delta_{ij}$. Note that in general the $\varpi_i$ are members of $P \otimes_\bbZ \bbQ$ rather than the weight lattice $P$, and for general $G$ such a system is not unique, a notable exception being when $G$ is semisimple.

\begin{Definition}
	A \defn{fundamental subcrystal} of $\cM$ is a subcrystal generated by an element of the form $e^{n \varpi_i} y_{i, c}^n$ for some $n \geq 0$ such that $n \varpi_i \in P^+$. Denote this subcrystal by $\cM(i, c)^n$.
\end{Definition}

It is straightforward to check that such a monomial $p = e^{n \varpi_i} y_{i, c}^n$ is highest-weight, and therefore generates a subcrystal of $\cM$ isomorphic to $\cB(n \varpi_i)$ by \cref{theorem:monomial-crystal-is-crystal}. 

As the monomial crystal $\cM$ is a subgroup of $\cA$, it inherits the group operation given by \defn{multiplication of monomials}. Explicity, for $p, q \in \cM$ we have $\wt(p \cdot q) = \wt(p) + \wt(q)$, and $(p \cdot q)[i, c] = p[i, c] + q[i, c]$. For subsets $X, Y \subseteq \cM$, we define their product $X \cdot Y = \{x \cdot y \mid x \in X, y \in Y\}$ as usual, and call this the \defn{monomial-wise product of subsets}.

\begin{Definition}
\label{definition:product-monomial-crystal}
For any finite multiset $\bR$ based in $I \ptimes \bbZ$, define the \defn{product monomial crystal}
\begin{equation}
	\cM(\bR) = \prod_{(i, c)} \cM(i, c)^{\bR[i, c]} \subseteq \cM(G)
\end{equation}
as the monomial-wise product of subsets of the fundamental subcrystals $\cM(i, c)^{\bR[i, c]}$. We set $\cM(\varnothing) = \{1\}$, the trivial monomial.
\end{Definition}

\begin{Remark} \label{remark:omit-weight-term}
	After fixing the system of fundamental weights, the weight of a monomial $p \in \cM(\bR)$ is given by $\wt p = \sum_{i, c} p[i, c] \varpi_i$. So we can safely omit the $e^\lambda$ term from monomials from now on, instead relying on our fixed system of fundamental weights to reconstruct $\lambda$ from the monomial $p$.
\end{Remark}

\begin{Example} \label{example:some-product-monomial-crystals}
	Let $G = \SL_3$, where $I = \{1, 2\}$ with Dynkin diagram $\dynkin[label,label macro/.code={#1}]{A}{2}$.
	\begin{enumerate}
		\item Taking $\bR = \varnothing$ gives $\cM(\bR) = \{1\}$, the trivial crystal.
		\item Taking $\bR = \{(1, 1)\}$ gives $\cM(\bR) = \cM(1, 1) = \{y_{1, 1}, y_{1, 1} z_{1, -1}^{-1}, y_{1, 1} z_{1, -1}^{-1} z_{2, -2}^{-1}\}$, a connected crystal of highest-weight $\varpi_1$.
		\item Taking $\bR = \{(1, 1)^2\}$ gives $\cM(\bR) = \cM(1, 1) \cdot \cM(1, 1)$, which in terms of elements is
		\[ \cM(\bR) = \left\{
			y_{1, 1}^2,\,\,
			y_{1, 1}^2 z_{1, -1}^{-1},\,\,
			y_{1, 1}^2 z_{1, -1}^{-1} z_{2, -2}^{-1},\,\,
			y_{1, 1}^2 z_{1, -1}^{-2},\,\,
			y_{1, 1}^2 z_{1, -1}^{-2} z_{2, -2}^{-1},\,\,
			y_{1, 1}^2 z_{1, -1}^{-2} z_{2, -2}^{-2} 
			\right\}.\] This subset is closed under the crystal operators, and its only highest-weight element is $y_{1, 1}^2$, showing that $\cM(\bR) \cong \cB(\wt y_{1, 1}^2) = \cB(2 \varpi_1)$ as $\SL_3$-crystals.
	\end{enumerate}
	We may denote the monomials above pictorially, as in \cref{figure:phi-epsilon-computation}:
	\[
		y_{1, 1}^a y_{2, 0}^b y_{1, -1}^c y_{2, -2}^d =
		\renewcommand*{\arraystretch}{0.8}
		\renewcommand*{\arraycolsep}{2pt}
		\begin{matrix}
			a & \\
			& b \\
			c & \\
			& d \\
		\end{matrix}
		, \quad\quad
		z_{1, -1} =
		\renewcommand*{\arraystretch}{0.8}
		\renewcommand*{\arraycolsep}{2pt}
		\begin{matrix}
			-1 & \\
			& 1 \\
			-1 & \\
			& 0 \\
		\end{matrix}
		, \quad\quad
		z_{2, -2} =
		\renewcommand*{\arraystretch}{0.8}
		\renewcommand*{\arraycolsep}{2pt}
		\begin{matrix}
			0 & \\
			& -1 \\
			1 & \\
			& -1 \\
		\end{matrix}
	\]
	We then obtain the following pictures for the three connected crystals $\cM(\varnothing)$, $\cM(1, 1)$ and $\cM(1, 1)^2$:

	\[
	\begin{tikzcd}
	\renewcommand*{\arraystretch}{0.8}
	\renewcommand*{\arraycolsep}{2pt}
	\begin{matrix}
		0 & \\
		& 0 \\
		0 & \\
		& 0 \\
	\end{matrix}
	&
	&
	\renewcommand*{\arraystretch}{0.8}
	\renewcommand*{\arraycolsep}{2pt}
	\begin{matrix}
		1 & \\
		& 0 \\
		0 & \\
		& 0 \\
	\end{matrix}
	\ar[r, "f_1"]
	&
	\renewcommand*{\arraystretch}{0.8}
	\renewcommand*{\arraycolsep}{2pt}
	\begin{matrix}
		0 & \\
		& 1 \\
		-1 & \\
		& 0 \\
	\end{matrix}
	\ar[d, "f_2"]
	&
	&
	\renewcommand*{\arraystretch}{0.8}
	\renewcommand*{\arraycolsep}{2pt}
	\begin{matrix}
		2 & \\
		& 0 \\
		0 & \\
		& 0 \\
	\end{matrix}
	\ar[r, "f_1"]
	&
	\renewcommand*{\arraystretch}{0.8}
	\renewcommand*{\arraycolsep}{2pt}
	\begin{matrix}
		1 & \\
		& 1 \\
		-1 & \\
		& 0 \\
	\end{matrix}
	\ar[r, "f_1"]
	\ar[d, "f_2"]
	&
	\renewcommand*{\arraystretch}{0.8}
	\renewcommand*{\arraycolsep}{2pt}
	\begin{matrix}
		0 & \\
		& 2 \\
		-2 & \\
		& 0 \\
	\end{matrix}
	\ar[d, "f_2"]
	\\
	&
	&
	&
	\renewcommand*{\arraystretch}{0.8}
	\renewcommand*{\arraycolsep}{2pt}
	\begin{matrix}
		0 & \\
		& 0 \\
		0 & \\
		& -1 \\
	\end{matrix}
	&
	&
	&
	\renewcommand*{\arraystretch}{0.8}
	\renewcommand*{\arraycolsep}{2pt}
	\begin{matrix}
		1 & \\
		& 0 \\
		0 & \\
		& -1 \\
	\end{matrix}
	\ar[r, "f_1"]
	&
	\renewcommand*{\arraystretch}{0.8}
	\renewcommand*{\arraycolsep}{2pt}
	\begin{matrix}
		0 & \\
		& 1 \\
		-1 & \\
		& 1 \\
	\end{matrix}
	\ar[d, "f_2"]
	\\
	&
	&
	&
	&
	&
	&
	&
	\renewcommand*{\arraystretch}{0.8}
	\renewcommand*{\arraycolsep}{2pt}
	\begin{matrix}
		0 & \\
		& 0 \\
		0 & \\
		& -2 \\
	\end{matrix}
	\end{tikzcd}
	\]
\end{Example}

It is perhaps surprising that the monomial-wise product $\cM(1, 1) \cdot \cM(1, 1)$ turns out to be again a $G$-subcrystal of $\cM(G)$, since the monomial-wise product is not obviously related to the crystal operators. In fact, the subset $\cM(\bR)$ is always a subcrystal of $\cM$, justifying the name product monomial \textit{crystal}. The proof of \cref{theorem:prod-monomial-crystal-is-a-crystal} uses an explicit isomorphism between $\cM(\bR)$ and the crystal defined by a graded Nakajima quiver variety depending on $\bR$. The author does not know of a purely combinatorial proof, and the use of quiver varieties is the reason for the simply-laced restriction on $G$.

\begin{Theorem} \label{theorem:prod-monomial-crystal-is-a-crystal}
	\cite[Corollary 7.8]{kamnitzerHighestWeightsTruncated2019}. For any finite multiset $\bR$ based in $I \ptimes \bbZ$, the set $\cM(\bR)$ is a strict subcrystal of $\cM$. Hence the product monomial crystal $\cM(\bR)$ is a crystal.
\end{Theorem}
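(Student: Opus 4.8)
The plan is to take the geometric route of \cite[Corollary 7.8]{kamnitzerHighestWeightsTruncated2019}: realise $\cM(\bR)$ as the crystal attached to a graded Nakajima quiver variety whose framing is encoded by $\bR$. First I would fix, from the Dynkin diagram of $G$ and the multiset $\bR$, an $(I \ptimes \bbZ)$-graded vector space $W = \bigoplus_{(i,c) \in I \ptimes \bbZ} W_{i,c}$ with $\dim W_{i,c} = \bR[i,c]$, to serve as a graded framing. For each graded dimension vector $v$ this produces the nonsingular and affine graded quiver varieties $\mathfrak{M}^\bullet(v, W)$, $\mathfrak{M}^\bullet_0(v, W)$ and the central-fibre Lagrangian $\mathfrak{L}^\bullet(v, W)$; set $B(\bR) = \bigsqcup_v \mathrm{Irr}\, \mathfrak{L}^\bullet(v, W)$. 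By the geometric construction of crystal bases on quiver varieties (Saito, Kashiwara--Saito, Nakajima), $B(\bR)$ carries an intrinsic seminormal $G$-crystal structure whose Kashiwara operators come from the Hecke correspondences, and since $W$ is finite dimensional it is in fact a $G$-crystal, a finite disjoint union of $\cB(\lambda)$'s.

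Next I would set up the monomial labelling. Following Hernandez--Nakajima, to a component $Z \in \mathrm{Irr}\, \mathfrak{L}^\bullet(v, W)$ one associates the Nakajima monomial $m(Z) \in \cM(G)$ recording, at a generic point of $Z$, the column-by-column jump data of the tautological flags (the geometric incarnation of the statistics $\varphi_i^k, \varepsilon_i^k$ of \cref{definition:monomial-crystal}): the $y_{i,c}$-exponents encode the graded framing $W$ and the $z_{i,c}^{-1}$-factors encode the graded dimension vector $v$. The work here --- all within the Hernandez--Nakajima / Kashiwara--Saito circle --- is to check that $m$ is injective and intertwines the geometric Kashiwara operators on $B(\bR)$ with the combinatorial $e_i, f_i$ of \cref{definition:monomial-crystal}. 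This makes $m$ a strict embedding of $B(\bR)$ into $\cM(G)$; in particular its image is closed under all $e_i$ and $f_i$, hence (as $\cM(G)$ is seminormal) a union of connected components, so a strict subcrystal.

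Finally, and this is the crux, I would identify the image $m(B(\bR))$ with the product set $\cM(\bR) = \prod_{(i,c)} \cM(i,c)^{\bR[i,c]}$. For a single framing piece with $\dim W_{i,c} = n$ the Lagrangian $\mathfrak{L}^\bullet(0, W_{i,c})$ is a point labelled by $e^{n\varpi_i} y_{i,c}^n$, which is highest weight, so $B(\{(i,c)^n\}) = \cM(i,c)^n$. For general $\bR$ one exploits the direct-sum decomposition $W = \bigoplus_{(i,c)} W_{i,c}$ of the framing: there is a compatibility of quiver varieties with decomposition of the framing --- in the spirit of the tensor-product varieties of Malkin and of Nakajima, and of Varagnolo--Vasserot --- which, after passing to irreducible components of the Lagrangians and tracking monomial labels, turns the geometric decomposition into exactly the monomial-wise product of subsets. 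Combined with the one-piece computation this gives $m(B(\bR)) = \prod_{(i,c)} \cM(i,c)^{\bR[i,c]} = \cM(\bR)$, so $\cM(\bR)$ is a strict subcrystal of $\cM$ and hence a $G$-crystal.

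I expect the main obstacle to be exactly this last identification: one must match the monomial-wise \emph{product} of subsets in \cref{definition:product-monomial-crystal} with a \emph{decomposition of the framing} of a quiver variety, and show that the monomial labelling is compatible with it. A purely combinatorial argument --- directly showing $\prod_{(i,c)} \cM(i,c)^{\bR[i,c]}$ is closed under the monomial operators $e_i, f_i$ --- would be preferable but looks hard, because the monomial product is additive in the exponents whereas $e_i, f_i$ see the exponents only through the non-additive extremal-column data $E_i, F_i$.
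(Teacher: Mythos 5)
Your proposal reconstructs the geometric quiver-variety route that the paper cites (but does not reprove) for this result: the paper only notes that the proof of \cite[Corollary 7.8]{kamnitzerHighestWeightsTruncated2019} proceeds via an explicit isomorphism between $\cM(\bR)$ and the crystal of a graded Nakajima quiver variety whose framing encodes $\bR$. Your sketch --- geometric crystal structure on irreducible components of the Lagrangian, Hernandez--Nakajima monomial labelling, and identification of the monomial-wise product with a decomposition of the framing --- fills in that same outline, and you rightly flag the framing-decomposition step as where the real work lies.
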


\begin{Remark}
	Our notation differs from \cite{kamnitzerHighestWeightsTruncated2019} in three ways. Firstly, they use the symbol $\cB(\bR)$ rather than $\cM(\bR)$ to denote the product monomial crystal. Secondly, they define the monomial crystal only for semisimple Lie algebras, and hence the $e^\lambda$ weight term is missing as explained in \cref{remark:omit-weight-term}. Thirdly, they use a collection $(R_i)_{i \in I}$ of multisets where $R_i$ is a multiset based in $2\bbZ + \parity(i)$; to go between these notations, set $\bR[i, k] = R_i[k]$.
\end{Remark}
 
Define the dominant weight $\wt(\bR) = \sum_{(i, c)} \bR[i, c] \varpi_i$. It was noted in \cite[Theorem 2.2]{kamnitzerHighestWeightsTruncated2019} that there exist embeddings of crystals
\begin{equation}
	\cB(\wt(\bR)) \injto \cM(\bR) \injto \bigotimes_{(i, c) \in \bR} \cB(\varpi_i)	
\end{equation}
and that furthermore, by varying $\bR$ while keeping $\wt \bR$ fixed, both extremes $\cM(\bR) \cong cB(\wt(\bR))$ and $\cM(\bR) \cong \bigotimes_{(i, c) \in \bR} \cB(\varpi_i)$ can be achieved.

\begin{Example}
	Let $G = \SL_4$, and fix $\wt(\bR) = 2 \varpi_2$. Then depending on $\bR$, there are three possibilities for the isomorphism class of $\cM(\bR)$:
	\begin{itemize}
		\item If $\bR = \{(2, k)^2\}$ for some $k$, then $\cM(\bR) \cong B(2 \varpi_2)$.
		\item If $\bR = \{(2, k), (2, k+2)\}$ for some $k$, then $\cM(\bR) \cong \cB(2 \varpi_2) \oplus \cB(\varpi_1 + \varpi_3)$.
		\item Otherwise, $\cM(\bR) \cong \cB(2 \varpi_2) \oplus \cB(\varpi_1 + \varpi_3) \oplus \cB(0) \cong \cB(\varpi_2)^{\otimes 2}$.
	\end{itemize}
\end{Example}
\section{Analysis of the product monomial crystal} \label{section:analysis-of-product-monomial-crystal}

In this section we give a high-level analysis of the product monomial crystal, which will lay the foundation for the more precise analysis in \cref{section:demazure-truncations-and-character-formula} leading to the character formula. The results of this section appear in \cite{kamnitzerHighestWeightsTruncated2019}, however we go into more detail here.

\subsection{Labelling elements of the product monomial crystal} \label{section:s-labelling}

Let $\bR$ and $\bS$ be finite multisets based in $I \ptimes \bbZ$, and define the auxiliary monomials
\begin{equation}
	y_\bR := \prod_{(i, c) \in \bR} y_{i, c},
	\quad \quad
	z_\bS := \prod_{(i, k) \in \bS} z_{i, k} = \prod_{(i, k) \in \bS} \frac{y_{i, k} y_{i, k + 2}}{\prod_{j \sim i} y_{j, k+1}},
	\quad \quad
	z_\bS^{-1} = \left(z_\bS\right)^{-1}.
\end{equation}

By the definition of the monomial crystal $\cM(G)$, each element $p \in \cM(i, c)^n$ is of the form $p = y_{i, c}^n z_\bS^{-1}$ for some finite multiset $\bS$ based in $I \ptimes \bbZ$. Hence each element $p$ of the product monomial crystal $\cM(\bR)$ is of the form $y_\bR z_\bS^{-1}$ for some finite multiset $\bS$. In fact, by the linear independence of the $z_{i, k}$ in the abelian group $\cM(G)$, the multiset $\bS$ is uniquely determined by $p$. Using this labelling scheme, when $p = y_\bR z_\bS^{-1} \in \cM(\bR)$ the exponent $p[i, k]$ is
\begin{equation}
\label{equation:p-coefficient}
p[i, k] = \bR[i, k] - \bS[i, k - 2] - \bS[i, k] + \sum_{j \sim i} \bS[j, k - 1].
\end{equation}

\begin{Remark}
	In type $A$, the $\bS$ multisets arising in $\cM(\bR)$ may be interpreted in terms of partitions ``hung from pegs'', with each peg corresponding to an element of $\bR$. For more details (and a picture of this), see \cite[Section 2.5.3]{websterQuantumMirkoviCVybornov2017}. While we do not apply this interpretation explicitly in this paper, the author found it invaluable to make the connection between $\cM(\bR)$ and the generalised Schur modules.
\end{Remark}

\subsection{A partial order, upward-closed and downward-closed sets}

We assume from now on that the Dynkin diagram $I$ is connected. Define a partial order $\leq$ on the set $I \ptimes \bbZ$ as the transitive closure of $(i, c) \leq (i, c+2)$ and $(i, c) \leq (j, c+1)$ for all $j \sim i$. 

A subset $J \subseteq I \ptimes \bbZ$ is called \textit{upward-closed} if whenever $x \in J$ and $y \in L$ satisfy $x \leq y$, then $y \in J$. If $J$ is upward-closed, a \textit{minimal element} $x \in J$ is one such that for all $y \in L$, either $x \leq y$ or $x$ and $y$ are incomparable. Every upward-closed set is a union of the upward-closed sets generated by its minimal elements. For any subset $X \subseteq I \ptimes \bbZ$, define $\up(X) = \{y \in I \ptimes \bbZ \mid x \leq y\}$ to be the upward-closed set generated by $X$, and define $\down(X)$ similarly.

For each $i \in I$, define the \textit{$i$-boundary} of an upward-closed set $J$ to be $\partial_i J = (i, k)$, where $(i, k) \in J$ but $(i, k-2) \notin J$. The \textit{boundary} of an upward-closed set $J$ is $\partial J = \{\partial_i J \mid i \in I\}$. A minimal point of $J$ is a boundary point, but boundary points are not necessarily minimal.

\begin{figure}[ht]
	\centering
	\includegraphics[width=0.7\linewidth]{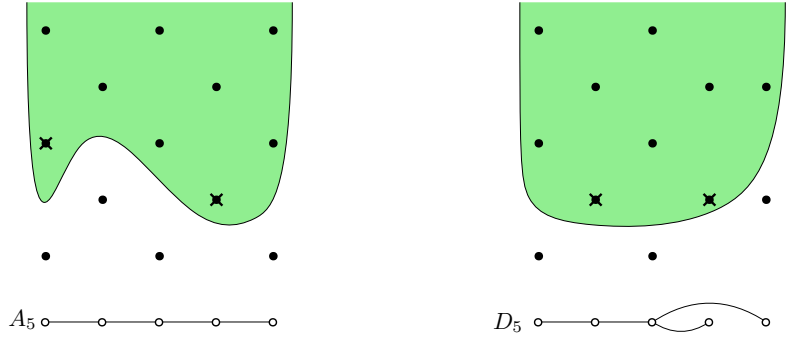}
	\caption{Each diagram depicts an upward-closed set $J \subseteq I \ptimes \bbZ$, as the points enclosed within and above the green region. The points marked with a cross are minimal points of $J$. Both upward-closed sets shown above have five boundary points.}
\end{figure}

A good reason to introduce this order is that the fundamental subcrystals $\cM(i, c)^n$ always ``grow downwards'' from the point $(i, c)$ with respect to this partial order, as made precise in the following lemma.

\begin{Lemma} \label{lemma:fundamental-s-support}
	If $y_{i, c}^n z_\bS^{-1} \in \cM(i, c)^n$, then $\Supp \bS \subseteq \down(\{(i, c - 2)\})$.
\end{Lemma}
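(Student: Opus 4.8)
The plan is to induct along the crystal graph. By \cref{theorem:monomial-crystal-is-crystal} the subcrystal $\cM(i,c)^n$ is a highest-weight crystal isomorphic to $\cB(n\varpi_i)$, with highest-weight element $p_0 = y_{i,c}^n$ generating everything under the lowering operators; so every $p \in \cM(i,c)^n$ can be written $p = f_{j_\ell}\cdots f_{j_1}(p_0)$, and I would induct on $\ell$. By \cref{definition:monomial-crystal} each step acts by $f_j(q) = q\, z_{j, F_j(q) - 2}^{-1}$, so if $q = y_{i,c}^n z_\bT^{-1}$ and $p = f_j(q) \neq 0$, then $p = y_{i,c}^n z_\bS^{-1}$ with $\bS = \bT + \{(j, F_j(q) - 2)\}$. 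The base case $\ell = 0$ gives $\bS = \varnothing$, which is trivially supported in $\down(\{(i, c-2)\})$. Thus everything reduces to the inductive claim: if $\Supp\bT \subseteq \down(\{(i,c-2)\})$ and $f_j(q) \neq 0$ for $q = y_{i,c}^n z_\bT^{-1}$, then $(j, F_j(q) - 2) \leq (i, c-2)$.

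The heart of the argument is the intermediate statement $(\star)$: for every $l$ with $q[j,l] > 0$ one has $(j, l - 2) \leq (i, c-2)$. This is read off from \eqref{equation:p-coefficient} with $\bR = \{(i,c)^n\}$, which gives $q[j,l] = n\,\delta_{(j,l),(i,c)} - \bT[j, l-2] - \bT[j, l] + \sum_{k \sim j} \bT[k, l-1]$. If $(j,l) = (i,c)$ then $(j,l-2) = (i,c-2)$ and we are done by reflexivity. Otherwise the term $n\,\delta_{(j,l),(i,c)}$ vanishes, so $q[j,l] > 0$ forces $\bT[k, l-1] > 0$ for some $k \sim j$; hence $(k, l-1) \in \Supp\bT \subseteq \down(\{(i,c-2)\})$, i.e.\ $(k, l-1) \leq (i, c-2)$, and since $k \sim j$ the defining relation of the order gives $(j, l-2) \leq (k, l-1)$, so $(j, l-2) \leq (i, c-2)$ by transitivity.

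It remains to bound $F_j(q)$. Since $f_j(q) \neq 0$ we have $\varphi_j(q) = \max_k \varphi_j^k(q) > 0$, which is impossible unless $q[j,l] > 0$ for some $l$; let $M$ be the largest such $l$. For every $k > M$ the upper column sum $\varphi_j^k(q) = \sum_{l \geq k} q[j,l]$ is a sum of non-positive terms, hence $\leq 0 < \varphi_j(q)$; therefore the largest maximiser $F_j(q)$ of $\varphi_j^\bullet(q)$ satisfies $F_j(q) \leq M$. Both $M$ and $F_j(q)$ are congruent to $\parity(j)$ modulo $2$: for $M$ because $q[j,M] \neq 0$ and $q \in \cM(G)$, and for $F_j(q)$ because if $F_j(q) \not\equiv \parity(j)$ then $q[j, F_j(q)] = 0$, whence $\varphi_j^{F_j(q)+1}(q) = \varphi_j^{F_j(q)}(q) = \varphi_j(q)$, contradicting the maximality of $F_j(q)$. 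So $F_j(q) - 2 \leq M - 2$ with equal parities, giving $(j, F_j(q) - 2) \leq (j, M - 2)$; combined with $(\star)$ applied to $l = M$, i.e.\ $(j, M-2) \leq (i, c-2)$, this yields $(j, F_j(q) - 2) \leq (i, c-2)$, proving the claim and hence the lemma.

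The only delicate part is the parity and poset bookkeeping around $(\star)$ and the bound on $F_j(q)$; the substantive observation is $(\star)$ itself — the only way a variable $y_{j,l}$ can acquire a positive exponent in an element of $\cM(i,c)^n$ is either to be the seed variable $y_{i,c}$, or to be ``donated'' by a factor $z_{k,l-1}$ of $z_\bT$ with $k \sim j$ lying no higher than $(i,c-2)$. Once $(\star)$ is in hand the remaining steps are routine.
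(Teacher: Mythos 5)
Your proof is correct and takes essentially the same approach as the paper: induction along the crystal graph from the highest-weight element, with the key step being that formula \eqref{equation:p-coefficient} forces a positive exponent at $(j,l)$ to be either the seed $(i,c)$ or to have an upward neighbour already in $\Supp\bT$. The only difference is a minor detour: the paper observes directly that $q[j,F_j(q)] > 0$ (if $q[j,F_j(q)]\le 0$ then $\varphi_j^{F_j(q)+2}(q)\ge\varphi_j(q)$, contradicting maximality of $F_j$) and applies the key observation once at $l=F_j(q)$, whereas you prove $(\star)$ for all $l$ with positive exponent and separately bound $F_j(q)\le M$ with a parity check — both are fine, but the direct route is shorter.
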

\begin{proof}
	The claim is vacuous for the highest-weight element $y_{i, c}^n$ since its associated $\bS$-multiset is empty. As $\cM(i, c)^n$ is connected, it is enough to show that the $f_i$ operators preserve the above property.
	
	Suppose that $p = y_{i, c}^n z_\bS^{-1} \in \cM(i, c)$ satisfies $\Supp \bS \leq (i, c-2)$, and $f_j(p) \neq 0$, so $f_j(p) = p z_{j, k-2}^{-1}$ (i.e. $f_j(p)$ adds the point $(j, k-2)$ to $\bS$). In particular, the definition of $\varphi_j$ gives that $p[j, k] > 0$; this inequality together with \cref{equation:p-coefficient} then implies
	\begin{equation}
		\bR[j, k] + \sum_{l \sim j} \bS[l, k-1] > \bS[j, k] + \bS[j, k-2] \geq 0
	\end{equation}
	where $\bR$ is the multiset $\bR = \{(i, c)^n\}$.
	
	If $\bR[j, k] = n$, then $(j, k) = (i, c)$, and in this case $(j, k - 2) = (i, c - 2)$ and so the property holds. Otherwise, $\bR[j, k] = 0$ and since $\sum_{l \sim j} \bS[l, k-1]$ is strictly positive, at least one upward neighbour of $(j, k-2)$ is already included in $\Supp \bS$, and the claim follows by the transitivity of $\leq$.
\end{proof}

\begin{figure}[ht]
	\centering
	\includegraphics[width=0.7\linewidth]{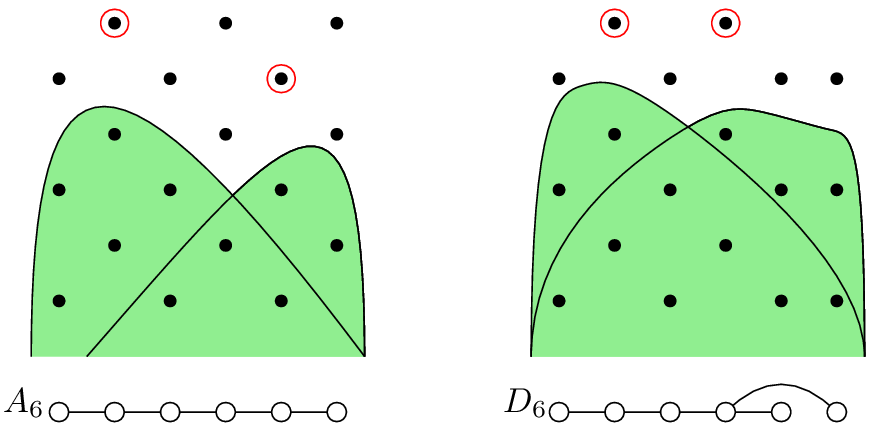}
	\caption{An illustration of \cref{lemma:fundamental-s-support}. Within each picture, a red circled point represents a chosen $(i, c) \in I \ptimes \bbZ$, and the green region directly below shows $\down(\{(i, c-2)\})$. If $y_{i, c}^n z_\bS^{-1} \in \cM(i, c)^n$, then $\Supp \bS \subseteq \down(\{(i, c-2)\})$. Taking the red circled points as defining a multiset $\bR$, then $y_\bR z_\bS^{-1} \in \cM(\bR)$ implies that $\Supp \bS$ is contained in the union of green regions.}
\end{figure}

\subsection{Supports of monomials, highest-weight monomials}

Given a monomial $p = y_\bR z_\bS^{-1} \in \cM(\bR)$, its \textit{$\bR$-support} is defined to be the set $\Supp_\bR(p) = \Supp \bR \cup \Supp \bS$. Note that the $\bR$-support is defined in terms of the $\bS$-labelling, and can be large compared to the support of the original monomial in terms of the $y_{i, c}$, as the next example shows.

\begin{Example}
	When $G = \SL_4$ and $\bR = \{(1, 1), (3, 5)\}$, we have $1 = y_\bR z_\bS^{-1}$ where the multiset $\bS = \{(1, 1), (2, 2), (3, 3)\}$, and so $\Supp_\bR (1) = \{(1, 1), (2, 2), (3, 3), (3, 5)\}$.
\end{Example}

For a monomial $p \in \cM(\bR)$, \cref{equation:p-coefficient} shows that if $p[i, k] \neq 0$, then $(i, k)$ is contained in the upward-closed set generated by $\Supp_\bR (p)$. The next lemma shows that if the $\bR$-support of a monomial $p$ extends below $\bR$ itself, then $p$ cannot be highest-weight.

\begin{Lemma} \label{lemma:ei-deletes-minimal-support}
	Let $p \in \cM(\bR)$, and suppose that $(i, k)$ is minimal in $\Supp_\bR (p)$ and $(i, k) \notin \Supp \bR$. Let $n = \varepsilon_i(p)$ and $q = e_i^n(p)$ be the element at the top of $p$'s $i$-string. Then $n > 0$, and $\Supp_\bR (q) \subseteq \Supp_\bR (p) \setminus (i, k)$.
\end{Lemma}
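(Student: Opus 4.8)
The plan is to first read off the shape of the $i$-th column of $p = y_\bR z_\bS^{-1}$ from the minimality hypothesis, and then to feed this into a short self-referential argument for the support statement. I would start with the first assertion. Since $(i,k) \in \Supp_\bR(p) \setminus \Supp\bR$, we must have $\bS[i,k] \geq 1$. Minimality of $(i,k)$ in $\Supp_\bR(p)$ says that no point strictly below $(i,k)$ lies in $\Supp\bR \cup \Supp\bS$; using $(i,l) < (i,k)$ for $l < k$ of the parity of $i$, and $(j,l) \leq (j,k-1) \leq (i,k)$ for $j \sim i$ and $l \leq k-1$, this gives $\bR[i,l] = \bS[i,l] = 0$ for $l < k$ and $\bR[j,l] = \bS[j,l] = 0$ for $j \sim i$, $l \leq k-1$. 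Plugging these vanishings into \cref{equation:p-coefficient}, I get $p[i,l] = 0$ for all $l < k$ and $p[i,k] = -\bS[i,k] \leq -1$; hence $\varepsilon_i^k(p) = -\sum_{l \leq k} p[i,l] = \bS[i,k] \geq 1$, and so $n = \varepsilon_i(p) \geq \varepsilon_i^k(p) > 0$.

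For the support statement I would isolate the following structural fact: applying $e_i$ to an element of $\cM(\bR)$ can only \emph{delete} points from its $\bS$-label. To see this, take $p' = y_\bR z_{\bS'}^{-1} \in \cM(\bR)$ with $e_i(p') \neq 0$; by \cref{theorem:prod-monomial-crystal-is-a-crystal} we have $e_i(p') = y_\bR z_{\bS''}^{-1}$ for a genuine ($\bbN$-valued) multiset $\bS''$, while the monomial crystal formula gives $e_i(p') = p'\, z_{i, E_i(p')}$. Comparing the two expressions for the $z$-label and invoking the linear independence of the $z$-monomials in $\cM(G)$ forces $\bS''$ to be $\bS'$ with the multiplicity of a single point decreased by $1$. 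Running this down the $i$-string $p = p_0, p_1, \dots, p_n = q$, every $\bS$-multiplicity is weakly decreasing, so $\Supp_\bR(q) \subseteq \Supp_\bR(p)$ — in particular this is where the first half of the desired conclusion comes from.

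For the rest, I would argue by contradiction: assume $(i,k) \in \Supp_\bR(q)$. Then $(i,k)$ is minimal in $\Supp_\bR(q)$ too (it is already minimal in the larger set $\Supp_\bR(p)$), and $(i,k) \notin \Supp\bR$ still, so the column computation from the first paragraph applies verbatim to $q$ and produces $\varepsilon_i(q) \geq 1$. But $q = e_i^n(p)$ is the top of the $i$-string, so $e_i(q) = 0$ and hence $\varepsilon_i(q) = 0$ — contradiction. Therefore $(i,k) \notin \Supp_\bR(q)$, which together with $\Supp_\bR(q) \subseteq \Supp_\bR(p)$ gives $\Supp_\bR(q) \subseteq \Supp_\bR(p) \setminus (i,k)$. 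I expect the one genuinely non-formal step to be the structural fact in the second paragraph — that $e_i$ cannot enlarge the $\bS$-support; after that, the argument is the self-reproducing contradiction above plus bookkeeping with \cref{equation:p-coefficient} and the crystal axioms.
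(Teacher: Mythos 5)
Your proof is correct and follows essentially the same route as the paper's: use minimality of $(i,k)$ together with \cref{equation:p-coefficient} to reduce $\varepsilon_i^k(p)$ to $\bS[i,k]$ (and likewise $\varepsilon_i^k(q)$ to $\bT[i,k]$), then read off $n>0$ from $\varepsilon_i(p)\geq\bS[i,k]\geq 1$ and $(i,k)\notin\Supp_\bR q$ from $\varepsilon_i(q)=0$. The one place you go beyond the paper is in explicitly justifying the containment $\Supp_\bR q\subseteq\Supp_\bR p$ --- your argument that $e_i$ can only delete from the $\bS$-label, via \cref{theorem:prod-monomial-crystal-is-a-crystal} and linear independence of the $z$-monomials, is exactly right and fills a step the paper leaves implicit; phrasing the second conclusion as a contradiction rather than reading off $0=\varepsilon_i(q)\geq\bT[i,k]\geq 0$ directly is merely cosmetic.
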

\begin{proof}
	Let $p = y_\bR z_\bS^{-1}$ and $q = y_\bR z_\bT^{-1}$. Since $\Supp_\bR q \subseteq \Supp_\bR p$, by minimality of $(i, k)$ we have that $p[i, r] = q[i, r] = 0$ for $r < k$, and hence $\varepsilon_i^k(p) = -\bS[i, k]$ and $\varepsilon_i^k(q) = -\bT[i, k]$. Then $n = \varepsilon_i(p) \geq \bS[i, k] > 0$ shows that $n > 0$, and $0 = \varepsilon_i(q) \geq \bT[i, k]$ shows that $(i, k) \notin \Supp_\bR q$.
\end{proof}

\begin{Corollary} \label{corollary:highest-weight-upward-set}
	If $p \in \cM(\bR)$ is highest-weight, then $\Supp_\bR(p) \subseteq \up(\bR)$.
\end{Corollary}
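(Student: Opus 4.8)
The plan is to argue by contradiction, using \cref{lemma:ei-deletes-minimal-support} essentially as a black box. Suppose $p \in \cM(\bR)$ is highest-weight but $\Supp_\bR(p) \not\subseteq \up(\bR)$, so that the finite set $\Supp_\bR(p) \setminus \up(\bR)$ is nonempty. I want to produce a point to which the lemma applies, i.e.\ a point that is minimal in $\Supp_\bR(p)$ and not in $\Supp \bR$.

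First I would pick $(i, k)$ that is $\leq$-minimal inside $\Supp_\bR(p) \setminus \up(\bR)$; such a point exists because $\Supp_\bR(p)$ is finite. The key observation is that this point is in fact minimal in all of $\Supp_\bR(p)$: if $(j, l) \in \Supp_\bR(p)$ satisfies $(j, l) \leq (i, k)$, then $(j, l)$ cannot lie in $\up(\bR)$, since $\up(\bR)$ is upward-closed and $(i,k) \notin \up(\bR)$; hence $(j,l) \in \Supp_\bR(p)\setminus\up(\bR)$, and minimality forces $(j, l) = (i, k)$. Moreover, since $\leq$ is reflexive we have $\Supp \bR \subseteq \up(\bR)$, so the chosen point $(i, k)$ does not lie in $\Supp \bR$. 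Now the hypotheses of \cref{lemma:ei-deletes-minimal-support} are met, and it yields $\varepsilon_i(p) > 0$, hence $e_i(p) \neq 0$. But a highest-weight element is primitive, so $e_i(p) = 0$ for all $i \in I$, a contradiction.

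The only step requiring any care is the promotion of a point minimal in $\Supp_\bR(p) \setminus \up(\bR)$ to a point minimal in $\Supp_\bR(p)$, and this is exactly where upward-closedness of $\up(\bR)$ is used; after that the statement is an immediate consequence of the preceding lemma together with the definition of a highest-weight (hence primitive) element. I do not expect any genuine obstacle here — the real content has already been packaged into \cref{lemma:ei-deletes-minimal-support}.
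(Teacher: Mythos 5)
Your argument is correct, and it matches the intended route: the paper states this as an immediate corollary of \cref{lemma:ei-deletes-minimal-support} and gives no separate proof, so the contradiction argument you spell out is exactly what is being left to the reader. The one step that needs a sentence — promoting a point that is minimal in $\Supp_\bR(p)\setminus\up(\bR)$ to a point that is minimal in all of $\Supp_\bR(p)$ via upward-closedness of $\up(\bR)$, and noting $\Supp\bR\subseteq\up(\bR)$ — you handle correctly, and the finiteness of $\Supp_\bR(p)$ guarantees such a minimal point exists.
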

\section{Truncations and a character formula} \label{section:demazure-truncations-and-character-formula}

In this section, we define a family of subsets $\cM(\bR, J)$ of the product monomial crystal $\cM(\bR)$, parametrised by an upward-closed set $J \subseteq I \ptimes \bbZ$. Each subset $\cM(\bR, J)$ is closed under the crystal raising operators $(e_i)_{i \in I}$, and in fact is a Demazure crystal (as defined in \cref{section:demazure-crystals}). We show how to relate the subsets $\cM(\bR, J)$ for varying $\bR$ and $J$, leading to an inductive Demazure-type character formula for each $\cM(\bR, J)$.

\subsection{Truncations defined by upward sets} \label{section:definition-truncation}

We saw in \cref{corollary:highest-weight-upward-set} that the highest-weight elements $p \in \cM(\bR)$ satisfy $\Supp_\bR(p) \subseteq \up(\bR)$. Let $\cM(\bR, \up(\bR))$ be the subset of all monomials in $\cM(\bR)$ whose $\bR$-support is contained in $\up(\bR)$. The subset $\cM(\bR, \up(\bR))$ is closed under the crystal raising operators $e_i$, and contains all highest-weight elements of $\cM(\bR)$. It is the prototypical example of one of our truncations.

\begin{Definition} \label{definition:truncation}
	Let $J \subseteq L$ be an upward-closed set containing $\Supp \bR$. The \defn{truncation of $\cM(\bR)$ by $J$} is the subset
	\begin{equation}
		\cM(\bR, J) = \{p \in \cM(\bR) \mid \Supp_\bR p \subseteq J\} \subseteq \cM(\bR).
	\end{equation}
	We equip this subset with the canonical upper-seminormal abstract crystal structure coming from the restrictions of $\wt$ and $e_i$ to the subset $\cM(\bR, J)$.
\end{Definition}

Some facts about these subsets are already clear:
\begin{enumerate}
	\item Each truncation $\cM(\bR, J)$ contains every highest-weight element of $\cM(\bR)$, by \cref{corollary:highest-weight-upward-set} and the fact that $J$ contains $\Supp \bR$.
	\item Each truncation $\cM(\bR, J)$ is closed under the crystal raising operators $(e_i)_{i \in I}$, since each $e_i$ will either act by zero, or by removing an element from the $\bS$-multiset of a monomial.
	\item If $p \in \cM(\bR, J)$, then the exponent $p[i, c] = 0$ for any $(i, c) \notin J$, by \cref{equation:p-coefficient}.
\end{enumerate}

Suppose we have two upward-closed sets $J, J'$ which both contain $\bR$, and differ by a single element: $J' = J \cup \{(i, c)\}$. Then the two truncations defined by $J$ and $J'$ can be related by purely crystal-theoretic terms: the larger one will be the extension of $i$-strings (see \cref{section:demazure-crystals}) of the smaller one.

\begin{Lemma} \label{lemma:truncation-extension-of-i-strings}
	Let $J$ and $J'$ be two upward-closed sets containing $\Supp \bR$, which differ in a single element: $J' = J \cup \{(i, k)\}$. Then $\cM(\bR, J') = \fD_i \cM(\bR, J)$, where $\fD_i$ is defined in \cref{section:demazure-crystals}.
\end{Lemma}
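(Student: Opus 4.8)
The plan is to establish the two inclusions $\fD_i\cM(\bR,J)\subseteq\cM(\bR,J')$ and $\cM(\bR,J')\subseteq\fD_i\cM(\bR,J)$ separately, both resting on a single support computation at the new boundary point $(i,k)$. The first step is to record what the hypothesis forces: since $J$ and $J'$ are both upward-closed and differ only by $(i,k)$, the point $(i,k)$ is minimal in $J'$, so $(i,k-2)\notin J'$ and $(j,k-1)\notin J'$ for every $j\sim i$; moreover $(i,k)\notin\Supp\bR$ because $\Supp\bR\subseteq J$; and $J$ and $J'$ agree away from column $i$, while in column $i$ the set $J$ consists of the levels $\geq k+2$ and $J'$ of the levels $\geq k$. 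The key lemma is then: for every monomial $r=y_\bR z_\bS^{-1}\in\cM(\bR,J')$ one has $r[i,k]=-\bS[i,k]\leq 0$. This comes from evaluating \cref{equation:p-coefficient} at $(i,k)$: the term $\bR[i,k]$ vanishes since $(i,k)\notin\Supp\bR$, the term $\bS[i,k-2]$ vanishes since $(i,k-2)\notin J'$, and each term $\bS[j,k-1]$ with $j\sim i$ vanishes since $(j,k-1)\notin J'$ --- here I use the fact already noted after \cref{definition:truncation} that a monomial in a truncation has vanishing exponent at every point outside the defining upward-closed set. This one-sided bound is exactly what breaks the symmetry between $J$ and $J'$: the analogous computation at the boundary point $(i,k+2)$ of $J$ involves $\bR[i,k+2]$ and the $\bS[j,k+1]$, which need not vanish.

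For $\cM(\bR,J')\subseteq\fD_i\cM(\bR,J)$, take $p\in\cM(\bR,J')$ and let $q=e_i^{\varepsilon_i(p)}(p)$ be the element at the top of $p$'s $i$-string, so that $p=f_i^{\varepsilon_i(p)}(q)$ and it suffices to show $q\in\cM(\bR,J)$. Since $\cM(\bR,J')$ is closed under the $e_i$ we have $q\in\cM(\bR,J')$, hence $\Supp_\bR q\subseteq J'$; as $J'\setminus J=\{(i,k)\}$ and $(i,k)\notin\Supp\bR$, the only thing to check is $\bS_q[i,k]=0$, equivalently $q[i,k]=0$ by the key lemma. Now $\varepsilon_i(q)=0$ gives $\varepsilon_i^k(q)\leq 0$, and since $q[i,l]=0$ for $l<k$ this reads $-q[i,k]\leq 0$, i.e.\ $q[i,k]\geq 0$; combined with $q[i,k]\leq 0$ from the key lemma we get $q[i,k]=0$, so $q\in\cM(\bR,J)$ and $p\in\fD_i\cM(\bR,J)$. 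For the reverse inclusion $\fD_i\cM(\bR,J)\subseteq\cM(\bR,J')$, I would show by induction on $n$ that $f_i^n(p)\in\cM(\bR,J')$ whenever $p\in\cM(\bR,J)$. Because each application of $f_i$ merely adjoins one point of column $i$ to the $\bS$-multiset and changes nothing else, it is enough to check that these adjoined points always lie at level $\geq k$. At the inductive step the current monomial $r=f_i^{n}(p)$ lies in $\cM(\bR,J')$, so $r[i,k]\leq 0$ by the key lemma (if $\varphi_i(r)=0$ then $f_i(r)=0$ and there is nothing to do); since $\varphi_i^c(r)=\varphi_i^k(r)$ for all $c\leq k$ while $\varphi_i^{k+2}(r)=\varphi_i^k(r)-r[i,k]\geq\varphi_i^k(r)$, the maximum of $\varphi_i^\bullet(r)$ over all levels equals its maximum over levels $\geq k+2$, so the largest maximizer satisfies $F_i(r)\geq k+2$; hence (see \cref{definition:monomial-crystal}) $f_i$ adjoins the point $(i,F_i(r)-2)$, whose level $F_i(r)-2$ is $\geq k$, so it lies in $J'$.

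Putting the two inclusions together yields $\cM(\bR,J')=\fD_i\cM(\bR,J)$. I expect the only genuinely delicate point to be the key lemma together with the monotonicity deduction that feeds it into the second inclusion: one must keep straight which of $J$, $J'$, and $\Supp\bR$ contains which points in a small neighbourhood of the new boundary point $(i,k)$, and match the index shift in \cref{equation:p-coefficient} with the index of the $z$-monomial that $f_i$ multiplies by. Everything else is a formal manipulation of the crystal operators, the extension-of-strings operator $\fD_i$, and the $e_i$-stability of the truncations already recorded after \cref{definition:truncation}.
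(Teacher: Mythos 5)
Your proof is correct and follows essentially the same route as the paper: one inclusion goes up the $i$-string to land in $\cM(\bR,J)$ (the paper invokes \cref{lemma:ei-deletes-minimal-support} directly, while you re-derive the relevant instance via your sign computation of $r[i,k]$), and the other inclusion tracks the exponent $r[i,k]\leq 0$ to force $F_i(r)\geq k+2$ so that each $f_i$ adjoins a point at level $\geq k$. The ``key lemma'' you isolate is exactly the computation implicit in the paper's phrase ``it follows from the definition of $f_i$.''
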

\begin{proof}
	Suppose that $q \in \cM(\bR, J')$. If $(i, k) \notin \Supp_\bR q$, then $q \in \cM(\bR, J)$. Otherwise, if $(i, k) \in \Supp_\bR q$, then $(i, k)$ is in fact \textit{minimal} in $\Supp_\bR q$, since both $J$ and $J'$ are upward-closed sets. Hence \cref{lemma:ei-deletes-minimal-support} gives that there is some $n > 0$ for which $e_i^n(q) \in \cM(\bR, J)$.
	
	Conversely, suppose that $p \in \fD_i \cM(\bR, J)$. As $(i, k) \notin J$, we have that $p[i, r] = 0$ for all $r \leq k$. It follows from the definition of $f_i$ that both $f_i^n(p)[i, k] \leq 0$ and $f_i^n(p)[i, r] = 0$ for all $n \geq 0$, $r < k$, and hence $\Supp_\bR f_i^n(p) \subseteq J'$.
\end{proof}

The consequence of \cref{lemma:truncation-extension-of-i-strings} is that although the two subsets $\cM(\bR, J)$ and $\cM(\bR, J')$ are defined in terms of supports of monomials, they are related via a purely crystal-theoretic means (the extension of $i$-strings $\fD_i$). However, \cref{lemma:truncation-extension-of-i-strings} can only be used when both the upward-closed sets $J$ and $J'$ include $\Supp \bR$. The next lemma shows how to relate various parameter multisets $\bR$ and $\bR'$, while holding the truncating set $J$ fixed.

\begin{Lemma} \label{lemma:boundary-factorise}
	Suppose that $J$ is an upward-closed set containing $\bR$, and $\bQ$ is a multiset supported along the boundary of $J$: $\Supp \bQ \subseteq \partial J$. Then $\cM(\bR + \bQ, J) = y_\bQ \cdot \cM(\bR, J)$, where $\cdot$ denotes a product as monomials.
\end{Lemma}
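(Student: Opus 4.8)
The plan is to prove the two inclusions separately. Two ingredients are used throughout: (i) the identity $\cM(\bR+\bQ) = \cM(\bR)\cdot\cM(\bQ)$ of subsets of $\cM(G)$, which is immediate from \cref{definition:product-monomial-crystal} together with commutativity of the monomial-wise product; and (ii) the uniqueness of the $\bS$-labelling within a product monomial crystal, so that the multiset $\bS$ in $p = y_\bR z_\bS^{-1}$ is determined once $p$ and the $y$-part are fixed.

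For the inclusion $y_\bQ\cdot\cM(\bR,J)\subseteq\cM(\bR+\bQ,J)$: given $a = y_\bR z_\bS^{-1}\in\cM(\bR,J)$, the monomial $y_\bQ = \prod_{(i,c)} y_{i,c}^{\bQ[i,c]}$ is a product of highest-weight elements of fundamental subcrystals, so $y_\bQ\in\cM(\bQ)$ and $y_\bQ\cdot a\in\cM(\bR)\cdot\cM(\bQ) = \cM(\bR+\bQ)$ by (i). Since $y_\bQ\cdot a = y_{\bR+\bQ} z_\bS^{-1}$, its $\bS$-label is still $\bS$, and $\Supp_{\bR+\bQ}(y_\bQ\cdot a) = \Supp\bR\cup\Supp\bQ\cup\Supp\bS\subseteq J$ because $\Supp\bR\cup\Supp\bS\subseteq J$ and $\Supp\bQ\subseteq\partial J\subseteq J$. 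Hence $y_\bQ\cdot a\in\cM(\bR+\bQ,J)$.

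For the reverse inclusion, take $p\in\cM(\bR+\bQ,J)$ and use (i) to write $p = a' b'$ with $a'\in\cM(\bR)$ and $b'\in\cM(\bQ)$; the key claim is that $b' = y_\bQ$. Write $b'$ as a monomial product of elements $b'_{(i,c)} = y_{i,c}^{\bQ[i,c]} z_{\bT_{(i,c)}}^{-1}\in\cM(i,c)^{\bQ[i,c]}$. By \cref{lemma:fundamental-s-support}, $\Supp\bT_{(i,c)}\subseteq\down(\{(i,c-2)\})$ for each $(i,c)\in\Supp\bQ$; but $(i,c)\in\partial J$ means $(i,c-2)\notin J$, and since $J$ is upward-closed this forces $\down(\{(i,c-2)\})\cap J = \varnothing$. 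Therefore the $\bS$-label $\bT = \sum_{(i,c)}\bT_{(i,c)}$ of $b'$ satisfies $\Supp\bT\cap J = \varnothing$. On the other hand $p = y_{\bR+\bQ} z_{\bS_{a'}+\bT}^{-1}$, so by (ii) the $\bS$-label of $p$ is $\bS_{a'}+\bT$, whose support lies in $J$ since $p\in\cM(\bR+\bQ,J)$; in particular $\Supp\bT\subseteq J$. Hence $\bT = \varnothing$, so $b' = y_\bQ$ and $p = y_\bQ a'$. Finally the $\bS$-label of $a'$ equals that of $p$, hence is supported in $J$, and $\Supp\bR\subseteq J$, so $a'\in\cM(\bR,J)$ and $p\in y_\bQ\cdot\cM(\bR,J)$.

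The substantive part is the reverse inclusion, and its crux is the geometric observation that a boundary point $\partial_i J = (i,c)$ has its entire downward cone $\down(\{(i,c-2)\})$ disjoint from $J$. This is precisely what rules out any nontrivial correction factor $z_\bT^{-1}$ in the $\cM(\bQ)$-part of a monomial lying in the truncation, pinning $b'$ down to $y_\bQ$; the hypothesis $\Supp\bQ\subseteq\partial J$ enters exactly here, since a component of $\bQ$ strictly inside $J$ would permit $z_\bT^{-1}$-factors supported in $J$. Everything else is bookkeeping with the $\bS$-labelling.
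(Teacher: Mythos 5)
Your proof is correct and uses the same two ingredients as the paper's proof: the factorisation $\cM(\bR+\bQ)=\cM(\bR)\cdot\cM(\bQ)$ together with \cref{lemma:fundamental-s-support} and the observation that the downward cone of a point just below $\partial J$ is disjoint from $J$. The paper organises this slightly more compactly by first proving $\cM(\bR,J)\cdot\cM(\bQ,J)=\cM(\bR+\bQ,J)$ and then showing $\cM(\bQ,J)=\{y_\bQ\}$, but the content is the same.
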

\begin{proof}
	Suppose that $p \in \cM(\bR, J)$ and $q \in \cM(\bQ, J)$. By linear independence of the $z_{i, k}$ monomials, we have $\Supp_{\bR + \bQ}(pq) = \Supp_\bR(p) \cup \Supp_\bQ(q)$, which shows that $\cM(\bR, J) \cdot \cM(\bQ, J) = \cM(\bR + \bQ, J)$. (Note that $\cM(\bR) \cdot \cM(\bQ) = \cM(\bR + \bQ)$ by definition of the product monomial crystal). Since $\bQ$ lies along the boundary $\partial J$, \cref{lemma:fundamental-s-support} gives that $\cM(\bQ, J) = \{y_\bQ\}$, and the claim follows.
\end{proof}

\begin{Example} \label{example:compute-truncation}
	We will use the lemmas above to compute the decomposition of the $\SL_4$ crystal $\cM(\bR)$ for $\bR = \{(1, 3), (3, 1), (3, 3)\}$. These steps are shown pictorially in \cref{figure:example:compute-truncation}.

	\begin{enumerate}
		\item Begin with $J_0 =  \up(\{(2, 2)\})$ and $\bR_0 = \varnothing$ to get $\cM(\bR_0, J_0) = \{1\}$.
		\item Let $\bR_1 = \{(1, 3), (3, 3)\}$ and $J_1 = J_0$. Since $\bR_1$ lies along $\partial J_1$ we may apply \cref{lemma:boundary-factorise} to find $\cM(\bR_1, J_1) = y_{1, 3} y_{3, 3} \cdot \cM(\bR_0, J_0) = \{y_{1, 3} y_{3, 3}\}$.
		\item Let $\bR_2 = \bR_1$ and $J_2 = \up\{(3, 1)\}$. Since $J_2$ differs in exactly one element from $J_1$ we apply \cref{lemma:truncation-extension-of-i-strings} to get $\cM(\bR_2, J_2) = \fD_3 \cM(\bR_1, J_1)$. The extension of $3$-strings introduces one more element, giving $\cM(\bR_2, J_2) = \{ y_{1, 3} y_{3, 3},  y_{1, 3} y_{3, 3} z_{3, 1}^{-1}\}$.
		\item Let $\bR_3 = \bR_2 \cup \{(3, 1)\}$ and $J_3 = J_2$. Since $(3, 1)$ lies on the boundary $\partial J_2$, we may apply \cref{lemma:boundary-factorise} once more to find $\cM(\bR_3, J_3) = y_{3, 1} \cdot \cM(\bR_2, J_2) = \{y_{1, 3} y_{3, 3} y_{3, 1}, y_{1, 3} y_{3, 3} y_{3, 1} z_{3, 1}^{-1}\}$.
	\end{enumerate}

	The two elements in $\cM(\bR_3, J_3) = \cM(\bR, J_3)$ are both highest-weight, and so \cref{corollary:highest-weight-upward-set} implies that these are precisely the highest-weight elements of $\cM(\bR)$. Taking their weights gives the decomposition
	\begin{equation}
		\cM(\bR) \cong \cB(\varpi_1 + 2 \varpi_3) \oplus \cB(\varpi_1 + \varpi_2).
	\end{equation}

	\begin{figure}[ht]
		\centering
		\includegraphics[width=0.9\linewidth]{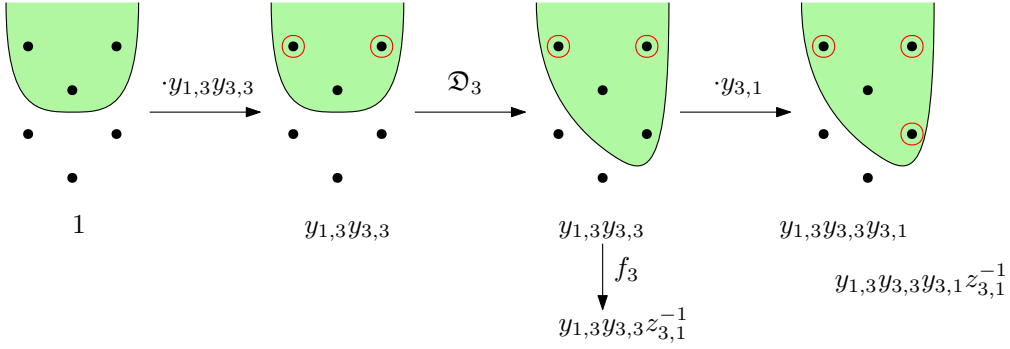}
		\caption{An illustration of \cref{example:compute-truncation} for $\fsl_4$. In each diagram the green shaded region is an upward-closed set $J$, the red circled points are a parameter multiset $\bR$, and below the diagram the truncated crystal $\cM(\bR, J)$ is shown.}
		\label{figure:example:compute-truncation}
	\end{figure}
\end{Example}

\subsection{A Demazure character formula} \label{sec:character-formula}

An \defn{$i$-root string} in an abstract crystal $B$ is a subset of the form $S = \{\ldots, e_i^2(b), e_i(b), b, f_i(b), f_i^2(b), \ldots\}$ for some $b \in B$. If $B$ is seminormal it decomposes into disjoint $i$-root strings, each of finite length. The following definition and theorem are due to Kashiwara \cite{kashiwaraCrystalBaseLittelmann1993}:

\begin{Definition} \label{definition:string-property}
	A subset $X$ of the abstract $\fg$-crystal $B$ has the \defn{string property} if, for every $i$-root string $S$ in $B$, $S \cap X$ is one of $S$, $\varnothing$, or $\{u_S\}$, where $u_S \in S$ satisfies $e_i(u_S) = 0$.
\end{Definition}

\begin{Example}
	Take $B = \cB(\varpi_1 + \varpi_2)$ the connected $\fsl_3$-crystal of the adjoint representation, shown on the left of \cref{figure:string-property}. The subset of $B$ consisting of the six vertices shown in the centre of \cref{figure:string-property} does not have the string property, as the $1$-string of length two violates the conditions of \cref{definition:string-property}. However, the subset of $B$ consisting of the seven vertices shown on the right of \cref{figure:string-property} does have the string property.
	\begin{figure}[ht]
		\centering
		\includegraphics[width=0.8\linewidth]{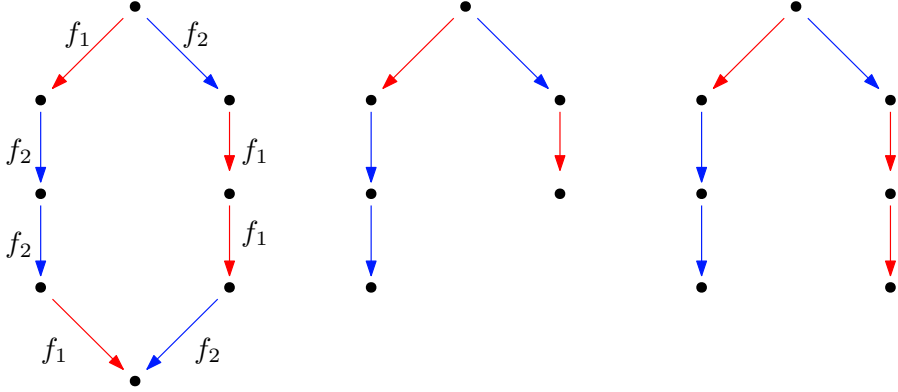}
		\label{figure:string-property}
		\caption{On the left, the connected $\fsl_3$ crystal of highest-weight $\varpi_1 + \varpi_2$. In the middle, a subset without the string property, and on the right a subset  which does have the string property.}
	\end{figure}
\end{Example}

Recall from \cref{section:demazure-crystals} the definition of the Demazure operator $\pi_i$ and the extension-of-strings operator $\fD_i$. The following theorem of Kashiwara establishes a commutation property of the character function with $\pi_i$ and $\fD_i$.

\begin{Theorem} \label{theorem:string-extension-commutes}
	\cite{kashiwaraCrystalBaseLittelmann1993, bumpCrystalBasesRepresentations2017}. If $X$ is a subset of an abstract $\fg$-crystal $B$, and $X$ satisfies the string property, then $\ch(\fD_i X) = \pi_i (\ch X)$ for all $i \in I$.
\end{Theorem}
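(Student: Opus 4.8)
\emph{Proof proposal.} The plan is to localise the identity to a single $i$-root string and then check it directly in each of the three cases permitted by the string property. Since $B$ is seminormal it is a disjoint union $B = \bigsqcup_S S$ of finite $i$-root strings, and the problem decouples over the strings for two reasons. First, the character is additive over this decomposition: $\ch X = \sum_S \ch(X \cap S)$. Second, $\fD_i$ is computed stringwise: for $b \in S$ any nonzero $e_i^n(b)$ again lies in $S$, so $b \in \fD_i X$ if and only if $e_i^n(b) \in X \cap S$ for some $n$, that is, $\fD_i X \cap S = \fD_i(X \cap S)$ with the latter formed inside $S$. Combining these two observations with the $\bbZ$-linearity of $\pi_i$, it suffices to prove $\ch\bigl(\fD_i(X \cap S)\bigr) = \pi_i\bigl(\ch(X \cap S)\bigr)$ for a single string $S$ at a time.

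Fix such an $S$, with $e_i$-maximal element $u_S$; set $\lambda = \wt(u_S)$ and $\ell = \innprod{\alpha_i^\vee, \lambda} \geq 0$. Since $\varepsilon_i(u_S) = 0$ we get $\varphi_i(u_S) = \ell$, so $S = \{u_S, f_i u_S, \dots, f_i^{\ell} u_S\}$ with weights $\lambda, \lambda - \alpha_i, \dots, s_i\lambda$, and hence $\ch S = e^\lambda + e^{\lambda - \alpha_i} + \dots + e^{s_i\lambda} = \pi_i(e^\lambda)$ by the explicit formula for $\pi_i$ in the case $\innprod{\alpha_i^\vee,\lambda}\ge 0$. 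The string property restricts $X \cap S$ to three possibilities. If $X \cap S = \varnothing$, both sides are $0$. If $X \cap S = \{u_S\}$, then $\fD_i\{u_S\} = S$, so the left-hand side is $\ch S = \pi_i(e^\lambda) = \pi_i(\ch\{u_S\})$, the right-hand side. If $X \cap S = S$, then $\fD_i S = S$ and, using $\pi_i^2 = \pi_i$, the right-hand side equals $\pi_i(\pi_i(e^\lambda)) = \pi_i(e^\lambda) = \ch S$. In every case the two sides agree; summing over all $i$-root strings of $B$ gives the theorem.

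The only step requiring genuine care is the reduction: verifying that $\ch$ and $\fD_i$ are truly local to the $i$-root strings and that the linearity of $\pi_i$ permits reassembling the stringwise identities. Once this is set up there is no real obstacle, since the three cases drop out immediately from the explicit description of $\pi_i$ on a weight $\lambda$ with $\innprod{\alpha_i^\vee,\lambda}\ge 0$ together with the fact that $\fD_i$ applied to the top of a string fills out the whole string. One could alternatively simply cite \cite{kashiwaraCrystalBaseLittelmann1993} or \cite[Ch.~13]{bumpCrystalBasesRepresentations2017}, but the direct stringwise verification above is short and self-contained.
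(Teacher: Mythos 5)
The paper does not prove this statement; it is quoted as a result of Kashiwara (and appearing also in Bump--Schilling) and used as a black box in the proof of \cref{theorem:inductive-character-formula}. Your proposal supplies a correct, self-contained proof and it is the standard one: partition $B$ into $i$-root strings (valid since in the setting of the theorem $B$ is seminormal), observe that both $\ch$ and $\fD_i$ localise to a single string, and verify the identity in each of the three cases $X\cap S \in \{\varnothing, \{u_S\}, S\}$, using the explicit formula $\pi_i(e^\lambda) = e^\lambda + e^{\lambda-\alpha_i} + \cdots + e^{s_i\lambda}$ when $\innprod{\alpha_i^\vee,\lambda}\ge 0$ (justified for $u_S$ because $\varepsilon_i(u_S)=0$), and the idempotence $\pi_i^2=\pi_i$ for the case $X \cap S = S$. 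All the reduction steps you flag as needing care (stringwise additivity of $\ch$, $\fD_i X \cap S = \fD_i(X\cap S)$, $\bbZ$-linearity of $\pi_i$) are correctly handled, so there is nothing to correct.
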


It is not true that if some subset $X$ has the string property, then $\fD_i(X)$ has the string property --- for a counterexample, see \cite[Chapter 13]{bumpCrystalBasesRepresentations2017}. However, we can verify directly that all of the truncations we have been considering have the string property.

\begin{Lemma}
	\label{lemma:truncations-have-string-property}
	If $J$ is an upward-closed set containing $\bR$, then $\cM(\bR, J)$ has the string property.
\end{Lemma}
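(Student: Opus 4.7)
The plan is to verify the string property string-by-string. Fix an $i$-root string $S = (p_0, p_1, \ldots, p_n)$ in $\cM(\bR)$ with $p_0 = u_S$ and $p_{j+1} = f_i(p_j) = p_j \cdot z_{i, k_j}^{-1}$, where $k_j = F_i(p_j) - 2$. Two preliminary observations get things going: first, $\cM(\bR, J)$ is closed under $e_i$, since $e_i$ either vanishes or strips one element from the $\bS$-multiset of its input, so $\Supp_\bR$ is weakly decreasing under $e_i$. Second, each $f_i$-step adds the single point $(i, k_j)$ to the $\bS$-multiset, so $\Supp_\bR(p_j)$ is weakly increasing in $j$. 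Together these imply that $\{j : p_j \in \cM(\bR, J)\}$ is a (possibly empty) initial segment $\{0, 1, \ldots, m\}$, and the only thing left to show is that $m = n$ whenever $m \geq 1$.

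The heart of the proof is the monotonicity claim $F_i(p_{j+1}) \geq F_i(p_j)$, equivalently $k_j \leq k_{j+1}$. Multiplication by $z_{i, k_j}^{-1}$ decrements $p[i, k_j]$ and $p[i, F_i(p_j)]$ by one each, so $\varphi_i^k$ drops by $2$ for $k \leq k_j$, by $1$ at $k = F_i(p_j)$, and is unchanged for $k > F_i(p_j)$. Since $F_i(p_j)$ is the \emph{largest} maximiser of $\varphi_i^{(-)}(p_j)$, we have $\varphi_i^k(p_j) \leq \varphi_i(p_j) - 1$ for every $k > F_i(p_j)$. It follows that the new maximum $\varphi_i(p_{j+1})$ equals $\varphi_i(p_j) - 1$, attained at $k = F_i(p_j)$ and possibly at larger $k$; taking the largest such $k$ yields $F_i(p_{j+1}) \geq F_i(p_j)$ as claimed.

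To finish, assume $p_1 \in \cM(\bR, J)$. The point $(i, k_0)$ lies in $J$: either it was already present in the $\bS$-multiset of $p_0$, hence in $\Supp_\bR(p_0) \subseteq J$, or it is freshly added by $f_i$, hence in $\Supp_\bR(p_1) \subseteq J$. By the monotonicity of $k_j$, we have $(i, k_0) \leq (i, k_j)$ in the partial order on $I \ptimes \bbZ$ for every $j$, so $(i, k_j) \in J$ by upward-closure. A trivial induction then gives $\Supp_\bR(p_j) \subseteq J$ for all $j \leq n$, so $p_j \in \cM(\bR, J)$ throughout the string and $m = n$. The main obstacle is the combinatorial monotonicity of $F_i$ in the middle paragraph; everything else is a matter of tracking supports and invoking the upward-closure of $J$.
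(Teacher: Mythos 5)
Your proof is correct, and it takes a genuinely different route from the paper's. The paper argues from the ``top'' of the string: it assumes $p \in \cM(\bR, J)$ but $f_i(p) \notin \cM(\bR, J)\sqcup\{0\}$, identifies $F_i(p)$ with the boundary value $k$ (using that $p[i,r]=0$ for $(i,r)\notin J$), and then computes directly that $\varepsilon_i^l(p) \le 0$ for all $l$, forcing $\varepsilon_i(p)=0$ and hence $p=u_S$. You instead argue from the ``bottom'': you prove the structural fact that $F_i(f_i p) \geq F_i(p)$ holds for \emph{any} monomial $p$ in the Nakajima monomial crystal, so the column-$i$ points $(i,k_j)$ added to the $\bS$-multiset along an $i$-string move weakly upward. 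Combined with upward-closure of $J$, this shows that once a single $f_i$-step stays inside $J$, all subsequent steps do too. Both routes use the same underlying mechanism (maximality of $F_i$ controlling where $\varphi_i^k$ can achieve its maximum after one step), but your approach isolates the monotonicity of $F_i$ along strings as a clean, reusable lemma about the monomial crystal in its own right, independent of truncations, whereas the paper's calculation is tied to the boundary of $J$ and proceeds via $\varepsilon_i$. One cosmetic note: the case split on whether $(i,k_0)$ was ``already present'' versus ``freshly added'' is unnecessary -- $f_i$ always adds $(i,k_0)$ to the $\bS$-multiset, so $(i,k_0) \in \Supp_\bR(p_1) \subseteq J$ unconditionally.
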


\begin{proof}
	Since $\cM(\bR, J)$ is closed under the $e_i$ operators, it suffices to show that for any $p \in \cM(\bR, J)$ with $f_i(p) \notin \cM(\bR, J) \sqcup \{0\}$, that $e_i(p) = 0$. Suppose we have such a $p = y_\bR z_\bS^{-1}$, then by \cref{lemma:truncation-extension-of-i-strings} we must have $f_i(p) = z_{i, k - 2}^{-1} p \in \fD_i \cM(\bR, J)$, where $(i, k) \in \partial J$. By definition of $\varphi_i$, $k$ is largest such that $\varphi_i^k(p) = \varphi_i(p)$, and hence $\varphi_i^{l+2}(p) < \varphi_i^k(p)$ for all $l \geq k$. But since $p[i, r] = 0$ for all $r < k$, we have that $\varepsilon_i^l(p) = \varphi_i^k(p) - \varphi_i^{l + 2}(p) > 0$ for all $l \geq k$, and hence $\varepsilon_i(p) = 0$ and $e_i(p) = 0$.
\end{proof}

We then arrive at an inductive character formula for any truncation $\cM(\bR, J)$ by interpreting both \cref{lemma:truncation-extension-of-i-strings} and \cref{lemma:boundary-factorise} in terms of characters, using \cref{theorem:string-extension-commutes}.

\begin{Theorem} \label{theorem:inductive-character-formula}
	The following rules give an inductive character formula for any truncation $\cM(\bR, J)$:
	\begin{enumerate}
		\item $\ch \cM(\varnothing, J) = 1$ for any $J$.
		\item Suppose $J$ is an upward-closed set containing $\bR$, and $\bQ$ lies along the boundary $\partial J$. Then $\ch \cM(\bR + \bQ, J) = e^{\wt(\bQ)} \cdot \ch \cM(\bR, J)$.
		\item Suppose $J$ is an upward-closed set containing $\bR$, and $J' = J \cup \{(i, k)\}$ is another upward-closed set such that $J \setminus J' = \{(i, k)\}$. Then $\ch \cM(\bR, J') = \pi_i \ch \cM(\bR, J)$.
	\end{enumerate}
\end{Theorem}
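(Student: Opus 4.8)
The plan is that each of the three rules is simply one of the earlier lemmas read off at the level of characters, so the only substantive point is to check that rules (2) and (3) suffice to reduce an arbitrary truncation to the base case (1).

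Rule (1) is immediate: $\cM(\varnothing) = \{1\}$ by definition, and the trivial monomial has empty $\bR$-support, so $\cM(\varnothing, J) = \{1\}$ for every upward-closed $J$. For rule (2), \cref{lemma:boundary-factorise} gives $\cM(\bR + \bQ, J) = y_\bQ \cdot \cM(\bR, J)$ --- the hypotheses match, since $\Supp\bQ \subseteq \partial J \subseteq J$ and $J \supseteq \Supp\bR$ --- and because $\cM(G)$ is a group under monomial multiplication, multiplication by $y_\bQ$ is a bijection onto $\cM(\bR+\bQ, J)$ shifting every weight by $\wt y_\bQ = \sum_{(i,c)\in\bQ}\varpi_i = \wt\bQ$; hence $\ch\cM(\bR+\bQ,J) = e^{\wt\bQ}\,\ch\cM(\bR,J)$. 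For rule (3), \cref{lemma:truncation-extension-of-i-strings} gives $\cM(\bR, J') = \fD_i\cM(\bR, J)$, and \cref{lemma:truncations-have-string-property} says $\cM(\bR, J)$ has the string property, so \cref{theorem:string-extension-commutes} applies and yields $\ch\cM(\bR, J') = \ch\fD_i\cM(\bR, J) = \pi_i\,\ch\cM(\bR, J)$.

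For sufficiency, I would first record that $\cM(\bR)$ is a finite crystal (a monomial-wise product of finitely many finite fundamental subcrystals), so its essential support $C = \bigcup_{p \in \cM(\bR)} \Supp_\bR(p)$ is a finite set; since $\Supp_\bR(p) \subseteq C$ for every $p$, the set $\cM(\bR, J)$ depends only on $J \cap C$, and in particular $\cM(\bR, J) = \cM(\bR, \up(J\cap C))$ (note $\Supp\bR = \Supp_\bR(y_\bR) \subseteq C$), so without loss of generality we may replace $J$ by $\up(J\cap C)$ --- after which every minimal element of $J$ lies in $C$. Now induct on the pair $(\abs{J\cap C}, \abs{\bR})$ in lexicographic order. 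If $\bR = \varnothing$ we are in case (1). Otherwise $J$ is nonempty (it contains $\Supp\bR$) and has a minimal element $x = (i,k)$. If $x \in \Supp\bR$, then $x \in \partial J$, and rule (2) with $\bQ = \{x\}$ replaces $\bR$ by $\bR - \{x\}$, strictly decreasing $\abs{\bR}$ while not increasing $\abs{J\cap C}$ (the essential support of $\bR - \{x\}$ is contained in $C$, since $\bR$-supports of $\cM(\bR - \{x\})\cdot\cM(\{x\}) = \cM(\bR)$ add). If $x \notin \Supp\bR$, then $J \setminus \{x\}$ is again upward-closed (as $x$ is minimal) and still contains $\Supp\bR$, so rule (3) gives $\ch\cM(\bR, J) = \pi_i\,\ch\cM(\bR, J\setminus\{x\})$; since $x \in C$, this strictly decreases $\abs{J\cap C}$. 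Either way the measure drops, so the recursion terminates at (1).

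I do not expect a real obstacle: rules (2) and (3) are exactly \cref{lemma:boundary-factorise} and the combination of \cref{lemma:truncation-extension-of-i-strings}, \cref{lemma:truncations-have-string-property} and \cref{theorem:string-extension-commutes}, all already proved, and the genuinely nontrivial inputs --- the string property and the identification of a one-step truncation with an extension of $i$-strings --- are in hand. The one thing that needs care is the termination bookkeeping: an upward-closed $J$ is an infinite set, so the induction has to be run against the finite essential support $C$ rather than against $J$ itself, and one must notice that passing to the reduced form $\up(J\cap C)$ is harmless and forces every minimal element of $J$ into $C$.
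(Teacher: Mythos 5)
Your verification of the three rules matches the paper's proof exactly: (1) is immediate since $\cM(\varnothing, J) = \{1\}$; (2) is \cref{lemma:boundary-factorise} read through the general identity $\ch(p \cdot X) = e^{\wt p} \ch X$; and (3) combines \cref{lemma:truncation-extension-of-i-strings}, \cref{lemma:truncations-have-string-property}, and \cref{theorem:string-extension-commutes}. Where you go beyond the paper is the termination argument: the paper's proof justifies each rule but does not formally verify that the three rules together give a terminating recursion that reaches the base case $\ch\cM(\varnothing,J)=1$ for an arbitrary $\cM(\bR,J)$ (the paper relies on this being clear from Examples~\ref{example:compute-truncation} and~\ref{example:compute-character}). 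Your bookkeeping via the finite essential support $C = \bigcup_{p \in \cM(\bR)}\Supp_\bR(p)$ and the lexicographic measure $\bigl(\abs{J \cap C}, \abs{\bR}\bigr)$, together with the observation that passing to $\up(J \cap C)$ does not change the truncation, is correct, and it handles the real pitfall here (that $J$ itself is infinite, so one cannot naively induct on $\abs{J}$). The one minor point to flag: when you shrink $\bR$ to $\bR - \{x\}$ via rule (2), the essential support can only shrink, which you do note; this is what keeps the first lexicographic component from increasing and makes the induction sound.
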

\begin{proof}
	Each rule is justified as follows:
	\begin{enumerate}
		\item The truncation $\cM(\varnothing, J)$ is the trivial monomial $\{1\}$, which has character $1$.
		\item If $p \in \cM$ is any monomial and $X \subseteq \cM$ is any subset, then $\ch (p \cdot X) = e^{\wt p} \cdot \ch(X)$. Hence (2) follows from \cref{lemma:boundary-factorise}.
		\item By \cref{lemma:truncation-extension-of-i-strings}, $\cM(\bR, J') = \fD_i \cM(\bR, J)$, and by \cref{lemma:truncations-have-string-property}, $\cM(\bR, J)$ has the string property. The result then follows by taking characters and applying \cref{theorem:string-extension-commutes}.
	\end{enumerate}
\end{proof}

\begin{Example} \label{example:compute-character}
	Let $G = \GL_4$ with the parameter multiset $\bR = \{(1, 3), (3, 1), (3, 3)\}$, similarly to \cref{example:compute-truncation}. We previously determined that when $J = \up(\{(3, 1)\})$ then we had the equality of sets
	\begin{equation}
		\cM(\bR, J) = y_{3, 1} \cdot \fD_3( y_{1, 3} y_{3, 1} \cdot \{1\}).
	\end{equation}
	Applying the rules in \cref{theorem:inductive-character-formula}, we get
	\begin{equation}
		\ch \cM(\bR, J) = e^{\varpi_3} \cdot \pi_3( e^{\varpi_1 + \varpi_3} \cdot 1),
	\end{equation}
	which is most easily computed using the isomorphism $\bbZ[P] \cong \bbZ[x_1, x_2, x_3, x_4]$, choosing our system of fundamental weights to be $\varpi_1 = x_1$, $\varpi_2 = x_1 x_2$ and $\varpi_3 = x_1 x_2 x_3$:
	\begin{align}
		\ch \cM(\bR, J)
		&= x_1 x_2 x_3 \cdot \pi_3(x_1^2 x_2 x_3) \\
		&= x_1^3 x_2^2 x_3 \cdot \pi_3(x_3) \\
		&= x_1^3 x_2^2 x_3 (x_3 + x_4) \\
		&= x_1^3 x_2^2 x_3^2 + x_1^2 x_2 \\
		&= e^{\varpi_1 + 2\varpi_3} + e^{\varpi_1 + \varpi_2}.
	\end{align}
	As $\varpi_1 + 2 \varpi_3$ and $\varpi_1 + \varpi_2$ are dominant, each of $e^{\varpi_1 + 2\varpi_3}$ and $e^{\varpi_1 + \varpi_2}$ is a Demazure character, and hence the Demazure character formula implies
	\begin{equation}
		\cM(\bR) \cong \cB(\varpi_1 + 2 \varpi_3) \oplus \cB(\varpi_1 + \varpi_2).
	\end{equation}
\end{Example}

\subsection{Truncations are Demazure crystals} \label{section:truncations-are-demazure}

The procedure given in \cref{theorem:inductive-character-formula} looks quite similar to the construction of a Demazure crystal (\cref{section:demazure-crystals}), and we will show that each truncation $\cM(\bR, J)$ is in fact a Demazure crystal. We rely on the main result of \cite{josephDecompositionTheoremDemazure2003}, which states that if $X$ is a Demazure crystal and $b$ is a highest-weight element of some crystal, then $\{b\} \otimes X$ is a Demazure crystal. We use this result by formulating \cref{lemma:boundary-factorise} in purely crystal-theoretic terms.

\begin{Lemma} \label{lemma:tensor-embedding-inductive-refined}
	Let $J$ be an upward-closed set containing $\bR$, and let $\bQ$ be a multiset supported along the boundary of $J$, so $\Supp \bQ \subseteq \partial J$. Set $\mu = \wt(\bQ)$, and write $b_\mu \in \cB(\mu)$ for the highest-weight element. There is a bijective, weight-preserving map
	\begin{equation}
		\Phi: \cM(\bR + \bQ, J) \to \cB(\mu) \otimes \cM(\bR, J), \quad \Phi(p) = b_\mu \otimes p / y_\bQ,
	\end{equation}
	which is equivariant under the crystal raising operators, i.e. $\Phi(e_i p) = e_i(\Phi(p))$ for all $i \in I$. Hence $\cM(\bR + \bQ, J) \cong \cB(\mu) \otimes \cM(\bR, J)$ as abstract crystals.
\end{Lemma}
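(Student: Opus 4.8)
The plan is to get the bijection and the weight statement for free from \cref{lemma:boundary-factorise}, and put all the work into $e_i$-equivariance. By \cref{lemma:boundary-factorise} we have $\cM(\bR+\bQ,J) = y_\bQ\cdot\cM(\bR,J)$, and since $\cM$ sits inside a free abelian group, multiplication by the fixed monomial $y_\bQ$ is injective; hence $p\mapsto p/y_\bQ$ is a bijection $\cM(\bR+\bQ,J)\to\cM(\bR,J)$, and $\Phi$, being this bijection followed by $q\mapsto b_\mu\otimes q$, is a bijection onto the subset $\{b_\mu\}\otimes\cM(\bR,J)$ of $\cB(\mu)\otimes\cM(\bR,J)$. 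Weight-preservation is the one-line check $\wt(b_\mu\otimes p/y_\bQ) = \mu + \wt p - \wt y_\bQ = \wt p$, using $\wt y_\bQ = \wt(\bQ) = \mu$. Because upper-seminormal abstract crystals are determined by $\wt$ and the raising operators, it then suffices to verify $\Phi(e_i p) = e_i\Phi(p)$ for each $i$.

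For this, fix $i\in I$, let $\partial_i J = (i,c_i)$, and put $m_i = \bQ[i,c_i]$. I would first note that, since $\bQ$ is supported on $\partial J$, which meets the $i$-th column only at $(i,c_i)$, we have $\mu = \sum_j \bQ[j,c_j]\varpi_j$, so $\varphi_i(b_\mu) = \langle\alpha_i^\vee,\mu\rangle = m_i$ (as $b_\mu$ is highest weight, $\varepsilon_i(b_\mu) = 0$). Next, writing $p = y_{\bR+\bQ}z_\bS^{-1}$ and $q := p/y_\bQ = y_\bR z_\bS^{-1}$ — so $p$ and $q$ carry the \emph{same} $\bS$-multiset — \cref{equation:p-coefficient} gives $p[i,l] = q[i,l]$ for all $l\neq c_i$ and $p[i,c_i] = q[i,c_i] + m_i$; moreover $(i,c_i)\in\partial J$ forces $p[i,l] = q[i,l] = 0$ for all $l < c_i$. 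Summing downward, $\varepsilon_i^l(p) = \varepsilon_i^l(q) = 0$ for $l < c_i$ while $\varepsilon_i^l(p) = \varepsilon_i^l(q) - m_i$ for $l\geq c_i$, from which one reads off $\varepsilon_i(p) = \max\{0,\,\varepsilon_i(q) - m_i\}$ and, when this is positive, $E_i(p) = E_i(q)$.

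It remains to compare with the tensor product rule \cref{equation:ei-tensor-first}, recalling $e_i b_\mu = 0$ and $\varphi_i(b_\mu) = m_i$. If $\varepsilon_i(q)\leq m_i$, then $\varepsilon_i(p) = 0$, so $e_i p = 0 = \Phi(e_i p)$, while $\varphi_i(b_\mu)\geq\varepsilon_i(q)$ gives $e_i(b_\mu\otimes q) = (e_i b_\mu)\otimes q = 0$. If $\varepsilon_i(q) > m_i$, then $\varepsilon_i(p) > 0$, so $e_i p = p\,z_{i,E_i(p)}$, and $E_i(p) = E_i(q)$ yields $(e_i p)/y_\bQ = q\,z_{i,E_i(q)} = e_i q$; at the same time $\varphi_i(b_\mu) < \varepsilon_i(q)$ gives $e_i(b_\mu\otimes q) = b_\mu\otimes e_i q$, so $\Phi(e_i p) = b_\mu\otimes e_i q = e_i\Phi(p)$. (That $e_i p$ and $e_i q$ land in the respective truncations is just closure of truncations under raising operators, noted after \cref{definition:truncation}.) Thus $\Phi$ is a weight-preserving bijection commuting with all $e_i$, hence an isomorphism of abstract crystals onto $\{b_\mu\}\otimes\cM(\bR,J)$ — which is the form needed for the subsequent appeal to the decomposition theorem for $\{b\}\otimes X$. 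The only genuine obstacle is the column-sum bookkeeping behind $\varepsilon_i(p) = \max\{0,\,\varepsilon_i(p/y_\bQ) - \varphi_i(b_\mu)\}$ and the stability of $E_i$: this is the precise sense in which the raising operators interact with the $y_\bQ$ factor exactly as the tensor rule interacts with the highest-weight vector $b_\mu$, and granting it, everything else is formal.
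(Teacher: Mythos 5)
Your proof is correct and takes essentially the same approach as the paper: bijectivity and weight preservation are handled via \cref{lemma:boundary-factorise}, and the real content is the column-sum comparison between $p$ and $q = p/y_\bQ$ that reconciles the monomial-crystal rule for $e_i$ with the tensor-product rule applied to $b_\mu\otimes q$. Your identity $\varepsilon_i^l(p) = \varepsilon_i^l(q) - m_i$ for $l\geq c_i$, and the resulting $\varepsilon_i(p) = \max\{0,\varepsilon_i(q) - m_i\}$ (which matches $\varepsilon_i(b_\mu\otimes q)$ from the tensor rule), is the correct bookkeeping under \cref{definition:monomial-crystal}; the analogous displayed equation in the paper's proof carries the opposite sign on $\innprod{\alpha_i^\vee,\mu}$, with the surrounding inequalities compensating via further sign slips, so your version is the one that checks out line by line. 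You are also right to observe that the image of $\Phi$ is the subset $\{b_\mu\}\otimes\cM(\bR,J)$ rather than all of $\cB(\mu)\otimes\cM(\bR,J)$; this is the upper-seminormal abstract crystal that is actually fed into Joseph's decomposition theorem in \cref{theorem:truncations-are-demazure}, and the lemma's codomain phrasing should be read as shorthand for that subset.
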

\begin{proof}
	The map is defined as a consequence of \cref{lemma:boundary-factorise}, and is bijective and weight-preserving, so all that remains to be seen is the $e_i$-equivariance. Let us recall the rule (\cref{equation:ei-tensor-first}) for applying $e_i$ to a tensor product of two crystal elements:
	\begin{align}
		e_i(b_\mu \otimes p / y_\bQ)
		&=
		\begin{cases}
			e_i b_\mu \otimes p / y_\bQ & \text{if } \varphi_i(b_\mu) \geq \varepsilon_i(p / y_\bQ) \\
			b_\mu \otimes e_i(p / y_\bQ) & \text{if } \varphi_i(b_\mu) < \varepsilon_i(p / y_\bQ)
		\end{cases} \\
		&= \label{equation:ei-tensor}
		\begin{cases}
			0 & \text{if } \innprod{\alpha_i^\vee, \mu} \geq \varepsilon_i(p / y_\bQ) \\
			b_\mu \otimes e_i(p / y_\bQ) & \text{if } \innprod{\alpha_i^\vee, \mu} < \varepsilon_i(p / y_\bQ)
		\end{cases}
	\end{align}

	Fix an $i \in I$, and let $(i, k) \in \partial J$ be that unique point on the boundary of $J$ lying in column $i$. Let $p \in \cM(\bR + \bQ, J)$ be arbitrary. Since the support of $\bQ$ lies in $\partial J$, the only element of $\bQ$ in column $i$ which could have nonzero multiplicity is $(i, k)$, where it has multiplicity $\bQ[i, k] = \innprod{\alpha_i^\vee, \mu}$. Since $p[i, l] = 0$ for all $l < k$, we then have
	\begin{equation} \label{equation:epsilon-on-boundary}
		\varepsilon_i^l(p)
		=
		\begin{cases}
			0 & \text{for } l < k, \\
			\innprod{\alpha_i^\vee, \mu} + \varepsilon_i^l(p / y_\bQ) & \text{for } l \geq k.
		\end{cases}
	\end{equation}

	If $e_i(p) = 0$, then $\varepsilon_i^l(p) \geq 0$ for all $l$, and hence $\innprod{\alpha_i^\vee, \mu} \geq - \varepsilon_i^l(p/y_\bQ)$ for all $l \geq k$, and hence $\innprod{\alpha_i^\vee, \mu} \geq \max_l -\varepsilon_i^l(p / y_\bQ) = \varepsilon_i(p / y_\bQ)$. Then we are in the first case of \cref{equation:ei-tensor}, and $e_i(\Phi(p)) = 0$.

	On the other hand, if $e_i(p) = z_{i, r} p$ for some $r \geq k$, then $0 < -\varepsilon_i^r(p) = \varepsilon_i(p)$, and applying \cref{equation:epsilon-on-boundary} gives that $\innprod{\alpha_i^\vee, \mu} < -\varepsilon_i^r(p / y_\bQ) \leq \varepsilon_i(p / y_\bQ)$. So we are in the second case of \cref{equation:ei-tensor}, and all that remains to check is that $e_i(p / y_\bQ) = z_{i, r} p / y_\bQ$. However, this is clear from \cref{equation:epsilon-on-boundary} since adding a constant $\innprod{\alpha_i^\vee, \mu}$ to the values of the $\varepsilon_i^\cdot$ will not change the value $r$, as $r$ is defined as the least $l$ such that $\varepsilon_i^l(p)$ is minimised.
\end{proof}

\begin{Theorem} \label{theorem:truncations-are-demazure}
	Let $J$ be an upward-closed set containing $\bR$. Then the truncation $\cM(\bR, J)$ is a Demazure crystal.
\end{Theorem}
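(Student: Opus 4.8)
The plan is to lift the proof of \cref{theorem:inductive-character-formula} from characters to crystals, replacing its two inductive moves by their crystal-theoretic refinements \cref{lemma:truncation-extension-of-i-strings} and \cref{lemma:tensor-embedding-inductive-refined}. Throughout I mean by a \emph{Demazure crystal} a finite disjoint union of crystals $\cB_w(\lambda)$ with $\lambda \in P^+$; this is the class in which the cited theorem of \cite{josephDecompositionTheoremDemazure2003} operates. The argument is an induction on $|\bR|$. The base case $\bR = \varnothing$ is immediate, since $\cM(\varnothing, J) = \{1\} = \cB_e(0)$. For the inductive step with $\bR \neq \varnothing$, I would first reduce to a finitely generated upward-closed set: because $\cM(\bR)$ is finite, so is $N(\bR) := \bigcup_{p \in \cM(\bR)} \Supp_\bR(p)$, the truncation $\cM(\bR, J)$ depends only on $J \cap N(\bR)$, and one checks $\cM(\bR, J) = \cM(\bR, \up(J \cap N(\bR)))$; hence we may assume $J = \up(F)$ for some finite $F \supseteq \Supp\bR$.

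Next I would peel $J$ back to $K := \up(\bR) \subseteq J$. Since $J = \up(F)$ with $F$ finite, the difference $J \setminus K$ is finite (any two upward cones $\up(x)$, $\up(z)$ agree above a fixed height, so each $\up(x) \setminus \up(z)$ is finite), and listing its elements in an order refining $\leq$ from the top down yields a chain $K = K_0 \subsetneq K_1 \subsetneq \cdots \subsetneq K_m$ of upward-closed sets containing $\Supp\bR$, each adjoining one element to its predecessor, with $\cM(\bR, K_m) = \cM(\bR, J)$. By \cref{lemma:truncation-extension-of-i-strings} this gives $\cM(\bR, J) = \fD_{i_{m-1}} \cdots \fD_{i_0} \cM(\bR, K)$. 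Now $\fD_i$ carries Demazure crystals to Demazure crystals: on a single summand $\cB_w(\lambda)$ it produces $\cB_{s_i w}(\lambda)$ if $\ell(s_i w) > \ell(w)$ and returns $\cB_w(\lambda)$ otherwise (using idempotence of $\fD_i$ and the presentation $\cB_w(\lambda) = \fD_{j_1}\cdots\fD_{j_r}\{b_\lambda\}$ by an arbitrary reduced word of $w$), and it cannot merge the summands of a truncation, since each summand's source, being killed by all $e_i$ in the $e_i$-closed set $\cM(\bR, K_t)$, is a primitive element of $\cM(\bR)$ and hence the highest-weight vertex of its component, so distinct summands lie in distinct connected components of $\cM(\bR)$. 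It therefore suffices to show that $\cM(\bR, K)$ is a Demazure crystal.

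For that, observe that the minimal elements of $\Supp\bR$ lie on $\partial K$, so $\bQ := \bR|_{\partial K}$ is a nonempty sub-multiset of $\bR$ supported along $\partial K$, and \cref{lemma:tensor-embedding-inductive-refined} identifies $\cM(\bR, K)$ with the subcrystal $\{b_{\wt\bQ}\} \otimes \cM(\bR - \bQ, K)$ of $\cB(\wt\bQ) \otimes \cM(\bR - \bQ, K)$. Since $|\bR - \bQ| < |\bR|$ and $\Supp(\bR - \bQ) \subseteq \Supp\bR \subseteq K$, the inductive hypothesis makes $\cM(\bR - \bQ, K)$ a Demazure crystal, whereupon the theorem of \cite{josephDecompositionTheoremDemazure2003}, applied in the form ``$\{b\} \otimes X$'', shows $\cM(\bR, K)$ is one. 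This closes the induction.

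All the substantive inputs --- \cref{lemma:truncation-extension-of-i-strings}, \cref{lemma:tensor-embedding-inductive-refined}, and Joseph's theorem --- are already available, so the main obstacle is organisational rather than conceptual: arranging the induction on $|\bR|$ so that the moves of \cref{theorem:inductive-character-formula} are always available, carrying out the reduction to finitely generated $J$, and verifying the standard (but not one-line) fact that $\fD_i$ preserves the class of disjoint unions of Demazure crystals, which in our setting rests on truncations being $e_i$-closed and their summands occupying distinct components of $\cM(\bR)$. The only other point requiring care is matching the left-handed tensor product of \cref{lemma:tensor-embedding-inductive-refined} to the hypothesis of Joseph's theorem.
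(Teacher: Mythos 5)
Your argument is correct and matches the paper's own proof: both reduce $\cM(\bR,J)$ to the trivial crystal through the same two moves (extension of $i$-strings via \cref{lemma:truncation-extension-of-i-strings}, and tensoring on the left with a single highest-weight element via \cref{lemma:tensor-embedding-inductive-refined}), and both invoke the main theorem of \cite{josephDecompositionTheoremDemazure2003} for the tensor step. The only differences are organisational: where you reprove directly that $\fD_i$ carries disjoint unions of Demazure crystals to disjoint unions of Demazure crystals, the paper simply cites Proposition~3.2.3 of \cite{kashiwaraCrystalBaseLittelmann1993}, and where you spell out the induction on $|\bR|$ and the reduction to finitely generated $J$, the paper leaves these routine bookkeeping points implicit.
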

\begin{proof}
	We have already shown (\cref{theorem:inductive-character-formula}) that every truncation such as $\cM(\bR, J)$ can be built by starting with the trivial monomial $\{1\}$ and repeatedly applying one of
	\begin{enumerate}
		\item The extension of $i$-string operators $\fD_i$ for $i \in I$; or
		\item Multiplication by a monomial of the form $y_\bQ$, for some finite multiset $\bQ$ based in $I \ptimes \bbZ$.
	\end{enumerate}
	The trivial crystal $\{1\}$ is a Demazure crystal. It follows by \cite[Proposition 3.2.3]{kashiwaraCrystalBaseLittelmann1993} that the property of being a Demazure crystal is preserved under extension of $i$-strings, and by the main theorem of \cite{josephDecompositionTheoremDemazure2003}, the property of being a Demazure crystal is preserved under forming the tensor product with a single highest-weight element $b_\mu$ for any $\mu \in P^+$.
\end{proof}

As a Demazure crystal, the truncation $\cM(\bR, J)$ enjoys several nice properties, such as being determined up to isomorphism by its character $\ch \cM(\bR, J)$.

\begin{Remark}
	It is certainly not true that arbitrary subsets of crystals are determined (up to isomorphism as edge-labelled graphs) by their characters. For example, consider the $\fsl_2$ crystal $B(2) \oplus B(0) = \{a, b, c, d\}$, pictured in \cref{figure:sl2-example}. Both the subsets $X = \{a, d\}$ and $Y = \{a, b\}$ have character $e^{\varpi_1} + e^0$, however $Y$ is not a Demazure crystal while $X$ is.

	\begin{figure}[h]
		\centering
		\includegraphics[width=0.2\linewidth]{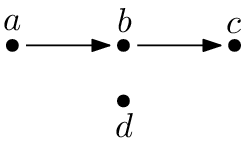}
		\caption{The $\fsl_2$-crystal $B(2) \oplus B(0)$.}
		\label{figure:sl2-example}
	\end{figure}
\end{Remark}

\begin{Corollary} \label{corollary:demazure-stabilisation}
	Since $G$ is a reductive group, $\cM(\bR)$ is a finite crystal and the Weyl group $W$ posesses a unique longest element $w_\circ$. The character of $\cM(\bR)$ may be computed as
	\begin{equation}
		\ch \cM(\bR) = \pi_{w_\circ} \ch \cM(\bR, J)
	\end{equation}
	for any upward-closed set $J$ containing $\bR$.
\end{Corollary}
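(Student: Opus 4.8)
The plan is to deduce the formula from the decomposition of $\cM(\bR)$ as a $G$-crystal, the fact (\cref{theorem:truncations-are-demazure}) that $\cM(\bR, J)$ is a Demazure crystal, and the absorbing property of $w_\circ$ in the $0$-Hecke monoid generated by the $\pi_i$.

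First, since $G$ is reductive the Weyl group $W$ is finite, so $w_\circ$ exists; and $\cM(\bR)$ is finite, being a sub-crystal of the finite crystal $\bigotimes_{(i,c)\in\bR}\cB(\varpi_i)$. As $\cM(\bR)$ is a $G$-crystal (\cref{theorem:prod-monomial-crystal-is-a-crystal}), it is a finite disjoint union $\cM(\bR) = \bigsqcup_k \cB(\lambda_k)$ with $\lambda_k \in P^+$, so $\ch\cM(\bR) = \sum_k \ch V(\lambda_k)$.

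Next I would identify the dominant weights appearing in $\cM(\bR, J)$. By \cref{theorem:truncations-are-demazure} we may write $\cM(\bR, J) = \bigsqcup_m \cB_{w_m}(\mu_m)$ as a disjoint union of Demazure crystals with $\mu_m \in P^+$, and the Demazure character formula gives $\ch\cM(\bR, J) = \sum_m \pi_{w_m} e^{\mu_m}$. I claim $\{\mu_m\} = \{\lambda_k\}$ as multisets. On one hand, each piece $\cB_{w_m}(\mu_m)$ lies inside the component $\cB(\mu_m)$ of $\cM(\bR)$ and contains its source $b_{\mu_m}$, which is the unique highest-weight element of $\cM(\bR)$ lying in that piece. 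On the other hand, every highest-weight element $b_{\lambda_k}$ of $\cM(\bR)$ lies in $\cM(\bR, J)$ by \cref{corollary:highest-weight-upward-set}, hence in exactly one piece. Since the pieces are disjoint, these two observations give a weight-preserving bijection between the pieces of $\cM(\bR, J)$ and the components of $\cM(\bR)$; reindexing, $\ch\cM(\bR, J) = \sum_k \pi_{w_k} e^{\lambda_k}$.

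Finally I would apply $\pi_{w_\circ}$. The input is the standard identity $\pi_{w_\circ}\pi_w = \pi_{w_\circ}$ for all $w\in W$, which follows from $\pi_i^2 = \pi_i$ together with the existence, for each $i$, of a reduced word for $w_\circ$ ending in $s_i$ (since $\ell(w_\circ s_i) < \ell(w_\circ)$), combined with $\pi_{w_\circ} e^\lambda = \ch V(\lambda)$ for $\lambda \in P^+$ (the Demazure character formula with $w = w_\circ$). Then
\begin{equation*}
	\pi_{w_\circ}\ch\cM(\bR, J) = \sum_k \pi_{w_\circ}\pi_{w_k}e^{\lambda_k} = \sum_k \pi_{w_\circ} e^{\lambda_k} = \sum_k \ch V(\lambda_k) = \ch\cM(\bR),
\end{equation*}
as desired. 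The only step requiring genuine care is the weight-matching between the Demazure decomposition of $\cM(\bR, J)$ and the irreducible decomposition of $\cM(\bR)$; everything else is formal. Alternatively, one can avoid the decompositions entirely: enlarge $J$ one admissible point at a time (keeping it upward-closed and containing $\bR$) until $\cM(\bR, J)$ exhausts $\cM(\bR)$, which takes finitely many steps by finiteness of $\cM(\bR)$; apply \cref{theorem:inductive-character-formula}(3) at each step to write $\ch\cM(\bR) = \pi_{i_{N}}\cdots\pi_{i_1}\ch\cM(\bR, J)$; then apply $\pi_{w_\circ}$ to this identity and use $\pi_{w_\circ}\pi_i = \pi_{w_\circ}$ together with the $W$-invariance of $\ch\cM(\bR)$ to conclude.
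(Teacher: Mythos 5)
Your main argument is correct and matches the paper's proof almost exactly: decompose $\cM(\bR, J)$ into Demazure crystals via \cref{theorem:truncations-are-demazure}, match the Demazure lowest weights to the highest weights of $\cM(\bR)$ using \cref{corollary:highest-weight-upward-set}, and apply the absorption identity $\pi_{w_\circ}\pi_w = \pi_{w_\circ}$ together with $\pi_{w_\circ}e^\lambda = \ch V(\lambda)$. The alternative route you sketch at the end (enlarge $J$ step by step until $\cM(\bR,J)$ exhausts the finite crystal $\cM(\bR)$, then apply $\pi_{w_\circ}$ and use $W$-invariance of $\ch\cM(\bR)$) is also valid and arguably cleaner, since it avoids invoking \cref{theorem:truncations-are-demazure} and needs only the inductive formula from \cref{theorem:inductive-character-formula}.
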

\begin{proof}
	By the Demazure character formula, a Demazure character is of the form $\pi_x(e^\lambda)$ for some $x \in W$ and $\lambda \in P^+$. As the $\pi_i$ operators braid and are idempotent, they define a $0$-Hecke action on $\bbZ[P]$, in other words a representation of the Hecke algebra associated to $W$ with defining relation $E_i^2 = E_i$ for all $i \in I$. The standard basis element $E_{w_\circ}$ of this Hecke algebra satisfies $E_i E_{w_\circ} = E_{w_\circ} = E_{w_\circ} E_i$ for all $i \in I$, and hence so do the Demazure operators: $\pi_{w_\circ} \pi_x = \pi_{w_\circ}$. We then have that $\pi_{w_\circ} \pi_x(e^\lambda) = \pi_{w_\circ} e^\lambda = \ch V(\lambda)$ by the Demazure character formula.

	Now, $\cM(\bR, J)$ is a disjoint sum $\bigoplus_{\lambda, w} c_{\lambda, w} \cB_w(\lambda)$ of Demazure crystals. Since every highest-weight element of $\cM(\bR)$ appears in the truncation $\cM(\bR, J)$ we have that $\cM(\bR) \cong \bigoplus_{\lambda, w} c_{\lambda, w} \cB(\lambda)$ as $G$-crystals. On the level of characters, this becomes
	\begin{equation}
		\ch \cM(\bR) = \sum_{\lambda, w} c_{\lambda, w} \ch \cB(\lambda) = \pi_{w_\circ} \sum_{\lambda, w} c_{\lambda, w} \pi_w (e^\lambda) = \pi_{w_\circ} \ch \cM(\bR, J).
	\end{equation}
\end{proof}
\section{The product monomial crystal in type A} \label{section:type-a}

We show that in type $A$, the product monomial crystal $\cM(\bR)$ is isomorphic to the crystal of a \textit{generalised Schur module} $\scS_D$, a $\GL_n$-module defined by a \textit{diagram} $D$ depending on $\bR$. The generalised Schur modules have a stable decomposition when $D$ is fixed and $n$ increases, and the character of each is given by a Demazure-type formula \cite{reinerKeyPolynomialsFlagged1995, reinerPercentageAvoidingNorthwestShapes1998}. We first show that the product monomial crystal $\cM(\bR)$ has a stable decomposition when $\bR$ is fixed and $n$ increases, and then compare characters to show that the stable decompositions of $\scS_D$ and $\cM(\bR)$ agree.

\subsection{Stability of decomposition in type A} \label{section:stability}

Throughout this section we will relate crystals of $\GL_n$ for different $n$. In order to fix notation, we give an explicit realisation of the based root datum associated to the reductive algebraic group $\GL_n$.

\begin{Definition} \label{definition:root-datum-gln}
	For $n \geq 1$, the root datum of $\GL_n$ is defined as follows:
	\begin{enumerate}
		\item The indexing set $I_n = \{1, 2, \ldots, n-1\}$,
		\item The cocharacter lattice $P^\vee_n = \bbZ\{\epsilon_1^\vee, \ldots, \epsilon_n^\vee\}$, and cocharacter lattice $P_n = \bbZ\{\epsilon_1, \ldots, \epsilon_n\}$, with pairing $\innprod{\epsilon_i^\vee, \epsilon_j} = \delta_{ij}$,
		\item The simple coroots $\alpha_i^\vee = \epsilon_i^\vee - \epsilon_{i+1}^\vee$,
		\item The simple roots $\alpha_i = \epsilon_i - \epsilon_{i+1}$,
		\item The symmetric map $(i, i) = 2$, $(i, j) = -1$ if $|i - j| = 1$ and $0$ otherwise.
			\[
				(i, j) = \begin{cases}
					2 & \text{if } i = j, \\
					-1 & \text{if } |i - j| = 1, \\
					0 & \text{otherwise}.
				\end{cases}
			\]
	\end{enumerate}
	Furthermore, we fix a system $\varpi_1, \ldots, \varpi_{n-1}$ of fundamental weights, where $\varpi_i = \epsilon_1 + \cdots + \epsilon_i$. Define $\chardet_n = \epsilon_1 + \cdots + \epsilon_n$, then $(\varpi_1, \ldots, \varpi_{n-1}, \chardet_n)$ forms a basis of $P_n$.
\end{Definition}

\begin{Remark}
	For the root datum $\GL_n$, the associated Kac-Moody algebra is $\fgl_n$, and the Weyl group is isomorphic to the symmetric group on $n$ letters, with $s_i: P_n \to P_n$ swapping $\epsilon_i$ and $\epsilon_{i + 1}$ and leaving the other $\epsilon_j$ fixed. The weight $\lambda = \sum_{i = 1}^n \lambda_i \epsilon_i$ is dominant if and only if $\lambda_1 \geq \cdots \geq \lambda_n$, and furthermore we call $\lambda$ a \textit{polynomial} weight if it is dominant and $\lambda_i \geq 0$ for all $1 \leq i \leq n$.
\end{Remark}

Let $I_n = \{1, \ldots, n-1\}$ and $I_\infty = \{1, 2, 3, \ldots\}$. For a finite multiset $\bR$ based in $I_\infty \ptimes \bbZ$, we will say that $\bR$ \textit{lives over $I_n$} if $\Supp \bR \subseteq I_n \ptimes \bbZ$. If $\bR$ lives over both $I_n$ and $I_m$, it defines a $\GL_n$ crystal $\cM(\GL_n, \bR)$ and a $\GL_m$ crystal $\cM(\GL_m, \bR)$. Given two finite multisets $\bR$ and $\bS$ living over $I_n$, we define the Nakajima monomial
\begin{equation}
	v(\GL_n, \bR, \bS)
	= y_\bR z_\bS^{-1}
	= \left( \prod_{(i, c) \in \bR} e^{\varpi_i} y_{i, c} \right)\left( \prod_{(i, k) \in \bS} \frac{\prod_{j \sim_{I_n} i} y_{j, k+1}}{y_{i, k} y_{i, k+2}} \right)
	\in \cM(\GL_n)
\end{equation}
where we have included the full formula to make it clear that the $z_\bS^{-1}$ term depends on $n$. In general we have $v(\GL_n, \bR, \bS) \neq v(\GL_m, \bR, \bS)$ because of this dependence on $n$. However, for $n \leq m$ there is a map of sets from the smaller crystal to the larger one, defined by the $\bS$-parametrisation.

\begin{Lemma}
	\label{lemma:psi-injection-map}
	Suppose that $n \leq m$ and let $\bR$ be a finite multiset living over $I_n$. Then there is an injective map of sets
	\begin{equation}
		\Psi_{n, m}: \cM(\GL_n, \bR) \to \cM(\GL_m, \bR), \quad v(\GL_n, \bR, \bS) \mapsto v(\GL_m, \bR, \bS).
	\end{equation}
	Furthermore, the image of this map is precisely
	\begin{equation}
		\Im \Psi_{n, m} = \{q \in \cM(\GL_m, \bR) \mid \Supp_\bR q \text{ lives over } I_n\}.
	\end{equation}
\end{Lemma}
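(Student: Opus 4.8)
\emph{Proof proposal.}
The plan is to split the statement into three parts: well-definedness of $\Psi_{n,m}$ (that it lands in $\cM(\GL_m,\bR)$ and not merely in $\cM(\GL_m)$), injectivity, and the description of the image. Injectivity is immediate, since the monomial $v(\GL_m,\bR,\bS)$ determines $\bS$ uniquely by linear independence of the $z_{i,k}$, and $\Psi_{n,m}$ leaves the label $\bS$ unchanged. So the real content is well-definedness together with the image computation, and both will follow from a single claim about which $\bS$-labels occur, proved by induction using the monomial-wise product structure.

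First I would record two elementary consequences of \cref{equation:p-coefficient}, valid whenever $n<m$ and both $\bR$ and $\bS$ live over $I_n$: (i) for every $j\in I_n$ the columns $v(\GL_n,\bR,\bS)[j,\cdot]$ and $v(\GL_m,\bR,\bS)[j,\cdot]$ coincide (the only term by which the two formulas could differ is $\bS[n,\cdot]$ appearing in column $n-1$, and it vanishes); and (ii) $v(\GL_m,\bR,\bS)[n,k]=\bS[n-1,k-1]\geq 0$, while $v(\GL_m,\bR,\bS)[j,\cdot]=0$ for all $j>n$. Consequence (i) shows that for $j\in I_n$ the statistics $\varphi_j,\varepsilon_j,F_j,E_j$, and hence the effect of $e_j$ and $f_j$ (each of which adds or deletes a single point of $\bS$ in column $j$), agree at $v(\GL_n,\bR,\bS)$ and at $v(\GL_m,\bR,\bS)$; consequence (ii) shows $\varepsilon_j(v(\GL_m,\bR,\bS))=0$, so $e_j$ annihilates $v(\GL_m,\bR,\bS)$, for every $j\geq n$. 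I would also note that any $p\in\cM(\GL_n,\bR)$ has its label $\bS$ living over $I_n$: writing $p$ as a monomial-wise product of elements of the fundamental subcrystals $\cM(\GL_n,i,c)$ with $(i,c)\in\bR$ and applying \cref{lemma:fundamental-s-support}, each contributing label is supported in $\down(\{(i,c-2)\})\subseteq I_n\ptimes\bbZ$.

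The heart of the argument is the claim: \emph{for $\bR,\bS$ living over $I_n$, one has $v(\GL_n,\bR,\bS)\in\cM(\GL_n,\bR)$ if and only if $v(\GL_m,\bR,\bS)\in\cM(\GL_m,\bR)$.} Since $\cM(\GL_k,\bR)=\prod_{(i,c)}\cM(\GL_k,i,c)^{\bR[i,c]}$ and $\bS$-labels add under monomial multiplication (supports taking unions), induction on $|\bR|$ peels off one point of $\bR$ at a time and reduces the claim to $\bR=\{(i,c)\}$, a single fundamental subcrystal. There $\cM(\GL_k,i,c)\cong\cB_{\GL_k}(\varpi_i)$ is connected with unique highest-weight element $y_{i,c}$, whose label is $\varnothing$; any other element has nonempty label and some nonzero $e_j$, and by (ii) such a $j$ necessarily lies in $I_n$ on the $\GL_m$-side (it lies in $I_n$ automatically on the $\GL_n$-side). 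Now induct on $|\bS|$: the base $\bS=\varnothing$ gives $y_{i,c}$, which lies in both fundamental subcrystals; for the step, choose $j\in I_n$ with $e_j$ nonzero, pass to the smaller label $\bS'$ obtained by deleting the resulting point in column $j$, apply the inductive hypothesis to $\bS'$, and invoke (i): $f_j$ is nonzero and carries the label-$\bS'$ monomial to the label-$\bS$ monomial simultaneously for $\GL_n$ and $\GL_m$ (same nonvanishing of $\varphi_j$, same value of $F_j$), so membership lifts from $\bS'$ to $\bS$ because fundamental subcrystals are closed under $f_j$.

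Granting the claim, the statement follows quickly: well-definedness is its forward direction applied to $\bS=\bS(p)$, which lives over $I_n$ by the remark above; the inclusion $\Im\Psi_{n,m}\subseteq\{q\in\cM(\GL_m,\bR)\mid\Supp_\bR q \text{ lives over }I_n\}$ holds because $\Psi_{n,m}$ preserves labels and labels of elements of $\cM(\GL_n,\bR)$ live over $I_n$ (as does $\Supp\bR$ by hypothesis); and the reverse inclusion is the claim's backward direction, since a $q\in\cM(\GL_m,\bR)$ with $\bS(q)$ living over $I_n$ has preimage $v(\GL_n,\bR,\bS(q))\in\cM(\GL_n,\bR)$. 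The step I expect to be the main obstacle is precisely the claim: $\Psi_{n,m}$ fails to commute with the crystal operators $e_j,f_j$ for $j\notin I_n$, and $\cM(\GL_n,\bR)$ is in general disconnected and contains highest-weight elements with nonempty $\bS$-label, so one cannot reduce to highest-weight elements by raising operators alone. Exploiting the monomial-wise product to break $\bR$ into single-point pieces — where the fundamental subcrystals are connected and the induction on $|\bS|$ by peeling $e_j$'s goes through — is what makes the argument work.
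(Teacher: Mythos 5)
Your proof is correct and takes essentially the same approach as the paper: compare the column statistics at $v(\GL_n,\bR,\bS)$ and $v(\GL_m,\bR,\bS)$ for $j\in I_n$, reduce to a single fundamental subcrystal via the monomial-wise product structure, and exploit connectedness of the fundamental subcrystals to propagate membership along $f_j$-strings. Your explicit observation (ii), that $\varepsilon_j$ vanishes at $v(\GL_m,\bR,\bS)$ for $j\geq n$ whenever $\bS$ lives over $I_n$, together with the induction on $|\bS|$, makes the image description fully rigorous where the paper merely asserts it as clear.
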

\begin{proof}
	The claim is trivial for $n = m$, so assume that $n < m$.
	The map is \textit{a priori} an injective map into the monomial crystal $\cM(\GL_m)$, so we need to show that the image is contained in $\cM(\GL_m, \bR)$, and the description of the image is correct.

	From the definition of the monomial crystal $\cB(\GL_n)$, we have for $p = v(\GL_n, \bR, \bS)$ that
	\begin{equation}
		\varphi_i^k(p) = \sum_{l \geq k} \bR[i, l] - \bS[i, l-2] - 2\sum_{l \geq k}\bS[i, l] + \sum_{\substack{l \geq k \\ j \sim_{I_n} i}} \bS[j, l-1].
	\end{equation}
	Only the last summand depends on $n$, and we see that for $q = v(\GL_m, \bR, \bS)$ that
	\begin{equation}
		\varphi_i^k(q) - \varphi_i^k(p)
		= \sum_{\substack{l \geq k \\ j \sim_{I_m} i \\ j \not \sim_{I_n} i}} \bS[j, l-1]
		= \begin{cases}
			0 & \text{if } i \neq n-1 \\
			\sum_{l \geq k} \bS[n, l - 1] = 0 & \text{if } i = n-1
		\end{cases}
	\end{equation}
	Since $\bS$ lives over $I_n$ we must have $\bS[n, -] = 0$ and hence $\varphi_i^k(p) = \varphi_i^k(q)$ for all $i \in I_n$. This implies that if $f_i v(\GL_n, \bR, \bS) = v(\GL_n, \bR, \bS')$ then $f_i v(\GL_m, \bR, \bS) = v(\GL_m, \bR, \bS')$ for all $i \in I_n$.

	Now consider the case of a fundamental subcrystal, where $\bR$ is a single element $\bR = \{(i, c)\}$. Clearly $\Psi_{n, m}$ sends the highest-weight element $y_\bR \in \cM(\GL_n, \bR)$ to $y_\bR \in \cM(\GL_m, \bR)$, and since $\cM(\GL_n, \bR)$ is generated by the $f_i$ for $i \in I_n$, the previous paragraph shows that $\Phi_{n, m}(\cM(\GL_n, \bR)) \subseteq \cM(\GL_m, \bR)$. Furthermore, since the crystal is connected it is clear that the subset of $\cM(\GL_m, \bR)$ of monomials living over $I_n$ is precisely those generated under only $f_i$ for $i \in I_n$, i.e. those monomials $q \in \cM(\GL_m, \bR)$ such that $\Supp_\bR(q)$ lives over $I_n$.

	The claim follows for general $\bR$ by factorisation into a product of monomials coming from various fundamental subcrystals.
\end{proof}

The image of the inclusion map $\Psi_{n, m}$ can be described purely in terms of weights, rather than monomials.
\begin{Lemma}
	\label{lemma:psi-injection-map-weights}
	Let $n \leq m$ and $\bR$ be a finite multiset living over $I_n$. Then
	\begin{equation}
		\Im \Psi_{n, m} = \{q \in \cM(\GL_m, \bR) \mid \innprod{\epsilon_i^\vee, \wt q} = 0 \text{ for all } i > n\}.
	\end{equation}
\end{Lemma}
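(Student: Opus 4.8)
The plan is to read the answer off from \cref{lemma:psi-injection-map} after a short weight computation. By that lemma, $\Im\Psi_{n,m}$ is precisely the set of $q \in \cM(\GL_m,\bR)$ for which $\Supp_\bR q$ lives over $I_n$, so it suffices to prove, for an arbitrary $q \in \cM(\GL_m,\bR)$, the equivalence
\[
	\Supp_\bR q \subseteq I_n \ptimes \bbZ \iff \innprod{\epsilon_i^\vee, \wt q} = 0 \text{ for all } i > n .
\]
I would write $q = v(\GL_m,\bR,\bS)$ for its unique $\bS$-parametrisation and introduce the column sums $r_i = \sum_c \bR[i,c]$ and $s_i = \sum_c \bS[i,c]$; both are finitely supported, and $r_i = 0$ for $i \geq n$ since $\bR$ lives over $I_n$. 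Because $\Supp_\bR q = \Supp\bR \cup \Supp\bS$ and $\Supp\bR \subseteq I_n\ptimes\bbZ$, the left-hand side of the equivalence says exactly that $s_i = 0$ for all $i \geq n$.

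The heart of the matter is to compute $\wt q$. Since $\wt$ is additive on the abelian group $\cM(\GL_m)$ and $\wt z_{i,k} = \alpha_i$, the parametrisation $q = y_\bR z_\bS^{-1}$ yields
\[
	\wt q = \wt(\bR) - \sum_i s_i \alpha_i , \qquad \wt(\bR) = \sum_i r_i \varpi_i .
\]
Pairing with $\epsilon_j^\vee$ and substituting the $\GL_m$ root datum $\varpi_i = \epsilon_1 + \cdots + \epsilon_i$ and $\alpha_i = \epsilon_i - \epsilon_{i+1}$, one gets $\innprod{\epsilon_j^\vee, \wt q} = \sum_{i \geq j} r_i - s_j + s_{j-1}$, with the convention $s_0 = 0$. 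In particular, for $j > n$ the first term vanishes (as $r_i = 0$ for $i \geq n$), so $\innprod{\epsilon_j^\vee, \wt q} = s_{j-1} - s_j$ whenever $j > n$.

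It remains to chain the two reductions together. If $s_i = 0$ for all $i \geq n$, then for every $j > n$ both $s_{j-1}$ and $s_j$ vanish, so $\innprod{\epsilon_j^\vee, \wt q} = 0$. Conversely, if $\innprod{\epsilon_j^\vee, \wt q} = 0$ for all $j > n$, then $s_{j-1} = s_j$ for every $j > n$, i.e.\ $s_n = s_{n+1} = s_{n+2} = \cdots$; since only finitely many $s_i$ are nonzero, this common value must be $0$, and hence $s_i = 0$ for all $i \geq n$. Combining this with the reduction of the first paragraph gives the equivalence, and with it the lemma. No step presents a genuine obstacle; the one point requiring care is the $\GL_m$ weight bookkeeping — deriving $\wt q = \wt(\bR) - \sum_i s_i\alpha_i$ and working throughout with the honest coordinates $\epsilon_i$ of $P_m$ rather than their classes modulo $\chardet_m$ — after which the telescoping identity $\innprod{\epsilon_j^\vee, \wt q} = s_{j-1} - s_j$ for $j > n$ makes both directions immediate.
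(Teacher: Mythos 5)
Your proof is correct and follows the same route as the paper: reduce via \cref{lemma:psi-injection-map} to showing the support condition is equivalent to the weight condition, write $\wt q = \wt\bR - \sum_i s_i\alpha_i$, and pair against coweights to recover the column sums of $\bS$. The paper pairs against cumulative coweights $\epsilon_1^\vee + \cdots + \epsilon_j^\vee$ to read off $k_j$ directly, whereas you pair against the individual $\epsilon_j^\vee$ and use the resulting telescope $\innprod{\epsilon_j^\vee,\wt q} = s_{j-1}-s_j$ together with finite support; these are the same computation in slightly different packaging, and your version spells out the tail-stabilisation step that the paper compresses into ``the same trick can be applied.''
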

\begin{proof}
	Let $p = v(\GL_n, \bR, \bS)$. Then since $\wt(p)$ is a linear combination of $\epsilon_1, \ldots, \epsilon_n$ it is certainly true that $\innprod{\epsilon_i^\vee, \wt \Psi_{n, m}(p)} = 0$ for all $i > n$. Conversely, suppose that $q = v(\GL_m, \bR, \bS)$ satisfies $\innprod{\epsilon_i^\vee, \wt q} = 0$ for all $i > n$. Writing $\wt q = \wt \bR - \sum_{i \in I_m} k_i \alpha_i$ for some integers $k_i > 0$, we have
	\begin{equation}
		0 = \innprod{\epsilon_1^\vee + \cdots + \epsilon_m^\vee, \wt q - \wt \bR} = \innprod{\epsilon_1^\vee + \cdots + \epsilon_n^\vee, \wt q - \wt \bR} = - k_n \alpha_n,
	\end{equation}
	showing that $k_n = 0$ and hence $\bS[n, -] = 0$. The same trick can be applied to show that $k_i = 0$ for all $i \geq n$, showing that $\Supp_\bR q$ lives over $I_n$. Hence the claim follows by the second statement of \cref{lemma:psi-injection-map}.
\end{proof}

If $p$ is highest-weight then so is $\Psi_{n, m}(p)$, and therefore the map $\Psi_{n, m}(p)$ restricts to an injection on the highest-weight elements of each crystal:
\begin{equation}
	\psi_{n, m}: \cM(\GL_n, \bR)^\hw \injto \cM(\GL_m, \bR)^\hw,
\end{equation}
where we use the notation $B^\hw$ for the highest-weight elements of the crystal $B$.

\begin{Lemma}
	\label{lemma:inclusions-stabilise}
	Fix a finite multiset $\bR$ based in $I_\infty \ptimes \bbZ$, let $X$ be the intersection of $\up(\bR)$ and $\down(\{(i, c-2 \mid (i, c) \in \bR)\})$, and let $n \geq 1$ be smallest such that $X$ lives over $I_n$. Then:
	\begin{enumerate}
		\item For all $m \geq n$, the inclusion $\psi_{n, m}$ is bijective and weight-equivariant, under the inclusion $P_n \injto P_m$ taking $\epsilon_i$ to $\epsilon_i$.
		\item For $k \leq n$, the image of the inclusion $\psi_{k, n}$ is described purely in terms of weights, by
			\begin{equation}
				\Im \psi_{k, n} = \{p \in \cM(\GL_n, \bR)^\hw \mid \innprod{\epsilon_i^\vee, \wt p} = 0 \text{ for all } i > k\}.
			\end{equation}
	\end{enumerate}
\end{Lemma}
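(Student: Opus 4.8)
The plan is to reduce both statements to the already-established properties of the monomial inclusions $\Psi_{n,m}$: the injectivity and the image description $\Im\Psi_{n,m} = \{q \mid \Supp_\bR q \text{ lives over } I_n\}$ from \cref{lemma:psi-injection-map}, the weight description of that image from \cref{lemma:psi-injection-map-weights}, and the fact established inside the proof of \cref{lemma:psi-injection-map} that $\Psi_{n,m}$ intertwines the $f_i$ (and hence the $e_i$) for $i \in I_n$. This last point shows that $\Psi_{n,m}(p)$ is highest-weight if and only if $p$ is, so $\psi_{n,m}$ is genuinely the restriction of $\Psi_{n,m}$ to the highest-weight loci and $\Im\psi_{n,m} = \Im\Psi_{n,m} \cap \cM(\GL_m,\bR)^\hw$ for every $n \leq m$.

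The one new ingredient is a uniform bound on the $\bR$-support of a highest-weight monomial. I would argue that if $p = y_\bR z_\bS^{-1} \in \cM(\GL_m,\bR)$ is highest-weight, then \cref{corollary:highest-weight-upward-set} gives $\Supp_\bR p \subseteq \up(\bR)$, hence $\Supp\bS \subseteq \up(\bR)$; and factoring $p$ into a product of elements of the fundamental subcrystals $\cM(i,c)^{\bR[i,c]}$ and applying \cref{lemma:fundamental-s-support} to each factor gives $\Supp\bS \subseteq \down(\{(i,c-2) \mid (i,c)\in\bR\})$, with no dependence on $m$. Therefore $\Supp_\bR p = \Supp\bR\cup\Supp\bS$ is contained in the fixed finite set $\Supp\bR\cup X$, which lives over $I_n$ by the choice of $n$. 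A small point to keep track of is that the closures $\up$ and $\down$ only shrink when computed relative to the finite Dynkin diagram $I_m$ instead of $I_\infty$, so it is harmless to take them over $I_\infty$ throughout.

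Given this, part (1) is immediate: injectivity and weight-equivariance under $P_n\injto P_m$ are inherited from $\Psi_{n,m}$ (which preserves the $\bS$-parametrisation, and $\wt(v(\GL_m,\bR,\bS)) = \wt\bR - \sum_{(i,k)\in\bS}\alpha_i$ is the image of $\wt(v(\GL_n,\bR,\bS))$ under $\epsilon_i \mapsto \epsilon_i$), while surjectivity asserts precisely that every highest-weight $q\in\cM(\GL_m,\bR)$ has $\Supp_\bR q$ living over $I_n$, which is exactly the uniform bound just proved. For part (2), the identification $\Im\psi_{k,n} = \Im\Psi_{k,n}\cap\cM(\GL_n,\bR)^\hw$ together with the weight description of $\Im\Psi_{k,n}$ in \cref{lemma:psi-injection-map-weights} produces the stated formula verbatim. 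The only delicate point is the second paragraph --- making the confinement region $\Supp\bR\cup X$ precise and tracking the Dynkin-diagram dependence of the order $\leq$; everything else is formal bookkeeping with \cref{lemma:psi-injection-map,lemma:psi-injection-map-weights}.
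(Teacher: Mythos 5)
Your proof is correct and follows essentially the same route as the paper's: derive a uniform confinement of $\Supp_\bR q$ from \cref{corollary:highest-weight-upward-set} and \cref{lemma:fundamental-s-support}, combine with the image description of $\Psi_{n,m}$ in \cref{lemma:psi-injection-map} for surjectivity, and invoke \cref{lemma:psi-injection-map-weights} for part (2). In fact you are slightly more careful than the paper in two spots: you correctly write $\Supp_\bR q \subseteq \Supp\bR \cup X$ (the paper's statement $\Supp_\bR q \subseteq X$ is too strong as written, since $\Supp\bR$ need not lie in $\down(\{(i,c-2)\mid(i,c)\in\bR\})$ and hence need not lie in $X$), and you explicitly track the Dynkin-diagram dependence of the partial order $\leq$ when passing from $I_m$-closures to $I_\infty$-closures. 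One small caveat: the claim that ``$\Psi_{n,m}$ intertwines $e_i$ for $i\in I_n$'' immediately yields ``$\Psi_{n,m}(p)$ highest-weight iff $p$ is'' is not quite complete --- the forward direction also needs $e_i(\Psi_{n,m}(p))=0$ for $i\geq n$, which requires a separate computation showing $q[i,k]\geq 0$ for $i\geq n$ (so $\varepsilon_i(q)=0$). But the paper also asserts this without proof just before the lemma when defining $\psi_{n,m}$, so you are on the same footing.
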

\begin{proof} \mbox{} 
	\begin{enumerate}
		\item If $q \in \cM(\GL_m, \bR)^\hw$, then by both \cref{lemma:fundamental-s-support} and \cref{corollary:highest-weight-upward-set} we have $\Supp_\bR(q) \subseteq X$, and hence the $\psi_{n, m}$ is surjective by the description of $\Im \Psi_{n, m}$ in \cref{lemma:psi-injection-map}. The weight-equivariance follows from the definition of $\Psi_{n, m}$.
		\item Follows from \cref{lemma:psi-injection-map-weights}.
	\end{enumerate}
\end{proof}

The previous lemma shows that the decomposition of $\cM(\GL_n, \bR)$ stabilise when $\bR$ is held fixed and $n$ is allowed to grow, with the smallest $n$ guaranteed to agree with the stable decomposition given in the statement of the lemma.

\begin{Definition} \label{definition:stable-coefficients}
	Let $\bR$ be a finite multiset living over $I_\infty$, and let $n$ be such that $\psi_{n, m}$ is bijective for all $m \geq n$. Define the decomposition multiplicities $c_\bR^\lambda \in \bbN$ by the equation
	\begin{equation}
		\cM(\GL_n, \bR) \cong \bigoplus_{\lambda} \cB(\GL_n, \lambda)^{\oplus c_\bR^\lambda},
	\end{equation}
	where the sum is over all weights $\lambda \in P^+_n$.
\end{Definition}

The fact that the constants $c_\bR^\lambda$ are well-defined is a consequence of the fist part of \cref{lemma:inclusions-stabilise}. The second part of \cref{lemma:inclusions-stabilise} gives us the following rule for using the coefficients $c_\bR^\lambda$ to decompose $\cM(\GL_n, \bR)$ when $n$ is not stable for $\bR$.

\begin{Corollary} \label{corollary:monomial-restriction-rule}
	The decomposition of $\cM(\GL_n, \bR)$ for any $n \geq 1$ is given in terms of the $c_\bR^\lambda$ as
	\begin{equation}
		\cM(\GL_n, \bR) \cong \bigoplus_{\ell(\lambda) \leq n} \cB(\GL_n, \lambda)^{\oplus c_\bR^\lambda},
	\end{equation}
	where only partitions with length at most $n$ appear in the sum.
\end{Corollary}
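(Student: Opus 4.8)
The plan is to deduce this directly from \cref{lemma:inclusions-stabilise}, using the elementary fact that a $\GL_n$-crystal which is a disjoint union of connected crystals $\cB(\GL_n,\lambda)$ is determined up to isomorphism by the multiset of weights of its highest-weight elements, i.e.\ by the numbers $\#\{p \in B^\hw \mid \wt p = \lambda\}$. By \cref{theorem:prod-monomial-crystal-is-a-crystal}, $\cM(\GL_n,\bR)$ is such a crystal, so it suffices to compute this highest-weight multiset and match it against the claimed decomposition. Fix a stable threshold $N$ for $\bR$ as in \cref{lemma:inclusions-stabilise}/\cref{definition:stable-coefficients}; by convention we let $c_\bR^\lambda = 0$ whenever $\lambda$ is a partition with $\ell(\lambda) > N$, so that $c_\bR^\lambda$ is defined for all partitions $\lambda$ and the index set $\{\ell(\lambda)\le n\}$ in the statement makes sense for every $n$.

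First I would record that every $\lambda$ with $c_\bR^\lambda \neq 0$ is a partition with $\ell(\lambda)\le N$: indeed $\cM(\GL_N,\bR)$ embeds in $\bigotimes_{(i,c)\in\bR}\cB(\GL_N,\varpi_i)$, and any constituent of a tensor product of polynomial $\GL_N$-crystals again has partition highest weight of length at most $N$. Then I would split into the two regimes $n\ge N$ and $n\le N$ (overlapping at $n=N$). For $n\ge N$, \cref{lemma:inclusions-stabilise}(1) says $\psi_{N,n}$ is a weight-equivariant bijection $\cM(\GL_N,\bR)^\hw \to \cM(\GL_n,\bR)^\hw$ under the inclusion $P_N\injto P_n$; hence the highest-weight multiset of $\cM(\GL_n,\bR)$ coincides with that of $\cM(\GL_N,\bR)$, so each partition $\lambda$ (necessarily with $\ell(\lambda)\le N\le n$) occurs with multiplicity $c_\bR^\lambda$, giving $\cM(\GL_n,\bR)\cong\bigoplus_{\ell(\lambda)\le n}\cB(\GL_n,\lambda)^{\oplus c_\bR^\lambda}$.

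For $n<N$, I would apply \cref{lemma:inclusions-stabilise}(2) with $k=n$: the injective, weight-equivariant map $\psi_{n,N}$ identifies $\cM(\GL_n,\bR)^\hw$ with $\{p\in\cM(\GL_N,\bR)^\hw \mid \innprod{\epsilon_i^\vee,\wt p}=0 \text{ for all } i>n\}$. If $p$ lies in this set and $\lambda=\wt p$, then $\lambda$ is dominant for $\GL_N$ with $\lambda_{n+1}=\cdots=\lambda_N=0$, and dominance forces $\lambda_1\ge\cdots\ge\lambda_n\ge 0$, so $\lambda$ is a partition with $\ell(\lambda)\le n$; such $p$ occur with multiplicity $c_\bR^\lambda$. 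Transporting along the weight-equivariant bijection, the highest-weight multiset of $\cM(\GL_n,\bR)$ consists of exactly the partitions $\lambda$ with $\ell(\lambda)\le n$, each with multiplicity $c_\bR^\lambda$, which is the claimed decomposition.

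I do not expect a serious obstacle: all the content sits in \cref{lemma:inclusions-stabilise}. The only points requiring care are the bookkeeping of weights under the lattice inclusions $P_k\injto P_m$ (already built into the statements of \cref{lemma:psi-injection-map} and \cref{lemma:inclusions-stabilise}), and the short observation that, for a dominant weight, the crude vanishing condition $\innprod{\epsilon_i^\vee,\wt p}=0$ for $i>n$ is equivalent to being a partition of length at most $n$; the mildest subtlety is fixing the convention for $c_\bR^\lambda$ at partitions longer than the stable range, as noted above.
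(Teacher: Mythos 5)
Your proposal is correct and follows exactly the route the paper intends: the paper gives no explicit proof of this corollary, only pointing to the two parts of \cref{lemma:inclusions-stabilise}, and your argument fills in precisely those details — part (1) settles the stable regime $n\ge N$ by a weight-equivariant bijection on highest-weight elements, and part (2) settles $n<N$ via the weight-vanishing description of $\Im\psi_{n,N}$. The small supporting observations you add (that the highest weights are honest partitions of length $\le N$, via the embedding into a tensor product of fundamental crystals, and that for a $\GL_N$-dominant weight the condition $\innprod{\epsilon_i^\vee,\lambda}=0$ for all $i>n$ is equivalent to being a partition of length $\le n$) are exactly the right things to check and are both correct.
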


Recall that a weight $\lambda = \lambda_1 \epsilon_1 + \cdots + \lambda_n \epsilon_n$ of $\GL_n$ is called \defn{polynomial} if $\lambda_i \geq 0$ for all $1 \leq i \leq n$, and is both dominant and polynomial if and only if $\lambda_1 \geq \cdots \geq \lambda_n \geq 0$, or in other words if $(\lambda_1, \ldots, \lambda_n)$ is a \defn{partition} with at most $n$ parts. Let $\Part_n$ denote the set of partitions with at most $n$ parts, $\Part = \bigcup_{n \geq 0} \Part_n$ denote the set of all partitions. We briefly remind the reader how to go between partitions and weights of $\GL_n$.

A partition $\lambda = (\lambda_1 \geq \cdots \geq \lambda_k > 0)$ is a weakly decreasing list of positive integers, and the \textit{length} of the partition $\lambda$ is $\ell(\lambda) = k$, the length of the list. We draw partitions as Young diagrams using English notation, so that for example $\lambda = (4, 3, 1, 1)$ is represented as the diagram
\begin{equation}
	\lambda = \ydiagram{4, 3, 1, 1}.
\end{equation}
A partition $\lambda$ of length at most $n$ may be interpreted as a dominant weight of $\GL_n$, by taking $\langle \lambda, \alpha_i^\vee \rangle$ to be the number of columns of length $i$ for each $1 \leq i \leq n - 1$, and taking the number of columns of length $n$ to be the multiplicity of the determinant. For example, the same partition $\lambda = (4, 3, 1, 1)$ would represent the weight $\varpi_1 + 2 \varpi_2 + \chardet$ of $\GL_4$.

\begin{Example}
	We give a worked example of starting from a finite multiset $\bR$, determining the stable coefficients $c_\bR^\lambda$, and specialising those stable coefficients to any $\GL_n$.
	
	Let $\bR = \{(1, 5), (3, 1), (4, 6)\}$. The set $\up(\bR) \cap \down(\{(i, c-2) \mid (i, c) \in \bR\})$ are depicted as the green shaded regions in \cref{figure:example-compute-character-from-stable}, showing the smallest stable $n$ for this particular multiset is $n = 6$.

	Using a computer, we determine the decomposition of $\cM(\GL_6, \bR)$ to be
	\begin{equation}
		\begin{aligned}
			\cM(\GL_6, \bR)
			&\cong
			\cB(2 \varpi_4) \oplus \cB(\varpi_3 + \varpi_5) \oplus \cB(\varpi_2 + \chardet_6) \oplus \\
			&
			\cB(\varpi_1 + \varpi_3 + \varpi_4) \oplus \cB(\varpi_1 + \varpi_2 + \varpi_5) \oplus \cB(2 \varpi_1 + \chardet_6).
		\end{aligned}
	\end{equation}
	In terms of partitions, the $\lambda$ for which $c_\bR^\lambda = 1$ are
	\begin{equation}
		\ydiagram{2, 2, 2, 2}
		\quad
		\ydiagram{2, 2, 2, 1, 1}
		\quad
		\ydiagram{2, 2, 1, 1, 1, 1}
		\quad
		\ydiagram{3, 2, 2, 1}
		\quad
		\ydiagram{3, 2, 1, 1, 1}
		\quad
		\ydiagram{3, 1, 1, 1, 1, 1},
	\end{equation}
	and $c_\bR^\lambda = 0$ for all other partitions $\lambda$.

	Since $\bR$ lives over $I_5$ it makes sense to specialise to $\cM(\GL_5, \bR)$, whose decomposition we can compute from the stable coefficients $c_\bR^\lambda$ by \cref{corollary:monomial-restriction-rule}: we simply need to throw away the two partitions whose length is more than $5$. In terms of fundamental weights, we obtain the decomposition
	\begin{equation}
		\cM(\GL_5, \bR) \cong \cB(2 \varpi_4) \oplus \cB(\varpi_3 + \chardet_5) \oplus \cB(\varpi_1 + \varpi_3 + \varpi_4) \oplus \cB(\varpi_1 + \varpi_2 + \chardet_5).
	\end{equation}

	\begin{figure}[ht]
		\centering
		\includegraphics[width=0.7\linewidth]{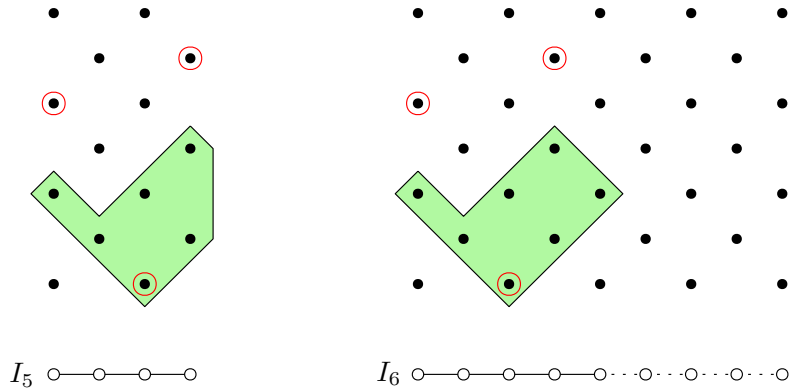}
		\caption{An illustration of the sets $X \cap I_5 \ptimes \bbZ$ and $X$ of \cref{lemma:inclusions-stabilise} for the multiset $\bR = \{(1, 5), (3, 1), (4, 6)\}$. The smallest stable $n$ is $n = 6$.}
		\label{figure:example-compute-character-from-stable}
	\end{figure}
\end{Example}

\begin{Remark}
	Most of the above discussion of stability applies to the family $D_n$ for $n \geq 4$, with the Dynkin diagram labelled as \[ \includegraphics[width=0.3\linewidth]{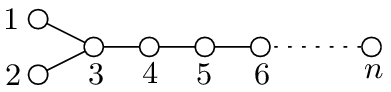} \]
	However, we do not know the correct analogue of generalised Schur modules in type $D_n$, so we have not treated this case here explicitly.
\end{Remark}

\subsection{Specht modules associated to arbitrary diagrams} \label{section:definition-generalised-specht}

Let $D \subseteq \bbN \times \bbN$ be a finite subset with cardinality $d$, which we call a \textit{diagram} with $d$ boxes. A bijection $T: D \to \{1, \ldots, d\}$ is called a \textit{tableau} of $D$, and the symmetric group $\fS_d$ acts on the set of tableaux by postcomposition: $\sigma \cdot T = \sigma \circ T$. A choice of tableau $T$ defines two subgroups of the symmetric group $\fS_d$, the \textit{row stabilising} subgroup $R_T$ which permute the entries of $T$ within their rows, and the \textit{column stabilising} subgroup $C_T$ which permute the entries of $T$ within their columns. Using these two subgroups we may define the \textit{Young symmetriser}  $y_T = \sum_{r \in R_T, c \in C_T} (-1)^r rc$, a pseudo-idempotent element of the group algebra $\bbC[\fS_d]$. The left submodule $\Sigma_T = \bbC[\fS_D] y_T$ is called the \textit{generalised Specht module} associated to the diagram $D$ and tableau $T$. A different choice of $T$ gives an isomorphic module, and so we define $\Sigma_D \cong \Sigma_T$ for any tableau $T$ of $D$, noting $\Sigma_D$ is only defined up to isomorphism.

When $D$ is the Young diagram corresponding to a partition $\lambda$ of $d$, $\Sigma_\lambda$ is called a \textit{Specht module}, and is irreducible. Furthermore, as $\lambda$ runs through the partitions of $d$, the $\Sigma_\lambda$ give a complete set of irreducible modules for $\bbC[\fS_d]$. The decomposition numbers $c_D^\lambda := [\Sigma_\lambda : \Sigma_D]$ are called \textit{generalised Littlewood-Richardson coefficients}, and are invariant under row and column permutations of the diagram $D$.

\begin{Example} \label{example:diagram-becomes-skew}
	Let $D = \{(1, 1), (2, 2), (3, 2), (2, 3), (4, 3)\} \subseteq \bbZ \times \bbZ$. This diagram, along with a tableau $T: D \to \{1, 2, 3, 4, 5\}$ is pictured in \cref{figure:diagram-example}. The row-stabilising subgroup $R_T$ is generated by the transposition $(24)$, while the column-stabilising subgroup $C_T$ is generated by $(23)$ and $(45)$. The diagram $D$ may be made into a skew diagram $D'$ by applying the permutation $(234)$ to the rows, followed by $(132)$ to the columns. We then use the theory of skew Schur functions to compute that $\Sigma_D \cong \Sigma_{D'} \cong \Sigma_{(2, 1, 1, 1)} \oplus \Sigma_{(3, 1, 1)} \oplus \Sigma_{(2, 2, 1)}^{\oplus 2} \oplus \Sigma_{(3, 2)}$. For example, the generalised Littlewood-Richardson coefficient $c_D^{(2, 2, 1)} = 2$.

	\begin{figure}[ht]
		\centering
		\includegraphics[width=0.7\linewidth]{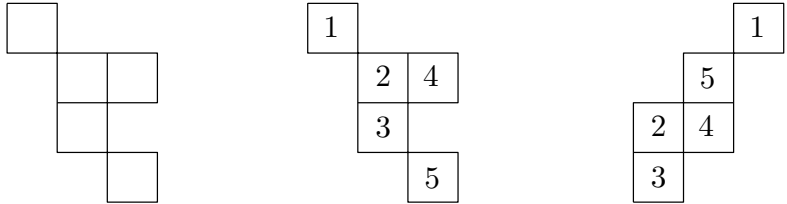}
		\caption{On the left is a diagram $D = \{(1, 1), (2, 2), (3, 2), (2, 3), (4, 3)\}$, pictured as a collection of squares in the plane. We index positions in the plane like matrices, so that the first coordinate goes down the page, and the second coordinate goes right along the page. In the middle is a tableau $T: D \to \{1, \ldots, 5\}$, and on the right is a rearrangement of the rows and columns of $D$ to a second diagram $D'$ which is skew, of shape $(3, 2, 2, 1) / (2, 1)$.}
		\label{figure:diagram-example}
	\end{figure}
\end{Example}

\begin{Remark} \label{remark:non-skew}
	Not all diagrams can be made into a skew shape via row and column permutations, even if we restrict to only row and column permutations. One such example is the diagram
	\[D = \begin{ytableau}
		\none & &      & \none & \none \\
		\none & & \none &       & \none \\
		\none & & \none & \none &       \\
	\end{ytableau}\]
	which has the decomposition $\Sigma_D \cong \Sigma_{(4, 1, 1)} \oplus \Sigma_{(3, 2, 1)}^{\oplus 2} \oplus \Sigma_{(2, 2, 2)}$. (This decomposition is computed in \cref{example:partition-sequence-non-skew}).
\end{Remark}

\subsection{Schur modules associated to arbitrary diagrams} \label{section:definition-generalised-schur}

Let $V$ be a finite-dimensional $\bbC$-vector space, and $D$ be a diagram with $d$ boxes. The tensor power $V^{\otimes d}$ is naturally a $(\GL(V), \fS_d)$ bimodule, and we define the \textit{generalised Schur module} to be (up to isomorphism) the left $\GL(V)$-module $\scS_D(V) \cong V^{\otimes d} \otimes_{\bbC[\fS_d]} \Sigma_D$. As a consequence of Schur-Weyl duality, the generalised Schur module decomposes as $\scS_D(V) \cong \bigoplus_{\ell(\lambda) \leq \dim V} c_D^\lambda \scS_\lambda(V)$. The restriction on partitions having length at most $\dim V$ is not strictly necessary, since in this case we would have $\scS_\lambda(V) = 0$.

The main theorem to be shown is that in type $A$, the product monomial crystal is always a crystal of a generalised Schur module. 

\subsection{Schur and Flagged Schur modules}

In order to study the characters of the Schur module $\scS_D(V)$, it is convenient to introduce a more concrete definition of the Schur module (note that our previous definition was only up to isomorphism), as well as a quotient of the Schur module, called the \textit{flagged Schur module}. While the Schur module is a module for the whole of $\GL(V)$, the flagged Schur module will be a module for a Borel subgroup of $\GL(V)$, and will only be defined when $\dim V$ is large enough compared to $D$. In the following discussion, we adopt the definitions from \cite{reinerFlaggedWeylModules1999}.

For a diagram $D$, let $\operatorname{col}_j(D) \subseteq D$ denote the subset of boxes in column $j$, and $\operatorname{row}_i(D) \subseteq D$ denote the subset of boxes in row $i$. Let $\bigwedge^k(V)$, $T^k(V)$, and $S^k(V)$ denote the exterior, tensor, and symmetric algebras of degree $k$ of $V$. We define the map $\psi_D$ as the composition
\begin{equation}
	\bigotimes_j \bigwedge^{\operatorname{col}_j(D)}(V) \xto{\Delta \otimes \cdots \otimes \Delta}
	\bigotimes_j T^{\operatorname{col}_j(D)}(V) \to
	\bigotimes_i T^{\operatorname{row}_i(D)}(V) \xto{m \otimes \cdots \otimes m}
	\bigotimes_i S^{\operatorname{row}_i(D)}(V),
\end{equation}
where the first map is comultiplication in each exterior algebra, the second map is the natural rearrangement, and the third map is multiplication in the symmetric algebra. The Schur module $\scS_D(V)$ is defined as the $\GL(V)$-submodule $\Im \psi_D$. Note that rearranging the columns of $D$ leaves $\scS_D(V)$ invariant, while rearranging the rows of $D$ yields a different (but isomorphic) Schur module.

Now, suppose that the diagram $D$ satisfies $D \subseteq \{1, \ldots, r\} \times \bbN$ (the diagram fits within rows $1$ through $r$), and $n = \dim V \geq r$. Fix a full flag of quotient spaces $V_\bullet = (V_n \to V_{n-1} \to \cdots \to V_1)$, by which we mean that $\dim V_i = i$ and each map $V_i \to V_{i-1}$ is surjective. We may postcompose $\psi_D$ with the projection
\begin{equation}
	\phi_D: \bigotimes_i S^{\operatorname{row}_i(D)}(V) \to \bigotimes_i S^{\operatorname{row}_i(D)}(V_i),
\end{equation}
and define the \textit{flagged Schur module} $\scS_D^\flag(V_\bullet) = \Im (\phi_D \circ \psi_D)$. Let $B(V_\bullet) \subseteq \GL(V)$ is the subgroup fixing the flag of quotients, then $\scS_D^\flag(V_\bullet)$ will be a $B(V_\bullet)$ module, but rarely a $\GL(V)$ module. Note that again, the module $\scS_D^\flag(V_\bullet)$ is unchanged under column permutations of $D$, but is no longer invariant under row permutations. This construction (and its dual, the Weyl and flagged Weyl modules) are given in full detail in \cite{reinerFlaggedWeylModules1999}, sections 2 and 5.

\begin{Example}
	Let $\lambda$ be a partition with at most $r$ rows, and $D$ be its Young diagram, placed so that the longest row of $\lambda$ is in the first row of $D$. Then $\scS_D(\bbC^r) \cong V(\lambda)$, the irreducible $\GL_r$-module of highest weight $\lambda$, and $\scS_D^\flag(\bbC^r \to \bbC^{r-1} \to \cdots \to \bbC^1) \cong V(\lambda)_\lambda$, the highest-weight space. If the diagram is placed upside-down, so that the longest row of $\lambda$ is in row $r$, then both the Schur and flagged Schur modules are isomorphic to $V(\lambda)$.
\end{Example}

Many of the results known about the generalised Schur modules $\scS_D(V)$ are due to geometric constructions of this module as sections of a line bundle over a (generally singular) variety, in \cite{magyarBorelWeilTheorem1998, magyarSchubertPolynomialsBottSamelson1998}. In this setting, the flagged Schur module (or its dual, the flagged Weyl module) naturally arise, and in \cite{reinerPercentageAvoidingNorthwestShapes1998, reinerKeyPolynomialsFlagged1995}, a Demazure-type character formula is given for the characters of the flagged Schur modules of \textit{percentage-avoiding} diagrams $D$. Fortunately, the diagrams we will encounter are \textit{northwest}, which are automatically percentage-avoiding, and so these results apply. (A diagram $D$ is \textit{northwest} if whenever $(j, k), (i, l) \in D$ with $i<j$ and $k<l$, then $(i, k) \in D$).

\subsection{Diagrams and multisets defined by partition sequences}

It is quite awkward to directly state the map from a multiset $\bR$ to a corresponding diagram $D$. Instead, we will define each of $\bR$ and $D$ from a common partition sequence.

\begin{Definition} \label{definition:partition-sequence}
	A \textit{partition sequence of length $r$} is a sequence $\underline{\lambda} = (\lambda^{(1)}, \ldots, \lambda^{(r)})$ of partitions, such that $\ell(\lambda^{(i)}) \leq i$. For each $0 \leq i \leq r$, let $\underline{\lambda}^i = (\lambda^{(1)}, \ldots, \lambda^{(i)})$ denote the prefix of $\underline{\lambda}$ of length $i$.
\end{Definition}

\begin{Definition}
	Let $\underline{\lambda}$ be a partition sequence of length $r$. The \textit{diagram $D(\underline{\lambda})$ associated to $\underline{\lambda}$} is defined inductively as follows:
	\begin{enumerate}
		\item For $i = 0$, $D(\underline{\lambda}^0) = \varnothing$, the empty diagram.
		\item For $i > 0$, $D(\underline{\lambda}^i)$ is obtained from $D(\underline{\lambda}^{i-1})$ by shifting the contents of $D(\underline{\lambda}^{i-1})$ down one row, and placing the Young diagram of $\lambda^{(i)}$ to the right of the previous diagram, with the longest row of $\lambda^{(i)}$ in row 1.
	\end{enumerate}
\end{Definition}

\begin{Example} \label{example:partition-sequence-to-diagram}
	Given the partition sequence $\underline{\lambda} = (\varnothing, (1, 1), (2, 1), (1, 1, 1, 1), (2, 1, 1))$, we obtain the sequence of diagrams $D(\underline{\lambda}^0) = D(\underline{\lambda}^1) = \varnothing$, then $D(\underline{\lambda}^2), \ldots, D(\underline{\lambda}^5)$ are given by 

\[
\begin{aligned}
D(\underline{\lambda}^2)
=&
\begin{ytableau}
		\none[1] & \none &  \\
		\none[2] & \none &   
\end{ytableau}
&
D(\underline{\lambda}^3)
=&
\begin{ytableau}
		\none[1] & \none & \none   &          &        \\
		\none[2] & \none &  &          & \none  \\
		\none[3] & \none &  & \none    & \none   
\end{ytableau}
\\
\\
D(\underline{\lambda}^4)
=&
\begin{ytableau}
		\none[1] & \none & \none   & \none    & \none    &  \\
		\none[2] & \none & \none   &          &          & \\
		\none[3] & \none &  &          & \none    & \\
		\none[4] & \none &  & \none    & \none    &  
\end{ytableau}
&
D(\underline{\lambda}^5)
=&
\begin{ytableau}
		\none[1] & \none & \none   & \none    & \none    & \none   &       &      \\
		\none[2] & \none & \none   & \none    & \none    &  &       & \none \\
		\none[3] & \none & \none   &          &          &  &       & \none \\
		\none[4] & \none &  &          & \none    &  & \none\\
		\none[5] & \none &  & \none    & \none    &  & \none
\end{ytableau}
\end{aligned}
\]
\end{Example}

\begin{Remark} \label{remark:column-convex}
	All of the results of \cite{reinerPercentageAvoidingNorthwestShapes1998} apply to the class of \textit{column-convex diagrams}, which are diagrams where the columns have no gaps: if $(i_1, j) \in D$ and $(i_2, j) \in D$ for $i_1 < i_2$, then all of $(i_1, j), (i_1 + 1, j), \ldots, (i_2, j) \in D$. We note that diagrams of the form $D(\underline{\lambda})$ are always column-convex, and conversely that every column-convex diagram $D$ is of the form $D(\underline{\lambda})$ after applying a column permutation.
\end{Remark}

\begin{Lemma} \label{lemma:character-of-partition-sequence-diagram}
	Let $\underline{\lambda}$ be a partition sequence of length $r$, and let $V$ be a vector space of dimension $n \geq r$, with a fixed full quotient flag $V_\bullet$. Then the characters of the flagged Schur modules $\scS_{D(\underline{\lambda}^i)}^\flag(V_\bullet)$ satisfy the following recurrence:
	\begin{enumerate}
		\item For $i = 0$, $\ch \scS_{D(\underline{\lambda}^0)}^\flag(V_\bullet) = 1$.
		\item For $i > 0$, $\ch \scS_{D(\underline{\lambda}^i)}^\flag(V_\bullet) = e^{\lambda^{(i)}} \cdot \pi_1 \cdots \pi_{i-1} (\ch \scS_{D(\underline{\lambda}^{i-1})}^\flag(V_\bullet))$ as $\GL(V)$ characters.
	\end{enumerate}
\end{Lemma}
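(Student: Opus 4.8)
The plan is to prove the recurrence by induction on $i$, exploiting the inductive construction of $D(\underline{\lambda}^i)$ together with known structural properties of flagged Schur modules. Since $D(\underline{\lambda}^i)$ is column-convex, the results of \cite{reinerPercentageAvoidingNorthwestShapes1998} apply to it (\cref{remark:column-convex}), and the flagged Schur module $\scS_D^\flag(V_\bullet)$ is unchanged under column permutations of $D$, so we may freely rearrange columns. The base case $i = 0$ is immediate: $D(\underline{\lambda}^0) = \varnothing$, so $\psi_{\varnothing}$ and $\phi_{\varnothing}$ are identity maps on the empty tensor product $\bbC$, and hence $\ch \scS_{D(\underline{\lambda}^0)}^\flag(V_\bullet) = 1$.

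For the inductive step I would use that, by the inductive definition of $D(\underline{\lambda})$, the diagram $D(\underline{\lambda}^i)$ is the disjoint union of $D(\underline{\lambda}^{i-1})$ shifted down by one row and the Young diagram of $\lambda^{(i)}$ placed in a fresh block of columns at the right, top-justified in rows $1, \ldots, \ell(\lambda^{(i)})$. I analyse this as the composite of two elementary moves, first (A) shift the whole of $D(\underline{\lambda}^{i-1})$ down one row, then (B) adjoin $\lambda^{(i)}$. I claim move (B) multiplies the flagged character by $e^{\lambda^{(i)}}$: a column of $\lambda^{(i)}$ of height $h$ occupies rows $1, \ldots, h$, so after applying $\phi_D$ the requirement that the row-$k$ factor lie in $V_k$, together with the antisymmetrisation coming from $\bigwedge^h V$, forces that column to contribute the single rigid tensor $e_1 \otimes \cdots \otimes e_h$ of weight $\varpi_h$; because the new columns carry fresh column indices, multiplication by the resulting fixed monomials in the symmetric algebras $S(V_k)$ is injective, so the image of $\phi_D \circ \psi_D$ factors and the character picks up the factor $\prod_j e^{\varpi_{(\lambda^{(i)})'_j}} = e^{\sum_j \varpi_{(\lambda^{(i)})'_j}} = e^{\lambda^{(i)}}$, using the identity $\sum_j \varpi_{\mu'_j} = \mu$ for a partition $\mu$ with conjugate $\mu'$. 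I claim move (A) applies $\pi_1 \pi_2 \cdots \pi_{i-1}$: more generally, shifting a column-convex diagram contained in rows $1, \ldots, s$ down one row is implemented on characters by $\pi_1 \cdots \pi_s$, which I would extract from the Reiner--Shimozono character formula (equivalently, it is visible from the Bott--Samelson/configuration-variety realisation of $\scS_D^\flag$, where the shift fibres the variety over $s$ further $\bbP^1$-bundles); since $D(\underline{\lambda}^{i-1}) \subseteq \{1, \ldots, i-1\} \times \bbN$, this gives $\pi_1 \cdots \pi_{i-1}$. Performing (A) then (B) yields exactly $\ch \scS_{D(\underline{\lambda}^i)}^\flag(V_\bullet) = e^{\lambda^{(i)}} \cdot \pi_1 \cdots \pi_{i-1}\!\left(\ch \scS_{D(\underline{\lambda}^{i-1})}^\flag(V_\bullet)\right)$.

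The main obstacle is making move (A) precise, i.e.\ verifying that a one-row downward shift of a diagram in rows $1, \ldots, s$ corresponds on characters to the specific word $\pi_1 \pi_2 \cdots \pi_s$; I expect to handle this by quoting the Demazure-operator form of the character formula for percentage-avoiding flagged Schur modules \cite{reinerPercentageAvoidingNorthwestShapes1998, reinerKeyPolynomialsFlagged1995, reinerFlaggedWeylModules1999}, read column-by-column from the right, and checking that on $D(\underline{\lambda}^i)$ it telescopes as predicted --- the rightmost columns (those of $\lambda^{(i)}$) contributing the monomial $e^{\lambda^{(i)}}$ and no Demazure operators, the remaining (shifted) columns contributing the extra staircase $\pi_1 \cdots \pi_{i-1}$ and then reproducing the level-$(i-1)$ formula. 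A secondary point to check is that the overlap in rows $2, \ldots, \ell(\lambda^{(i)})$ between the new block and the already-shifted copy of $D(\underline{\lambda}^{i-1})$ does no harm: this holds because the new block's columns are complete and top-justified, so the rigidity argument in move (B) is unaffected by additional boxes lying below them in those rows, and only the injectivity of multiplication by a fixed monomial in each $S(V_k)$ is used. Once move (A) is pinned down, combining it with move (B) closes the induction.
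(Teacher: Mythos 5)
Your proposal is correct and follows essentially the same route as the paper: both reduce the inductive step to Reiner--Shimozono's Theorem 23, viewing the passage from $D(\underline{\lambda}^{i-1})$ to $D(\underline{\lambda}^{i})$ as a downward row-shift (realised by the successive row swaps $s_{(i-1,i)}, \ldots, s_{(1,2)}$, i.e.\ the composite $\pi_1 \cdots \pi_{i-1}$) followed by adjoining the top-justified block $\lambda^{(i)}$ (contributing the factor $e^{\lambda^{(i)}}$). Your proposal simply spells out in more detail the two moves that the paper's one-line proof delegates to the cited theorem.
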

\begin{proof}
	The case of $i = 0$ is clear. The inductive case follows from \cite[Theorem 23]{reinerPercentageAvoidingNorthwestShapes1998}, noting moving the diagram $D(\underline{\lambda}^{i-1})$ down one row can be done by the successive row permutations $s_{(i-1, i)}, \ldots, s_{(1, 2)}$, corresponding to the application of Demazure operators $\pi_{i-1}, \ldots, \pi_1$.
\end{proof}

\begin{Example} \label{example:partition-sequence-non-skew}
	Recall the diagram $D$ from \cref{remark:non-skew}:
	\[D = \begin{ytableau}
		\none & &      & \none & \none \\
		\none & & \none &       & \none \\
		\none & & \none & \none &       \\
	\end{ytableau}\]
	which (after sorting columns) corresponds to the partition sequence $\underline{\lambda} = ((1), (1), (2, 1, 1))$. We get the sequence of diagrams (where we have expanded out the row-swapping steps)
	\[
		\begin{ytableau}
			\none[1] & &      & \none & \none \\
			\none[2] & & \none &       & \none \\
			\none[3] & & \none & \none &       \\
		\end{ytableau}
		\to
		\begin{ytableau}
			\none & \none & \none \\
			\none &       & \none \\
			\none & \none &       \\
		\end{ytableau}
		\to
		\begin{ytableau}
			\none &       & \none \\
			\none & \none & \none \\
			\none & \none &       \\
		\end{ytableau}
		\to
		\begin{ytableau}
			\none &       & \none \\
			\none & \none & \\
			\none & \none \\
		\end{ytableau}
		\to
		\begin{ytableau}
			\none & \none \\
			\none &  \\
			\none \\
		\end{ytableau}
		\to
		\begin{ytableau}
			\none & \\
			\none \\
			\none \\
		\end{ytableau}
		\to
		\varnothing
	\]
	which gives the character formula
	\begin{equation}
		\ch \scS_D^\flag(\bbC^3) = e^{(2, 1, 1)} \cdot \pi_1 \pi_2 (e^{(1)} \cdot \pi_1 (e^{(1)} \cdot 1))
	\end{equation}
	which we compute to be (writing $x_1 \cdots x_i = e^{\varpi_i}$, and using the shorthand $x_1^a x_2^b x_3^c = x^{abc}$)
	\begin{equation}
		\ch \scS_D^\flag(\bbC^3) = x^{411} + x^{231} + 2x^{321} + x^{312} + x^{222}
	\end{equation}
	which decomposes as a sum of Demazure characters
	\begin{equation}
		\ch \scS_D^\flag(\bbC^3) = \kappa^{411} + \kappa^{231} + \kappa^{312} + \kappa^{222}
	\end{equation}
	showing the claimed decomposition $\Sigma_D \cong \Sigma_{(4, 1, 1)} \oplus \Sigma_{(3, 2, 1)}^{\oplus 2} \oplus \Sigma_{(2, 2, 2)}$.
\end{Example}

\begin{Definition} \label{definition:partition-sequence-to-multiset}
	Let $\underline{\lambda}$ be a partition sequence of length $r$. Define the \textit{multiset $\bR(\underline{\lambda})$ associated to $\underline{\lambda}$} and the \textit{upward-closed set $J(\underline{\lambda})$ associated to $\lambda$} inductively as follows:
	\begin{enumerate}
		\item For $i = 0$, $\bR(\underline{\lambda}^0) = \varnothing$, and let $J(\underline{\lambda}^0)$ be the complement of the downward-closed set generated by $(1, -1)$.
		\item For $i > 0$, let $J(\underline{\lambda}^i)$ be the union of $J(\underline{\lambda}^{i-1})$ with the upward-closed set generated by $(1, -2i + 1)$, and let $\bR(\underline{\lambda}^i) - \bR(\underline{\lambda}^{i - 1})$ be supported on $J(\underline{\lambda}^i) \setminus J(\underline{\lambda}^{i-1})$ and have weight $\lambda^{(i)}$.
	\end{enumerate}
\end{Definition}

\begin{Example} \label{example:partitions-sequence-to-multiset}
	As in \cref{example:partition-sequence-to-diagram}, let $\underline{\lambda} = (\varnothing, (1, 1), (2, 1), (1, 1, 1, 1), (2, 1, 1))$. As a sequence of weights, expressed in terms of the fundamental weights, we would instead have $(0, \varpi_2, \varpi_1 + \varpi_2, \varpi_4, \varpi_1 + \varpi_3)$. The multiset $\bR(\underline{\lambda}^5)$ is shown in \cref{figure:partition-sequence-to-multiset} as the red circled points, with the differences in the truncations $J_i = J(\underline{\lambda}^i)$ also shown.
	\begin{figure}[ht]
		\centering
		\includegraphics[width=0.7\linewidth]{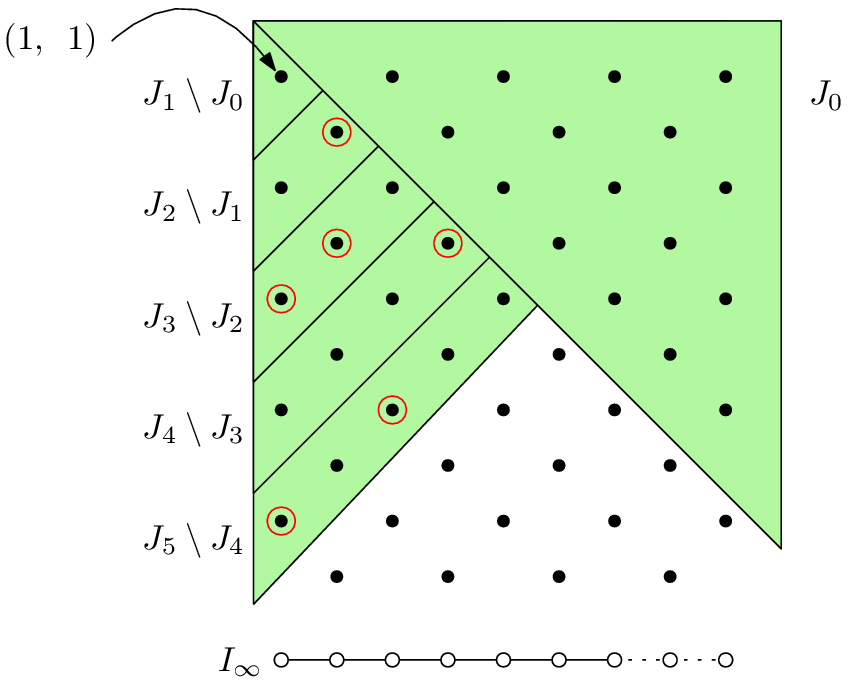}
		\caption{An illustration of \cref{example:partitions-sequence-to-multiset}.}
		\label{figure:partition-sequence-to-multiset}
	\end{figure}
\end{Example}

\begin{Remark}
	If $\underline{\lambda}$ is a partition sequence of length $r$, then $\bR(\underline{\lambda})$ is stable for $\GL_r$ and hence $\cM(\GL_r, \underline{\lambda})$ may be used to compute the stable coefficients $c_\bR^\lambda$ of \cref{definition:stable-coefficients}.
\end{Remark}

\begin{Lemma} \label{lemma:character-of-partition-sequence-multiset}
	Let $\underline{\lambda}$ be a partition sequence of length $r$. Then the characters of the truncated product monomial crystals $\cM(\GL_r, \bR(\underline{\lambda^i}), J(\underline{\lambda}^i))$ satisfy the following recurrence:
	\begin{enumerate}
		\item For $i = 0$, $\ch \cM(\bR(\underline{\lambda}^0), J(\underline{\lambda}^0)) = \ch \cM(\varnothing, J(\underline{\lambda}^0)) = 1$.
		\item For $i > 0$,
		\begin{align}
			\ch \cM(\bR(\underline{\lambda}^i), J(\underline{\lambda}^i))
			&= e^{\lambda^{(i)}} \cdot \ch \cM(\bR(\underline{\lambda}^{i-1}), J(\underline{\lambda}^i)) \\
			&= e^{\lambda^{(i)}} \cdot \pi_1 \cdots \pi_{i-1} \ch \cM(\bR(\underline{\lambda}^{i-1}), J(\underline{\lambda}^{i-1})).
		\end{align}
	\end{enumerate}
\end{Lemma}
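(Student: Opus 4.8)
The plan is to run an induction on $i$, feeding the three rules of \cref{theorem:inductive-character-formula} and doing the necessary bookkeeping on the upward-closed sets $J(\underline{\lambda}^i)$. The base case $i = 0$ is immediate: since $\bR(\underline{\lambda}^0) = \varnothing$, rule (1) gives $\ch \cM(\varnothing, J) = 1$ for every upward-closed $J$, in particular for $J(\underline{\lambda}^0)$. Throughout the inductive step one uses that $\Supp \bR(\underline{\lambda}^{i-1}) \subseteq J(\underline{\lambda}^{i-1}) \subseteq J(\underline{\lambda}^i)$, which is clear from \cref{definition:partition-sequence-to-multiset}.

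For the inductive step I would first pin down the set $J(\underline{\lambda}^i) \setminus J(\underline{\lambda}^{i-1})$. Unwinding \cref{definition:partition-sequence-to-multiset} one has $J(\underline{\lambda}^i) = J(\underline{\lambda}^0) \cup \up(\{(1, -2i+1)\})$, and a short computation of the columns of these sets shows that $J(\underline{\lambda}^i) \setminus J(\underline{\lambda}^{i-1})$ is exactly the maximal chain $(1, 1-2i) < (2, 2-2i) < \cdots < (i, -i)$, and that $(j, j-2i)$ is the unique boundary point $\partial_j J(\underline{\lambda}^i)$ in column $j$. Since by \cref{definition:partition-sequence-to-multiset} the multiset $\bQ := \bR(\underline{\lambda}^i) - \bR(\underline{\lambda}^{i-1})$ is supported on this difference and has weight $\lambda^{(i)}$, in particular $\Supp \bQ \subseteq \partial J(\underline{\lambda}^i)$, so rule (2) of \cref{theorem:inductive-character-formula} applies with $\bR = \bR(\underline{\lambda}^{i-1})$ and yields the first claimed equality $\ch \cM(\bR(\underline{\lambda}^i), J(\underline{\lambda}^i)) = e^{\lambda^{(i)}} \cdot \ch \cM(\bR(\underline{\lambda}^{i-1}), J(\underline{\lambda}^i))$.

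For the second equality I would pass from $J(\underline{\lambda}^{i-1})$ up to $J(\underline{\lambda}^i)$ by adjoining the points of that chain one at a time, in decreasing order $(i,-i), (i-1,-i-1), \ldots, (1, 1-2i)$, which is forced by the chain structure if each intermediate set is to remain upward-closed. Checking from the explicit column description that each partial union is indeed upward-closed and still contains $\Supp \bR(\underline{\lambda}^{i-1})$, one applies rule (3) at each stage: adjoining $(j, j-2i)$ contributes $\pi_j$, so altogether $\ch \cM(\bR(\underline{\lambda}^{i-1}), J(\underline{\lambda}^i)) = \pi_1 \pi_2 \cdots \pi_i \, \ch \cM(\bR(\underline{\lambda}^{i-1}), J(\underline{\lambda}^{i-1}))$. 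The remaining point is that the outermost operator $\pi_i$ acts as the identity on $\ch \cM(\bR(\underline{\lambda}^{i-1}), J(\underline{\lambda}^{i-1}))$: since $\bR(\underline{\lambda}^{i-1})$ is supported in columns $1, \ldots, i-1$ with its support in column $j$ at heights $\le -j$, \cref{lemma:fundamental-s-support} forces the $\bS$-support of any element of $\cM(\bR(\underline{\lambda}^{i-1}))$ to sit at heights $\leq -i-2$ in column $i$; those heights are excluded both by $J(\underline{\lambda}^{i-1})$ and by $J(\underline{\lambda}^{i-1}) \cup \{(i,-i)\}$, so adjoining $(i,-i)$ does not change the truncation as a set, hence $\pi_i$ acts trivially and the product collapses to $\pi_1 \cdots \pi_{i-1} \ch \cM(\bR(\underline{\lambda}^{i-1}), J(\underline{\lambda}^{i-1}))$, as claimed.

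The main obstacle is precisely this order-theoretic bookkeeping: computing the columns of each $J(\underline{\lambda}^i)$, verifying upward-closedness at every stage of the peeling, and locating the possible $\bS$-supports of $\cM(\bR(\underline{\lambda}^{i-1}))$. None of it is deep, but the apparent off-by-one --- $i$ new boundary points against $i-1$ Demazure operators in the statement --- is real, and is accounted for exactly by the column-$i$ point $(i,-i)$ being invisible to $\cM(\bR(\underline{\lambda}^{i-1}))$; getting that step right is the crux of the argument.
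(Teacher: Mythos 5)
Your proof is correct and fills in the details that the paper's one-line proof (a citation of \cref{theorem:inductive-character-formula}) leaves to the reader. The key nontrivial step you identify --- that adjoining the points of $J(\underline{\lambda}^i) \setminus J(\underline{\lambda}^{i-1}) = \{(j, j-2i) : 1 \le j \le i\}$ from the top down forces $\pi_1 \cdots \pi_i$, and that the outermost $\pi_i$ acts trivially because \cref{lemma:fundamental-s-support} confines the column-$i$ part of the $\bS$-support of any element of $\cM(\bR(\underline{\lambda}^{i-1}))$ to heights $\le -i-2$, strictly below both truncating sets --- correctly resolves a genuine off-by-one that the paper's proof does not mention, and your bound is accurate since $\bR(\underline{\lambda}^{i-1})$ lives in columns $j \le i-1$ at heights $\le -j$. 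One caveat worth recording in a final write-up: when $i = r$ the point $(i, -i)$ lies outside $I_r \ptimes \bbZ$, so $J(\underline{\lambda}^r) \setminus J(\underline{\lambda}^{r-1})$ already has only $i-1$ elements and the trivial step never occurs; the conclusion $\pi_1\cdots\pi_{i-1}$ is unchanged but this edge case should be separated from the $i < r$ argument.
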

\begin{proof}
	The case of $i = 0$ is clear. The inductive character formula given in \cref{theorem:inductive-character-formula} implies the inductive case.
\end{proof}

The recurrences appearing in \cref{lemma:character-of-partition-sequence-diagram} and \cref{lemma:character-of-partition-sequence-multiset} are identical, showing that for a partition sequence $\underline{\lambda}$ of length $r$, the character of the flagged Schur module $\scS_{D(\underline{\lambda})^\flag(V_\bullet)}$ is equal to the character of the truncated crystal $\cM(\GL_r, \bR(\underline{\lambda}), J(\underline{\lambda}))$.

\begin{Theorem}
	\label{theorem:crystal-of-schur-module}
	Let $\underline{\lambda}$ be a parittion sequence of length $r$, and let $\bR = \bR(\underline{\lambda})$ and $D = D(\underline{\lambda})$. Then the stable coefficients of $\bR$ and $D$ coincide, i.e. we have $c_\bR^\mu = c_D^\mu$ for all partitions $\mu$, and furthermore for any $n$ such that $\bR(\underline{\lambda})$ lives over $I_n$ (equivalently, $D(\underline{\lambda})$ has columns of length at most $n$), we have $\ch \cM(\GL_r, \bR) = \ch \scS_D(\bbC^n)$ and hence the monomial crystal $\cM(\GL_r, \bR)$ is the crystal of the generalised Schur module $\scS_D(\bbC^n)$.
\end{Theorem}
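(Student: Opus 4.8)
The plan is to route everything through the \emph{truncated} crystal $\cM(\GL_r, \bR, J)$ (for $\bR = \bR(\underline{\lambda})$, $J = J(\underline{\lambda})$) and the \emph{flagged} Schur module $\scS_{D}^\flag(V_\bullet)$ (for $D = D(\underline{\lambda})$), which are the two objects for which we already have matching explicit Demazure-type recurrences, and then to recover the statement about the full objects by applying the longest-element Demazure operator $\pi_{w_\circ}$ together with the stability results of \cref{section:stability}.

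First I would compare characters at the truncated/flagged level. Fixing $V$ of dimension $n \geq r$ with a full quotient flag $V_\bullet$, the recurrences of \cref{lemma:character-of-partition-sequence-diagram} and \cref{lemma:character-of-partition-sequence-multiset} are literally the same (identical base case, and identical inductive step: multiply by $e^{\lambda^{(i)}}$, then apply $\pi_1 \cdots \pi_{i-1}$), so an induction on $r$ gives
\begin{equation}
	\ch \scS_{D}^\flag(V_\bullet) = \ch \cM(\GL_r, \bR, J)
\end{equation}
inside $\bbZ[x_1, \ldots, x_r]$; the flagged Schur character involves only $x_1, \ldots, x_r$ since row $i$ of $D$ maps into $S^\bullet(V_i)$ and $D$ lies in rows $1$ through $r$.

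Next I would symmetrise by applying $\pi_{w_\circ}$ for $W = \fS_r$ to both sides. On the crystal side, \cref{corollary:demazure-stabilisation} turns $\ch \cM(\GL_r, \bR, J)$ into $\ch \cM(\GL_r, \bR)$, which --- since $\bR$ is stable for $\GL_r$ by \cref{lemma:inclusions-stabilise} --- equals $\sum_\mu c_\bR^\mu\, \ch \cB(\GL_r,\mu)$. On the Schur side, I would use the Reiner--Shimozono theory of flagged Schur modules of column-convex (hence percentage-avoiding) diagrams to get $\pi_{w_\circ} \ch \scS_D^\flag(V_\bullet) = \ch \scS_D(\bbC^r)$, which by Schur--Weyl duality (\cref{section:definition-generalised-schur}) equals $\sum_\mu c_D^\mu\, \ch \scS_\mu(\bbC^r)$, both sums running over $\mu$ with $\ell(\mu) \leq r$. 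Comparing the two expansions and using linear independence of the Schur polynomials $\ch \scS_\mu(\bbC^r)$ yields $c_\bR^\mu = c_D^\mu$ for all $\mu$ (both vanish for $\ell(\mu) > r$). Feeding this into \cref{corollary:monomial-restriction-rule} gives $\cM(\GL_n, \bR) \cong \bigoplus_{\ell(\mu)\leq n} \cB(\GL_n,\mu)^{\oplus c_D^\mu}$ for every $n$, which is precisely the crystal of $\scS_D(\bbC^n) \cong \bigoplus_{\ell(\mu)\leq n} \scS_\mu(\bbC^n)^{\oplus c_D^\mu}$; the stabilisation maps $\Psi$ then identify this with the statement for $\cM(\GL_r, \bR)$ whenever $\bR$ lives over $I_n$.

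The hard part will be the one input not already packaged in the earlier lemmas: the identity $\pi_{w_\circ} \ch \scS_D^\flag(V_\bullet) = \ch \scS_D(\bbC^r)$, i.e. that symmetrising the flagged character recovers the full Schur character. I expect to obtain this from the geometric picture of \cite{magyarBorelWeilTheorem1998, reinerPercentageAvoidingNorthwestShapes1998} --- the flagged module is the $B$-module of sections of a line bundle over a configuration variety, and pushing forward along the projection to the full flag variety, which on characters is exactly $\pi_{w_\circ}$, yields $\scS_D(\bbC^r)$. Equivalently, one can argue combinatorially that for column-convex $D$ the character $\ch \scS_D^\flag(V_\bullet)$ is a non-negative integer combination of key polynomials $\kappa_\nu$ with each $\nu$ having at most $r$ parts, that $\pi_{w_\circ}\kappa_\nu = \ch \scS_{\operatorname{sort}(\nu)}(\bbC^r)$, and that the resulting symmetric expansion is forced to be $\sum_\mu c_D^\mu \ch \scS_\mu(\bbC^r)$ by Schur--Weyl; this route also delivers the auxiliary fact $c_D^\mu = 0$ for $\ell(\mu) > r$ used above. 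A minor bookkeeping point is to confirm that the ``stable $n$'' of \cref{lemma:inclusions-stabilise} for $\bR(\underline{\lambda})$ is indeed at most $r$ (as asserted in the remark following \cref{example:partitions-sequence-to-multiset}), so that $\cM(\GL_r, \bR)$ already sees the stable decomposition.
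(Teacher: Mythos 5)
Your proposal is correct and follows essentially the same route as the paper: compare the identical recurrences at the truncated/flagged level, apply $\pi_{w_\circ}$ (using \cref{corollary:demazure-stabilisation} on the crystal side and the Reiner--Shimozono symmetrisation result on the Schur side), deduce $c_\bR^\mu = c_D^\mu$, then propagate to all $n$ via the two restriction rules. The ``hard part'' you flag --- the identity $\pi_{w_\circ} \ch \scS_D^\flag(V_\bullet) = \ch \scS_D(\bbC^r)$ --- is exactly what the paper imports as a black box from \cite[Theorem~21]{reinerPercentageAvoidingNorthwestShapes1998}, so you have correctly isolated the one external input and do not need to reprove it.
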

\begin{proof}
	Since the recurrences \cref{lemma:character-of-partition-sequence-diagram} and \cref{lemma:character-of-partition-sequence-multiset} are identical, letting $J = J(\underline{\lambda})$ we have the equality of characters $\ch \scS_D^\flag(V_\bullet) = \ch \cM(\GL_r, \bR, J)$. Applying the Demazure operator $\pi_{w_\circ}$ to both sides yields (by \cref{corollary:demazure-stabilisation} and \cite[Theorem~21]{reinerPercentageAvoidingNorthwestShapes1998}) the equality of characters $\ch \scS_D(\bbC^r) = \ch \cM(\GL_r, \bR)$. Since both $D$ and $\bR$ are stable for $\GL_r$ we have $c_\bR^\mu = c_D^\mu$ for all partitions $\mu$. Comparing the two restriction rules given in \cref{corollary:monomial-restriction-rule} and \cref{section:definition-generalised-schur} completes the proof.
\end{proof}

\begin{Remark}
	\cref{theorem:crystal-of-schur-module} applies to all finite multisets $\bR$ since we may assume that $\bR$ is contained in $\down(\{(1, -1)\})$ after performing a vertical shift on the whole of $\bR$, which does not change the isomorphism class of $\cM(\bR)$. We may always write the shifted $\bR$ as $\bR(\underline{\lambda})$ for some partition sequence $\underline{\lambda}$, then $\cM(\GL_n, \bR)$ is the crystal of $\scS_{D(\underline{\lambda})}(\bbC^n)$. Conversely, every column-convex diagram is $D(\underline{\lambda})$ for some $\lambda$, and hence \cref{theorem:crystal-of-schur-module} gives a positive combinatorial formula for $\ch \scS_D(\bbC^n)$ in terms of a sum over elements of the corresponding product monomial crystal.
\end{Remark}
\begin{Remark}
	By \cite{kamnitzerCategoryAffineGrassmannian2019}, the category $\cO$ of truncated shifted Yangians provides categorifications of $\fg$-modules whose associated crystal is the product monomial crystal. Hence, by \cref{theorem:crystal-of-schur-module}, we deduce that in type $A$ these are categorifications of generalised Schur modules, in the column-convex case. In particular, the truncated shifted Yangians produce (the first known) categorifications of skew Schur modules.
\end{Remark}

\printbibliography

\end{document}